\title{Galois covers of type $(p,\cdots,p)$, vanishing cycles formula, and  the existence of torsor structures.}
\author{Mohamed Sa\"\i di \& Nicholas Williams}
\date{} % Activate to display a given date or no date (if empty),
\newcommand{\defeq}{:=}
\newcommand{\Spec}{\mathrm{Spec}}
\newcommand{\chr}{\mathrm{char}}
\newtheorem*{corB}{Corollary 1.5}
\newtheorem*{corE}{Corollary 2.2.3}
\newtheorem*{proA}{Proposition 2.2.2}
\newtheorem*{proB}{Proposition 3.7}
\newtheorem*{defA}{Definition 1.3}
\newtheorem*{defB}{Definition 2.1.1}
\newtheorem*{lemA}{Lemma 1.4}
\newtheorem*{thmA}{Theorem 1.1}
\newtheorem*{thmB}{Theorem 1.2}
\newtheorem*{thmE}{Theorem 1.6}
\newtheorem*{thmF}{Theorem 2.1.2}
\newtheorem*{thmC}{Theorem 2.2.1}
\newtheorem*{thmD}{Theorem 3.2}
\newtheorem*{thmG}{Theorem 3.5}
\newtheorem*{thmH}{Theorem 3.4}
\newtheorem*{queA}{Question 3.1}
\newtheorem*{queB}{Question 3.3}
\begin{document}
\maketitle

\begin{abstract} In this article we prove a local Riemman-Hurwitz formula which compares the dimensions of the spaces of vanishing cycles in a finite Galois cover
of type $(p,p,\cdots,p)$ between formal germs of $p$-adic curves and which generalises the formula proven in [Sa\"\i di1] in the case of Galois covers of degree $p$. 
We also investigate the problem of the existence of a torsor structure for a finite Galois cover 
of type $(p,p,\cdots,p)$ between $p$-adic schemes.
\end{abstract}

\section*{\S 0. Introduction}
Let $K$ be a {\it complete discrete valuation ring of mixed characteristic}, $R$ its valuation ring, and
$k\defeq R/\pi R$ the residue field of characteristic $p>0$ which we assume to be {\it algebraically closed}. 
We suppose that $K$ contains a primitive $p$-th root of $1$.
In [Sa\"\i di1] the first author proved a local Riemman-Hurwitz formula which compares the dimensions of the spaces of vanishing cycles in a finite Galois cover of degree $p$ 
between formal germs of $R$-curves. This formula is quite explicit and involves the (usual) {\it \lq\lq generic\rq\rq different}, which measures the ramification at the level of generic fibres,
and  a certain {\it \lq\lq special\rq\rq different} which involves certain {\it \lq\lq conductors\rq\rq} attached to the induced covers between the formal boundaries of the formal germs 
(cf. loc. cit. Theorem 3.4).

In this paper we generalise this formula to the setting of Galois covers of type $(p,p,\cdots,p)$, i.e., with Galois group $\Bbb Z/p\Bbb Z\times \cdots\times \Bbb Z/p\Bbb Z$.
In principle one can apply the formula in the Galois degree $p$ case obtained in [Sa\"\i di1] iteratively to derive such a formula. However, the difficulty here lies in computing the {\it conductors involved in the special different at the various degree $p$ intermediate covers}; the possibility of having generically purely inseparable extensions at the level of special fibres doesn't allow the use of the standard ramification theory as in [Serre] in order to compute these conductors. In this paper we are able to compute in {\S 1} these conductors at the various degree $p$ intermediate levels via direct, rather tedious, computations (cf. Theorem 1.1). Although our main result computing these conductors is stated only in the case of Galois covers of type $(p,p)$ (cf. loc. cit.), it is quite straightforward to deduce from this result the relevant value conductors as well as the corresponding Riemman-Hurwitz formula in the case of general Galois covers of type $(p,p,\cdots,p)$ (cf. Example 1.7 for an illustration). In $\S2$ we derive an explicit local Riemman-Hurwitz formula which compares the dimensions of the spaces of vanishing cycles in a finite Galois covers
of type $(p,p)$ between formal germs of $R$-curves, which can be easily iterated to deduce a similar local Riemman-Hurwitz formula in the general case of finite Galois covers
of type $(p,p,\cdots,p)$.  

In $\S3$ we investigate the problem of the existence of a torsor structure for a finite Galois cover 
of type $(p,p,\cdots,p)$ between $R$-(formal) schemes where we allow $R$ to be of equal characteristic $p>0$. 
Let $X$ be a {\it normal flat and geometrically connected} $R$-(formal) scheme with an {\it integral special fibre} $X_k\defeq X\times _Rk$,
$\{f_i:Y_i\to X\}_{i=1}^n$ torsors under finite and flat $R$-group scheme $G_i$ which are generically pairwise disjoint, $1\le i\le n$, and $f:Y\to X$ the morphism of normalisation of $X$ in (the fibre product over $X_K\defeq X\times _RK$) $\prod _{i=1}^nY_{i,K}$, where $Y_{i,K}\defeq Y_i\times_RK$. Assume the special fibre $Y_k\defeq Y\times _Rk$ is {\it reduced}.
Our main result Theorem 3.4 gives necessary and sufficient conditions for $f$ to have the structure of a torsor under a finite and flat $R$-group scheme (necessarily isomorphic to 
$G_1\times_R \cdots\times_R G_n$). In the case where $X$ is a {\it relative curve} these conditions are equivalent to the condition that at least $n-1$ of the group schemes $G_i$ are \'etale (cf. Theorem 3.5).
This latter fact is false in relative dimension $>1$ (cf. 3.8). 

\section*{Notations} 

In this paper $p\ge 2$ is a {\it prime integer},
$K$ is (unless we specify otherwise) a {\it complete discrete valuation ring}, $\mathrm char (K)=0$, $R$ its valuation ring, $\pi$ a uniformising parameter, 
$v_K$ will denote the valuation of $K$ which is normalised by $v_K(\pi)=1$, and
$k\defeq R/\pi R$ the residue field of characteristic $p>0$ which we assume to be {\it algebraically closed}. We suppose $R$ contains a primitive $p$-th root of $1$.

For an $R$-scheme $X$ we will denote by
$X_K\defeq X\times _RK$ (resp. $X_k\defeq X\times _Rk$) the {\it generic} (resp. {\it special}) fibre of $X$.
If $X=\mathrm{Spf} A$ is a formal affine $R$-scheme we will denote $X_K\defeq \mathrm {Spec} (A\otimes_RK)$
and $X_k\defeq \mathrm {Spec}(A/\pi)$ the special fibre of $X$.
 
%(the generic fibre is understood in the rigid analytic sense in the case where $X$ is a formal scheme).
A {\it formal (resp. algebraic) $R$-curve} is an $R$-formal scheme of finite type (resp. scheme of finite type)
flat, separated, and whose special fibre is equidimensional of dimension $1$.

We will refer to a (generically separable) cover $Y\to X$ between normal connected (formal $R$-)schemes which is Galois with Galois group 
$\Bbb Z/p\Bbb Z\times \Bbb Z/p\Bbb Z$ as a Galois cover of type $(p,p)$.

Let $X$ be a {\it proper, normal, (formal) $R$-curve} with $X_k$ {\it geometrically reduced}.
For $x\in X$ a {\it closed point}
let $F_x=\mathrm {Spf} (\hat {O}_{X,x})$ be the {\it formal completion} of $X$ at $x$, which we will refer to as the {\it formal germ} of $X$ at $x$.
Thus, $\hat {O}_{X,x}$ is the completion of the local ring of (the algebraisation) of $X$ at $x$.
%Write $F_{x,K}\defeq \Spec (\hat {O}_{X,x}\otimes _RK)$.
%We will refer to $F_{x,K}$ as the {\it formal fibre} of $X_K$ at $x$. 
Let $\{P_i\}_{i=1}^n$ be the {\it minimal prime ideals of $\hat {O}_{X,x}$ which contain $\pi$}; they correspond 
to the {\it branches} $\{\eta_i\}_{i=1}^n$ of the completion of $X_k$ at $x$ (i.e., closed points of the normalisation of $X_k$ above $x$), 
and $X_i=X_{x,i}\defeq \mathrm {Spf} (\hat {O}_{x,P_i})$
the formal completion of the localisation of $F_x$ at $P_i$. The local ring $\hat {O}_{x,P_i}$ is a {\it complete 
discrete valuation ring} with uniformiser $\pi$.
We refer to $\{X_{i}\}_{i=1}^n$ as the set of {\it boundaries of the formal germ} $F_x$. 
We have a canonical morphism $X_{i}\to F_x$ of formal schemes, $1\le i\le n$.

With the same notations as above, let $x\in X$ be a {\it closed} point and $\widetilde X_k$ the {\it normalisation} of $X_k$. There is a one-to-one correspondence
between the set of points of $\widetilde X_k$ above $x$ and the set of boundaries of the formal germ at the point $x$. Let $x_i$ be the point
of $\widetilde X_k$ above $x$ which corresponds to the boundary $X_{i}$, $1\le i\le n$.
Then the completion of $\widetilde X_k$ at $x_i$ is isomorphic to the spectrum of a ring of formal power series $k[[t_i]]$ over $k$,
where $t_i$ is a {\it local parameter} at $x_i$.  
The complete local ring $\hat {O}_{x,P_i}$ is a discrete valuation ring with uniformiser $\pi$, and residue field isomorphic
to $k((t_i))$. Fix an isomorphism $ k((t_i))\simeq \hat {O}_{x,P_i}/\pi$.
Let $T_i\in \hat {O}_{x,P_i}$ be an element which lifts (the image in $\hat {O}_{x,P_i}/\pi$ under the above isomorphism of)
$t_i$; we shall refer to such an element $T_i$ as a {\it parameter} of 
$\hat {O}_{x,P_i}$, or of the boundary $X_{i}$.
Then there exists an isomorphism $R[[T_i]]\{T_i^{-1}\}\simeq    \hat {O}_{x,P_i}$, where  
$$R[[T]]\{T^{-1}\} \defeq \Big\{\sum _{i=-\infty}^{\infty}a_iT^i,\ \underset 
{i\to -\infty} {\mathrm lim} \vert a_i \vert=0 \Big\}$$ 
and $\vert \ \vert$ is a normalised absolute value of $R$ (cf. [Bourbaki], $\S2$, 5).

Given a power series $g\in k((z))$ where $z$ is an indeterminate we write 
$$g(z)=\sum _{i\in I\subset \Bbb Z} a_iz^i+\text{higher order terms},$$
meaning all remaining monomial terms in $z$ are of the form $cz^t$
where $c\in k$ and $t>i$ for at least one $0\neq i\in I$.  Also given a power series $H(Z)\in R[[T]]\{T^{-1}\}$
we write 
$$H(Z)=(F(Z))^p+\sum _{i\in I\subset \Bbb Z} c_iZ^i+\text{higher order terms},$$
meaning all remaining monomial terms in $Z$ are of the form $dZ^t$
where either $v_K(d)>v_K(c_i)$ for all $i\in I$ or there exists at least one $i\in I$
such that $v_K(d)=v_K(c_i)$ and $t>i$.

\section*{Background}
In this section we collect/improve some background material form [Sa\"\i di1] that will be used in this paper.
Let $A\defeq R[[T]]\{T^{-1}\}$ and $f : \mathrm{Spf} \left(B \right) \rightarrow \mathrm{Spf} \left( A \right) $ a non-trivial Galois cover of degree $p$. 
We assume that $\pi$ (which is a uniformiser of $A$) is a unifomiser of $B$ (this condition is satisfied after possibly base changing to a finite extension of $R$, cf. [Epp]). Proposition 2.3 in [Saidi1] shows that $f$ has the structure of a torsor under one of the three group schemes, $\mu_p$, $\mathcal{H}_n$ where $0 < n < v_K(\lambda)$, or $\mathcal{H}_{ v_K(\lambda)}$ (cf. loc. cit. 2.1 for the definition of these group schemes and the local explicit description of torsors under these group schemes). 
To the torsor $f$ are associated some data: the acting group scheme as above, the \textbf{degree of different} $\delta$, the \textbf{conductor variable} m,
and $c=-m$ the \textbf{conductor}
(cf. loc. cit. definition 2.4. The notation $c$ is introduced in this paper, only the conductor variable $m$ was considered in loc. cit.). 
Adapting slightly the proof of Proposition 2.3 in [Sa\"\i di1] provides the following details in the three occurring cases:

\textbf{(a)} For the \textbf{group scheme} $\bm{ \mu_p}$ where $\delta=v_K(p)$, the torsor equation is of the form \[Z^p= u\] where $u = \sum_{i \in \mathbb{Z}} a_i T^i\in A^{\times}$ 
is a unit such that its image $\bar u$ modulo $\pi$ is not a $p$-power. 
On the level of special fibres the  induced $\mu_p$-torsor is given by an equation $z^p= \bar{u}$ where $\bar{u} =  \sum_{i \geq l} \bar{a}_i t^i \in k((t))$ for some integer $l$ with $\bar{a}_l \not= 0$
(here $t$ equals $T$ modulo $\pi$). 
%and which is defined up to multiplication by $p$-power.  
There are two cases to consider:

\textbf{(a1)} $\mathrm{gcd}(l,p)=1$. We have $\bar u=t^l(\sum _{i \geq l} \bar{a}_i t^{i-l})$
and $\bar v\defeq \sum_{i \geq l} \bar{a}_i t^{i-l} \in k[[t]]$ is a unit. Further, we can write $u=T^lv$ where $v\defeq \sum_{i \geq l} {a}_i T^{i-l} 
\in R[[T]]\{T^{-1}\}$ is a unit whose reduction modulo $\pi$ equals $\bar v$.
After possibly multiplying $u$ by a $p$-power we can assume $0\le l<p$. The unit $v\in A^{\times}$ admits an $l$-th root $s\in A$ since $l$ is coprime to $p$ and $k$ is algebraically closed.
Thus, $s^l=v$ in $A$ and after replacing the parameter $T$ by $T'\defeq T.s$, which is also a parameter of $A$, our $\mu_p$-torsor $f : \mathrm{Spf} \left(B \right) \rightarrow \mathrm{Spf} 
\left( A \right)$ is defined by the equation $Z^p=(T')^l$.

\textbf{(a2)} $\mathrm{gcd}(l,p)>1$, in which case $l$ is divisible by $p$ 
%so that $l=\alpha p^n$ for some $\alpha\in \mathbb{Z}$, $n\ge 1$, that $\mathrm{gcd}(\alpha,p)=1$, 
and $\bar u =  \sum_{i \geq l} \bar{a}_i t^i$. 
After multiplying $u$ by $T^{-l}$ (which is a $p$-power) we can assume  $\bar u =  \sum_{i \geq l} \bar{a}_i t^{i-l}=\sum_ {j\geq 0}\bar a_jt^j\in k[[t]]$. 
Let $m := \mathrm{min} \{ i\ |\ v_K(a_i) = 0 , \mathrm{gcd}(i-l,p)=1 \} = \mathrm{min} \{ j \ |\ \mathrm{gcd}(j,p)=1 \}$. 
We can write $\bar u=\bar a_0+\bar a_1t^p+\cdots+\bar a_{[m/p]}t^{[m/p]p}+\bar a_mt^m+\text{higher order terms},$
and $u=a_0+a_1T^p+\cdots+a_{[m/p]}T^{[m/p]p}+\sum_{\substack{ v_K(a_i) = 0 \\ i \geq m }} a_i T^i +  \sum_{v_K(a_i) > 0} a_i T^i.$
If $a\in A$ is a unit we can write $a=b^p+c$ with $b\in A$ a unit and $v_K(c)>0$. Thus, we can assume without loss of generality that
$u=a_0^p+a_1^pT^p+\cdots+a_{[m/p]}^pT^{[m/p]p}+\sum_{\substack{ v_K(a_i) = 0 \\ i \geq m }} a_i T^i +  \sum_{v_K(a_i) > 0} a_i T^i.$
Now $a_0^p+a_1^pT^p+\cdots+a_{[m/p]}^pT^{[m/p]p}=(a_0+a_1T+\cdots+a_{[m/p]}T^{[m/p]})^p-p(a_0+a_1T+\cdots+a_{[m/p]}T^{[m/p]})+\text {higher order terms}$,
and after replacing $u$ by $u(a_0+a_1T+\cdots+a_{[m/p]}T^{[m/p]})^{-p}$ we can assume without loss of generality that the torsor equation is:
$Z^p = 1 + a_mT^m + \sum_{\substack{ v_K(a_i) = 0 \\ i > m }} a_i T^i +  \sum_{v_K(a_i) > 0} a_i T^i $.
Further, 
$a_mT^m + \sum_{\substack{ v_K(a_i) = 0 \\ i > m }} a_i T^i +  \sum_{v_K(a_i) > 0} a_i T^i=T^mv$
where $v=a_m + \sum_{\substack{ v_K(a_i) = 0 \\ i > m }} a_i T^{i-m} +  \sum_{v_K(a_i) > 0} a_i T^{i-m}\in A$ is a unit which admits an $m$-th root $u\in A$. 
Thus, $u^m=v$ in $A$ and after replacing the parameter $T$ by $T'\defeq T.u$, which is also a parameter of $A$, our $\mu_p$-torsor $f : \mathrm{Spf} \left(B \right) \rightarrow \mathrm{Spf} 
\left( A \right)$ is defined by the equation $Z^p=1+(T')^m$.

\textbf{Simplified form:} After a possible change of the parameter $T$ of $A$, the torsor equation $Z^p= u$ can be reduced to either the form \[\text{\textbf{(a1)}} \ \ \ Z^p = T^h \ \text {where}\ h \in \mathbb{F}_p^{\times},\] or of the form \[\text{\textbf{(a2)}} \ \ \ Z^p = 1 + T^m \] where $m$ is as defined above for these two cases. The conductor is given in both cases by
$\bf {(a1)} \ \ c=0$, and $\bf {(a2)} \ \ c=-m$.

\begin{center}
\line(1,0){250}
\end{center}

\textbf{(b)} For the \textbf{group scheme} $\bm{\mathcal{H}_n}$  where $0 < n < v_K(\lambda)$ and $\delta=v_K(p)-n(p-1)$, the torsor equation is of the form \[(1+ \pi^{n} Z)^p=1+ \pi^{np} u \] where $u = \sum_{i \in \mathbb{Z}} a_i T^i\in A^{\times}$  is a unit such that modulo $\pi$ it is not a $p$-power. 
Reducing modulo $\pi$, on the special fibre the acting group scheme is $\alpha_p$ and the torsor is given by an equation $z^p= \bar{u}$ where $\bar{u} =  \sum_{i \geq l} \bar{a}_i t^i \in k((t))$ for some integer $l$ with $\bar{a}_l \not= 0$ and which is defined up to addition of a $p$-power. 
We define $m := \mathrm{min} \{ i | v_K(a_i) = 0 , \mathrm{gcd}(i,p)=1 \} \in \mathbb{Z} .$
Then
$\bar u=\bar a_{l/p}t^{p.l/p}+\cdots+\bar a_{[m/p]}t^{[m/p]p}+\bar a_mt^m+\text{higher order terms},$
and $u=a_{l/p}T^{p.l/p}+\cdots+a_{[m/p]}T^{[m/p]p}+\sum_{\substack{ v_K(a_i) = 0 \\ i \geq m }} a_i T^i +  \sum_{v_K(a_i) > 0} a_i T^i.$
If $a\in A$ is a unit we can write $a=b^p+c$ with $b\in A$ a unit and $v_K(c)>0$. Thus, we can assume without loss of generality that
$u=a_{l/p}^pT^{p.l/p}+\cdots+a_{[m/p]}^pT^{[m/p]p}+\sum_{\substack{ v_K(a_i) = 0 \\ i \geq m }} a_i T^i +  \sum_{v_K(a_i) > 0} a_i T^i.$
Now $1+\pi^{np}(a_{l/p}^pT^{p.l/p}+\cdots+a_{[m/p]}^pT^{[m/p]p})=(1+\pi^n(a_{l/p}T^{l/p}+\cdots+a_{[m/p]}T^{[m/p]}))^p-p\pi^n(a_{l/p}T^{l/p}+\cdots+a_{[m/p]}T^{[m/p]})+\text {higher order terms}$. Thus, since
$np<v_K(p)+n$, the torsor equation can be written
 $(1+ \pi^{n} Z)^p=(1+\pi^n(a_{l/p}T^{l/p}+\cdots+a_{[m/p]}T^{[m/p]}))^p+\pi^{np}(\sum_{\substack{ v_K(a_i) = 0 \\ i \geq m }} a_i T^i +  \sum_{v_K(a_i) > 0} a_i T^i)$
and after multiplying by $(1+\pi^n(a_{l/p}T^{l/p}+\cdots+a_{[m/p]}T^{[m/p]}))^{-p}=1-p\pi^n(a_lT^{l/p}+\cdots+a_{[m/p]}T^{[m/p]})+\text {higher order terms}$, we get an equation
$(1+ \pi^{n} Z)^p=1+\pi^{np}(\sum_{\substack{ v_K(a_i) = 0 \\ i \geq m }} a_i T^i +  \sum_{v_K(a_i) > 0} a_i T^i)$
and can assume
$u =  \sum_{\substack{v_K(a_i) = 0 \\ i \geq m}} a_i T^i +  \sum_{v_K(a_i) > 0} a_i T^i.$
 Further, $a_mT^m + \sum_{\substack{ v_K(a_i) = 0 \\ i > m }} a_i T^i +  \sum_{v_K(a_i) > 0} a_i T^i=T^mv$ where $v\in A$ is a unit which admits an $m$-th root $h\in A$. 
Thus, $h^m=v$ in $A$ and after replacing the parameter $T$ by $T'\defeq T.h$, which is also a parameter of $A$, the
$\mathcal{H}_n$-torsor $f : \mathrm{Spf} \left(B \right) \rightarrow \mathrm{Spf} 
\left( A \right)$ is defined by the equation $Z^p=1+\pi^{np}(T')^m$. 

%Then one can assume  \[ u =  \sum_{\substack{v_K(a_i) = 0 \\ i \geq m}} a_i T^i +  \sum_{v_K(a_i) > 0} a_i T^i \]
\textbf{Simplified form:} After a change of the parameter $T$, the torsor equation can be reduced to the form \[Z^p=1+ \pi^{np} T^m\] where $m$ is as defined above. The conductor is given by
$c=-m$.

\begin{center}
\line(1,0){250}
\end{center}

\textbf{(c)} For the \textbf{group scheme} $\bm{\mathcal{H}_ {v_K(\lambda)}}$  where $\delta=0$, the torsor equation is of the form \[(1+ \lambda Z)^p=1+ \lambda^{p} u \] where $u = \sum_{i \in \mathbb{Z}} a_i T^i\in A^{\times}$ is a unit. On the special fibre the acting group scheme is $\mathbb{Z}/p\mathbb{Z}$ and the torsor is given by an equation $z^p-z=\bar{u}$ where $\bar{u} =  \sum_{i \geq l} \bar{a}_i t^i$ for some integer $l$ with $\bar{a}_l \not= 0$ and which is defined up to addition of an Artin-Schreier element of the form $b^p - b$.
In fact, after such an Artin-Schreier transformation, $\bar{u}$ can be represented as: 
$\bar{u} = \bar{a}_m t^m + \bar{a}_{m+1} t^{m+1} + ... +  \bar{a}_{-1} t^{-1} = \sum_{i=m}^{-1} \bar{a}_i t^i $
where $\bar{a}_m \not=0$ and $m < 0$ is the conductor variable such that $\mathrm{gcd}(m,p)=1$. %and $v_K(a_m)=0$. 
Indeed, for $f(t)=\sum_{i\ge 0}a_it^i\in k[[t]]$ we have $f(t)=(f(t)+f(t)^p+f(t)^{p^2}+\cdots)-(f(t)+f(t)^p+f(t)^{p^2}+\cdots)^p$.
Moreover, $\bar{u} = \bar{a}_m t^m + \bar{a}_{m+1} t^{m+1} + ... +  \bar{a}_{-1} t^{-1} = t^m \bar v$ where $\bar v=\bar{a}_m + \bar{a}_{m+1} t+ ... +  \bar{a}_{-1} t^{-m-1}\in k[[t]]$ is a unit.
Let $v={a}_m + {a}_{m+1} T+ ... +  {a}_{-1} T^{-m-1}\in R[[T]]$ be an element which lifts $\bar v$ and $h$ an $m$-th root of $v$ in $R[[T]]$.
Then after replacing the parameter $T$ by $T'\defeq T.h$, which is also a parameter of $A$, our 
$\mathcal{H}_{v_K(\lambda)}$-torsor $f : \mathrm{Spf} \left(B \right) \rightarrow \mathrm{Spf} 
\left( A \right)$ is defined by the equation $Z^p=1+\lambda^{p}(T')^m$.

\textbf{Simplified form:} After a change of the parameter $T$, the torsor equation over $R$ can be simplified to the form \[Z^p=1+ \lambda^{p} T^m\] where $m$ is as defined above. The conductor is given by $c=-m$.

\begin{center}
\line(1,0){250}
\end{center}

%The condition $\mathrm{gcd}(m,p)=1$ implies that the conductor variable $m$ is never a multiple of $p$. The condition that $v_K(a_i) = 0$ means $\bar{a}_i \not= 0$ in the residue field $k$. Finally, note that the description given for an $\mathcal{H}_n$ torsor $(1+\pi^n Z)^p=1+\pi^{\emph{\textbf{n}}p}T^m$ and its degree of different $\delta=v_K(p)-n(p-1)$ actually hold in the other two cases by substituting their corresponding $n$ value. For $\mathcal{H}_{ v_K(\lambda)}$, we have that $n=v_K(\lambda)$ so that
%\[(1+ \pi^{v_K(\lambda)} Z)^p =1+\pi^{pv_K(\lambda)}T^m  \Leftrightarrow (1+ \lambda Z)^p=1+\lambda^p T^m,\]
%where strictly speaking $\pi^{v(\lambda^p)}$ equals $\lambda^p$ multiplied by a unit, and
%\[\delta=v_K(p)-n(p-1)=v_K(p)-v_K(\lambda)(p-1)=0.\]
%Likewise for $\mu_p$,  we take $n=0$ so that
%\[(1+ \pi^{0} Z)^p =1+\pi^{0}T^m  \Leftrightarrow (1+ Z)^p=1+T^m, \]
%which we recognise, after a change of variables, as the $\mu_p$ (a2) simplified form torsor equation, and
%\[\delta=v_K(p)-n(p-1)=v_K(p).\]
%This observation will play a role in Theorem 1.

\section*{\S 1. The type $(p,p)$ case}
In this section $A\defeq R[[T]]\{ T^{-1} \}$ and $X_b \defeq \mathrm{Spf} \left( A \right)$. 
Let $f_{i, K} : (X_{i,b})_K \rightarrow (X_b)_K$ be two (generically) disjoint non-trivial degree $p$ Galois covers. 
%where $(X_b)_K$ denotes the generic fibre of $X_b$ for $i=1,2$. 
We have the following diagram:
\begin{equation*}
 \xymatrix{
&& (Y_b)_K \defeq (X_{1,b})_K \times_{(X_b)_K} (X_{2,b})_K \ar@{->}[ddll]_{G'_{1,K}} \ar@{->}[ddrr]^{G'_{2,K}} \ar@{->}[dddd]_{G_{1,K} \times G_{2,K}} \\
\\
(X_{1,b})_K &&&& (X_{2,b})_K\\
\\
&& (X_b)_K   \ar@{<-}[uull]^{G_{1,K}} \ar@{<-}[uurr]_{G_{2,K}}
}
\end{equation*}
where $G_{i,K}$ and $G'_{i,K}$ are the acting group schemes on the various covers and as $\mathrm{char}(K)=0$, we have $G_{i,K} = G'_{i,K}\simeq \mathbb{Z}/p\mathbb{Z} \simeq \mu_p$  is \'etale for $i=1,2$.
For $i=1,2$, let $f_i :  X_{i,b} \rightarrow X_b$ be the Galois covers of degree $p$ where $X_{i,b}$ is the {\it normalisation} of $X_b$ in $(X_{i,b})_K$. 
%Note that $X_{i,b}$ is isomorphic to $\mathrm{Spf} \left( R[[T_i]]\{ T_i^{-1} \} \right)$ for $1\le i \le2$. 
Similarly, let $Y_b$ be the {\it normalisation} 
of $X_b$ in $\left( Y_{b} \right)_K$ so that $f : Y_b \rightarrow X_b$ is a non-trivial Galois cover of type $(p,p)$. We assume that $(Y_b)_k$ is {\it reduced}. 
Note that in this case $X_{i,b}$ is isomorphic to $\mathrm{Spf} \left( R[[T_i]]\{ T_i^{-1} \} \right)$ for $1\le i \le2$ (cf. [Bourbaki], \S2, 5]). 
We have the diagram: 
\begin{equation*}
 \xymatrix{
&& Y_b =({ X_{1,b} \times_{X_b} X_{2,b} })^{nor}  \ar@{->}[dddrr]_{c_2'}^{G'_2}  \ar@{->}[dddll]^{c_1'}_{G'_1}   \ar@{->}[dd] \\
\\
&& X_{1,b} \times_{X_b} X_{2,b}  \ar@{->}[dll]_{G_2} \ar@{->}[drr]^{G_1}  \ar@{->}[ddd]  \\
X_{1,b} = \mathrm{Spf} ( A_1 )  \ar@{->}[ddrr]^{c_1}_{G_1}  &&&& X_{2,b} = \mathrm{Spf} ( A_2 ) \ar@{->}[ddll]_{c_2}^{G_2} \\
\\
&& X_b= \mathrm{Spf} ( A  )
}
\end{equation*}
where:
\begin{itemize}
\item $Y_b$ and $X_{i,b}$ are normal for $i=1,2$, and $Y_b$ is the normalisation $({X_{1,b} \times_{X_b} X_{2,b} })^{nor}$ of the fibre product $({X_{1,b} \times_{X_b} X_{2,b} })$.
\item $c_i$ (respectively $c_i'$) denotes the conductor of the torsor $X_{i,b} \rightarrow X_b$ (respectively $Y_b  \rightarrow X_{i,b}$). The conductor $c_i$  (respectively $c_i'$) is dependent on the conductor variable $m_i$ (respectively $m_i'$) (cf. Background).
\item $ G_i$ (respectively $G'_i$) denotes the finite and flat (commutative) $R$-group scheme of the torsor $X_{i,b} \rightarrow X_b$ (respectively $Y_b  \rightarrow X_{i,b}$). We know $G_i$, $G'_i$ are among the $R$-group schemes $\mathcal{H}_{v_K(\lambda)}$, $\mu_p$, or $\mathcal{H}_{n}$ for $0 < n < v_K(\lambda)$ (cf. loc. cit.).
\end{itemize}

On the level of special fibres over $k$ we have a diagram:
\begin{equation*}
 \xymatrix{
&& (Y_b)_k  \ar@{->}[dddrr]^{G'_{2,k}}  \ar@{->}[dddll]_{G'_{1,k}}   \ar@{->}[dd] \\
\\
&& (X_{1,b})_k \times_{(X_b)_k}  (X_{2,b})_k \ar@{->}[dll]_{G_{2,k}} \ar@{->}[drr]^{G_{1,k}}  \ar@{->}[ddd]  \\
(X_{1,b})_k\simeq \mathrm{Spec}\ k((t_1)) \ar@{->}[ddrr]_{G_{1,k}}  &&&& (X_{2,b})_k\simeq \mathrm{Spec} \ k((t_2)) \ar@{->}[ddll]^{G_{2,k}} \\
\\
&& (X_b)_k =  \mathrm{Spec} \ k((t))  
}
\end{equation*}
where:
\begin{itemize}
\item $(Y_b)_k$ and $(X_{i,b})_k$ are reduced for $i=1,2$.
\item  $k((t)) =A / ( \pi )$ (respectively $A_1/\pi\simeq k((t_1))$, $A_2/\pi \simeq k((t_2))$)
where $t$ (respectively $t_1$ and $t_2$) is the reduction modulo $\pi$ of $T$ (respectively $T_1$, $T_2$, where $T_i$ is some suitable parameter of $X_{i,b}$ for $i=1,2$).
\item $G_{i,k}=G_i \times_R k$ and $G'_{i,k}=G'_i \times_R k$ are the acting group schemes over $k$, a field with characteristic $p$, so that these group schemes are necessarily isomorphic to either $\mathbb{Z}/p\mathbb{Z}$, $\mu_p$ or $\alpha_p$.
\end{itemize}

We aim to express the \textbf{conductor} $c'_1$ in terms of $c_1$ and $c_2$ for the various torsor combinations and likewise for $c'_2$. To achieve this, we express the \textbf{conductor variables} $m_1'$ and $m'_2$ in terms of $m_1$ and $m_2$. We have six cases to consider by taking all possible pairs of the group schemes $\mathcal{H}_{v_K(\lambda)}$, $\mu_p$ and $\mathcal{H}_{n}$ over $R$ acting on $X_{1,b}$, $X_{2,b}$. 
The following is one of our main results. %Several key results will follow from this theorem.

\begin{thmA} Let $X_b = \mathrm{Spf} \left( A \right)$ and suppose we have two (generically) disjoint non-trivial degree $p$ Galois covers $f_{i, K} : (X_{i,b})_K \rightarrow (X_b)_K$, 
for $i=1,2$. Let $(Y_b)_K$ be the compositum of these covers. 

For $i=1,2$, let $f_i :  X_{i,b} \rightarrow X_b$ be the Galois covers of degree $p$ where $X_{i,b}$ is the normalisation of $X_b$ in $(X_{i,b})_K$.
%These are non-trivial torsors under finite flat $R$-group schemes $G_i$ of rank $p$ above the formal boundary $X_b$ which are generically disjoint with conductor variable $m_i$ for $i=1,2$. 
Set $Y_b$ as the normalisation of $X_b$ in $\left( Y_{b} \right)_K$ so that $f : Y_b \rightarrow X_b$ is a non-trivial Galois cover of type $(p,p)$.  We assume that the ramification index of the corresponding extension of DVR's equals 1 and that the special fibre of $Y_b$ is reduced.
Thus, $f_i$ is a non-trivial torsor under a finite flat $R$-group scheme $G_i$ of rank $p$ with conductor variable $m_i$ for $i=1,2$. 
Let $m'_i$ denote the conductor variable of the torsor $Y_b \rightarrow X_{i,b}$. Then, for all possible pairs of $G_1$ and $G_2$, we can express the conductors $m'_i$ in terms of the $m_i$ conductor variables for $i=1,2$ as follows:

\begin{equation*}
 \xymatrix{
&& Y_b \ar@{->}[ddll]^{ m_1'}  \ar@{->}[ddrr]_{m_2'} \\
\\
X_{1,b} &&&& X_{2,b} \\
\\
&& X_b  \ar@{<-}[uull]_{m_1} \ar@{<-}[uurr]^{m_2}
}
\end{equation*}

\begin{enumerate}
\item For $G_1 = G_2= \mathcal{H}_{v_K(\lambda)}$ we have that $m'_1=m_2$ and $m'_2=m_1p-m_2(p-1)$ when $m_1 \leq m_2$, and $m'_1=m_2p-m_1(p-1)$ and $m'_2=m_1$ when $m_1 > m_2$.
\item For $G_1 = \mathcal{H}_{v_K(\lambda)} $ and $G_2=\mu_p$ we have that $m'_1=m_2p-m_1(p-1)$ and $m'_2=m_1$.
\item For $G_1 =\mathcal{H}_{v_K(\lambda)} $ and $G_2=\mathcal{H}_{n}$ we have that $m'_1=m_2p-m_1(p-1)$ and $m'_2=m_1$.
\item For $G_1 =\mathcal{H}_{n}$ and $G_2=\mu_p$ we have that $m'_1=m_2p-m_1(p-1)$ and $m'_2=m_1$.
\item For $G_1 =G_2=\mu_p $ we have that $m'_1=m_2$ and $m'_2=m_1p-m_2(p-1)$ when $m_1 \leq m_2$, and $m'_1=m_2p-m_1(p-1)$ and $m'_2=m_1 $ when $m_1 > m_2$. In this case these results are only valid when at least one of $m_1$ and $m_2$ is non-zero (cf. Proof and Remark 1.8).  
\item For $G_1 =\mathcal{H}_{n_1}$ and $G_2=\mathcal{H}_{n_2}$ we have that $m'_1 = m_2$ and $m'_2 = m_1p-m_2(p-1)$ when $n_1 < n_2$, that $m'_1 = m_2p-m_1(p-1)$ and $m'_2 = m_1$ when $n_1 > n_2$, that $m'_1 = m_2$ and $m'_2 = m_1p-m_2(p-1)$ when both $n_1 = n_2$ and $m_1 < m_2$, and $m'_1 =m_2p-m_1(p-1) $ and $m'_2 =m_1 $ when both $n_1 = n_2$ and $m_1 \geq m_2$, where $0 < n, n_1, n_2 < v_K(\lambda)$.
\end{enumerate}

\end{thmA}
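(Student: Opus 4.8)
The plan is to reduce everything to explicit torsor equations via the ``simplified forms'' recorded in the Background section, then to compute the normalisation $Y_b$ of $X_b$ in the compositum $(Y_b)_K$ by an explicit change of parameter, and finally to read off the conductor variables $m_1'$ and $m_2'$ of the two induced degree $p$ covers $Y_b\to X_{i,b}$ from the resulting equations. Concretely, for each of the six pairs of group schemes I would start from the two torsor equations over $A=R[[T]]\{T^{-1}\}$ in their simplified forms: e.g.\ in case (1) both are $\mathcal H_{v_K(\lambda)}$-torsors, so $f_1$ is $Z_1^p=1+\lambda^p T^{m_1}$ and $f_2$ is $Z_2^p=1+\lambda^p T^{m_2}$ (after a common parameter change is arranged — one subtlety being that a single parameter $T$ must be chosen to simplify \emph{both} equations simultaneously, which is possible because the second simplification only involves multiplying $T$ by a unit that is an $m$-th root, and one must track how that affects the first equation; this bookkeeping is routine but is where care is needed). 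The compositum is generated over $(X_b)_K$ by $Z_1$ and $Z_2$, hence also by $Z_1$ and $Z_1 Z_2^{-j}$ or $Z_2 Z_1^{-j}$ for suitable $j\in\{1,\dots,p-1\}$, and the cover $Y_b\to X_{1,b}$ is the degree $p$ subcover obtained by adjoining (a suitable normalisation of) $Z_2$ to $A_1$.

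The key computational step is: over $A_1=X_{1,b}$, regard $1+\lambda^p T^{m_2}$ modulo the relation $Z_1^p=1+\lambda^p T^{m_1}$, and determine the valuation-theoretic ``leading behaviour'' of this element in $A_1$ — in particular identify whether, after multiplying by a $p$-th power in $A_1^\times$ (or performing an Artin--Schreier/$\alpha_p$-type reduction on the special fibre, according to which of $\mu_p$, $\alpha_p$, $\mathbb Z/p\mathbb Z$ the reduced group scheme $G_{2,k}$ is), one lands in a simplified torsor equation over $A_1$, and what the new conductor variable is. The parameter $T_1$ of $A_1$ is related to $T$ by $T=T_1^{?}\cdot(\text{unit})$: when $m_1\le m_2$ (or $n_1<n_2$, etc.) the cover $f_1$ is ``less ramified'' so $T$ remains (up to a unit) a parameter of $X_{1,b}$ and $m_1'$ comes out essentially as $m_2$; when $m_1>m_2$ the two exchange roles and one instead writes $T_1$ for a parameter downstairs and expands $1+\lambda^p T^{m_1}\equiv Z_1^p$ to extract the $m$-th root, producing the asymmetric formula $m_2p-m_1(p-1)$. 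The appearance of the combination $m_i p - m_j(p-1)$ is exactly the arithmetic of ``the different of the compositum minus the different already accounted for'', i.e.\ the degree $p$ analogue of the tower formula $\delta_{Y/X}=\delta_{Y/X_i}+p\,\delta_{X_i/X}$ applied to the special different, which one can cross-check against [Sa\"\i di1, Theorem 3.4] and against the $v_K$-valuation computation of the generic different.

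I would organise the proof as six cases (1)--(6), in each case: (i) write down both simplified torsor equations over a common parameter $T$; (ii) pass to the extension $A_1$ and rewrite the second equation modulo the first, reducing it to a simplified form over $A_1$ by the same manipulations used in the Background (killing $p$-th powers, and for the $\mathcal H_{v_K(\lambda)}$/$\mathbb Z/p$ case performing the Artin--Schreier normalisation $f\mapsto f-f^p+\cdots$ on the reduction, for the $\mathcal H_n$/$\alpha_p$ case killing $p$-th powers mod $\pi$, for the $\mu_p$ case extracting an $h$-th root); (iii) read off $m_1'$; (iv) do the symmetric computation for $m_2'$; (v) verify consistency with the ramification-index-one hypothesis and with the requirement that $(Y_b)_k$ be reduced (this is what forces, in case (5), the exclusion of $m_1=m_2=0$ — see Remark 1.8). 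The main obstacle I anticipate is precisely step (ii) in the mixed cases (2)--(4) and in case (6) with $n_1=n_2$: there the two torsors are governed by genuinely different group schemes, so after base change to $A_1$ one must figure out which group scheme governs $Y_b\to X_{1,b}$ (it need not be $G_2$), and one cannot appeal to classical ramification theory à la [Serre] because the residue extension $(X_{1,b})_k\to(X_b)_k$ may be inseparable; the whole point of Theorem 1.1 is that these conductors must instead be extracted by the direct, somewhat tedious expansion of the torsor equation, carefully tracking which monomial $c_i T^i$ first achieves the minimal $v_K$-value, in the sense of the ``higher order terms'' convention fixed at the end of the Notations section.
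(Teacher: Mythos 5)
Your overall strategy --- write both torsor equations explicitly over $A=R[[T]]\{T^{-1}\}$, base change the equation of $f_2$ to $X_{1,b}$, renormalise it into a simplified torsor form over $A_1$ by killing $p$-th powers (Kummer or Artin--Schreier, according to the reduction of the acting group scheme), and read off $m_1'$ as the minimal admissible exponent, then argue symmetrically for $m_2'$ --- is exactly the strategy of the paper's proof, down to the ``higher order terms'' valuation convention and the role of Remark 1.8 in excluding $m_1=m_2=0$ in case (5). But there is one concrete point where your set-up, as stated, does not work: you cannot in general choose a single parameter $T$ putting \emph{both} units $u_1,u_2$ simultaneously into simplified monomial form. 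The substitution $T\mapsto T\cdot h$ with $h^{m_1}$ equal to the unit part of $u_1$, which reduces $u_1$ to $T^{m_1}$, will generically turn $u_2$ back into a full power series, and vice versa; your own caveat that one ``must track how that affects the first equation'' concedes this and is inconsistent with then writing $f_2$ as $Z_2^p=1+\lambda^pT^{m_2}$. The paper resolves this by doing \emph{two} separate computations in each of the six cases: to get $m_1'$ it simplifies $u_1$ only and keeps $u_2=\sum_i a_iT^i$ as a general series; to get $m_2'$ it does the opposite. This is not routine bookkeeping: with $u_2$ monomial you only see the two candidate exponents $m_2$ and $m_2p-m_1(p-1)$ coming from the leading term, and you must still show that no discarded term $a_iT^i$ --- in particular none of the terms with $v_K(a_i)>0$, whose exponents may be $p$-divisible and which require a further renormalisation $1+\pi^{pt}f(Z)^p=(1+\pi^tf(Z))^p-\cdots$ justified by reducedness of $(Y_b)_k$ --- contributes a smaller admissible exponent. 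That verification is the bulk of the paper's cases 4--6.

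A second, smaller discrepancy: the dichotomy the paper exploits is not the one you identify. When at least one $G_i$ equals $\mathcal H_{v_K(\lambda)}$ (cases 1--3), the fibre product $X_{1,b}\times_{X_b}X_{2,b}$ is already normal (étale base change preserves reducedness of the special fibre, then apply Theorem 3.4), so $Y_b$ equals the fibre product and the entire computation can be carried out modulo $\pi$ on special fibres. The genuinely hard cases, where one must work over $R$ and compare $\pi$-adic valuations of coefficients, are 4, 5 and 6 --- not 2 and 3, which you list among the difficult ones. Finally, your suggested cross-check via the tower formula for the (special) different is legitimate only as a cross-check: in the paper that relation is Corollary 1.5, \emph{deduced from} Theorem 1.1, and can be derived independently via [Serre] only in case 1, where both residue extensions are separable.
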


\section*{\S 1.1 Proof of Theorem 1.1}

\begin{proof} We treat each of the six occurring cases individually. However, there is an important distinction between the first three cases and the remaining cases.

In the first three cases, that is when at least one of the acting group schemes is the \'{e}tale group scheme $\mathcal{H}_{v_K(\lambda)}$, one can work modulo $\pi$ at the level of special fibres for, in this case, $Y_b = X_{1,b} \times_{X_b} X_{2,b}$ which implies $ (Y_b)_k = (X_{1,b})_k  \times_{(X_b)_k} (X_{2,b})_k$. Indeed, suppose $G_1 = \mathcal{H}_{v_K(\lambda)}$ so that the torsor $X_{1,b} \rightarrow {X_b}$  is \'{e}tale. Then, by base change, the torsor $X_{1,b} \times X_{2,b} \rightarrow X_{2,b} $ is automatically \'{e}tale. The special fibre of $X_{2,b}$ is reduced (because it is dominated by $Y_b$ whose special fibre is reduced) but as $X_{1,b} \times X_{2,b} \rightarrow X_{2,b} $ is \'{e}tale, this implies the special fibre of $X_{1,b} \times X_{2,b}$ is also reduced. Then, by Theorem 3.4 in this paper, $X_{1,b} \times_{X_b} X_{2,b}$ is normal and equal to $Y_b$, as required. 

In the last three cases, we do not have this situation, which means one must work above $X_b$ over $R$ without being permitted to reduce to the special fibre. However, we still proceed in a similar fashion, even if the computations are more involved. In particular, we start with the equation of $X_{i,b}\to X_b$, base change it to $X_{j,b}$ for $j\neq i$
and make appropriate (Kummer) transformations in order to find the torsor equations of $Y_b \rightarrow X_{j,b}$ and read off the conductors $m'_j$ for $j=1,2$. 
%It is in fact by applying a binomial/multinomial identity that we get the torsor equation of the normalisation. 
%This process could be interpreted geometrically as blowing-up. 
Note that in each case we can perform a change of the parameter $T$ of $A= R[[T]]\{T^{-1}\}$ so that one of the two torsor equations above $X_b = \mathrm{Spf} \left( A \right)$ is in its simplified form 
but we must assume the other equation remains in its original full power series form (cf. Background).

\textbf{1.} $(\mathcal{H}_{v_K(\lambda)}, \mathcal{H}_{v_K(\lambda)})$.
%Note that this case was previously treated in [GreenMatignon].
Here $m_1, m_2 < 0$. The $\mathcal{H}_{v_K(\lambda)}$ torsor equation $X_{i,b}\to X_b$ is given by $(1+\lambda Z_i)^p = 1+\lambda^p u_i$ 
%and the second is given by $(1+\lambda Z_2)^p = 1+\lambda^p u_2$ 
where $u_i \in  A^{\times}$. Modulo $\pi$, these torsor equations reduce to $z_i^p-z_i = \bar{u}_i$ on the special fibre, $i=1,2$. 
%On the special fibre the acting group schemes are \'etale isomorphic to $\mathbb{Z}/p\mathbb{Z}$.
We start by computing $m'_1$. We can choose the parameter $T$ so that $u_1 = T^{m_1}$, $u_2 = \sum_{i \in \mathbb{Z}} a_i T^i$ is a power series, accordingly, $\bar{u}_1=t^{m_1}$ and $\bar{u}_2 = \sum_{i=m_2}^{-1} \bar{a}_i t^i$ (cf. loc. cit.) where $\bar{a}_{m_2} \not= 0$. We can write $t$ in terms of $z_1$ in $(X_{1,b})_k$:
$z_1^p-z_1 = t^{m_1}\Leftrightarrow z_1^p \left( 1-z_1^{1-p} \right) = t^{m_1} \Leftrightarrow t =   \left( z_1^{1/m_1} \right)^p \left( 1-z_1^{1-p} \right)^{1/m_1}.$
%\end{equation*}
%By considering the special fibre, residually we have ramified degree $p$ extensions of complete discrete valuation fields above $k((t))$, where $ (X_b)_k =  \mathrm{Spec} \ k((t))$. For $ (X_b)_k$ the parameter is $t$ which means that for any degree $p$ extension above $(X_b)_k  $ a uniformising parameter will be anything that as a $p$-power multiplied by a unit equals $t$. Note that anything that is a parameter residually---that is, modulo $\pi$ on the special fibre---is a parameter over $R$. 
Thus a parameter of $(X_{1,b})_k$ is $z_1^{1/m_1}$ and so by letting $z:= z_1^{1/m_1}$ we can write 
$t =  z^p \left( 1-z^{-m_1(p-1)} \right)^{1/m_1}$.
We can now proceed to base change the torsor equation of $(X_{2,b})_k \rightarrow (X_b)_k$ to $(X_{1,b})_k$ to obtain the torsor equation for $(Y_b)_k \rightarrow (X_{1,b})_k$:
%\begin{equation*}\begin{split}
$$z_2^p-z_2 = \sum_{i=m_2}^{-1} \bar{a}_i t^i =  \sum_{i=m_2}^{-1} \bar{a}_i  z^{ip} \left( 1-z^{-m_1(p-1)} \right)^{i/m_1} 
=  \sum_{i=m_2}^{-1} \bar{a}_i z^{ip} \left( 1- \frac{i}{m_1} z^{-m_1(p-1)} + ... \right) $$
$$=  \bar{a}_{m_2} z^{m_2p} \left( 1- \frac{m_2}{m_1} z^{-m_1(p-1)} + ... \right) +  \bar{a}_{m_2+1} z^{(m_2+1)p} \left( 1- \frac{m_2+1}{m_1} z^{-m_1(p-1)} + ... \right)+...$$
$=  \bar{a}_{m_2} z^{m_2p} - \frac{m_2 \bar{a}_{m_2} }{m_1} z^{m_2p-m_1(p-1)} + \text{higher order terms}.$
%\end{split}\end{equation*}
%Note that leading term is a multiple of $p$ but removing from it an Artin-Schreier element---in particular, 

Expressing  $z^{ip}$ as $z^{ip}-z^{i}+z^{i}$ gives rise (after an Artin-Schreier transformation) to an equation of the form:
%\begin{equation*}\begin{split}
$z_2^p-z_2 = \bar{a}_{m_2} z^{m_2} - \frac{m_2 \bar{a}_{m_2} }{m_1} z^{m_2p-m_1(p-1)} +   \text{higher order terms}$. 
%\end{split}\end{equation*}
The conductor variable $m'_1$ is the smallest power of $z$ in the above expression which is coprime to $p$. The expression above indicates there are two candidates, namely $m_2$ and $m_2p-m_1(p-1)$.  Note that $m_2p - m_1(p-1) \leq m_2$ is equivalent to $m_1 \geq  m_2$. Therefore, when $m_1 \geq  m_2$ we have $m'_1 = m_2p - m_1(p-1)$ and when $m_1 < m_2$ we have $m'_1= m_2$.  The formula for $m'_2$ is obtained in a similar way as a consequence of the symmetry occurring in this case.

%We now determine $m'_2$. However, this is effectively entirely the same procedure. This time we choose $T$ so that $u_1 =\sum_{i \in \mathbb{Z}} a_i T^i $ and $u_2 =   T^{m_2}$ and, after reducing modulo $\pi$, we have $z_1^p-z_1=\sum_{i=m_1}^{-1}  \bar{a}_i t^i$ and $z_2^p - z_2 = t^{m_2}$ on the special fibre.  As a result, the parameter of $(X_{2,b})_k$ is given by $z := z_2^{1/m_2}$ and we have $t =  z^p \left( 1-z^{-m_2(p-1)} \right)^{1/m_2}$ in $(X_{2,b})_k$. By base changing the torsor equation of $(X_{1,b})_k\to (X_b)_k$ to $(X_{2,b})_k$
%to obtain the torsor equation for $ (Y_b)_k \rightarrow (X_{2,b})_k$, we find the same result but of course with $m_1$ and $m_2$ interchanged. Therefore, when $m_1 \geq  m_2$ we have that $m'_2 = m_1$ and when $m_1 < m_2$ we have that $m'_2= m_1p - m_2(p-1)$.

\begin{center}
\line(1,0){250}
\end{center}

\textbf{2.} $(\mathcal{H}_{v_K(\lambda)}, \mu_p)$.
Here $m_1 < 0$ while $m_2 \geq 0$ hence $m_1 \leq m_2$. The torsor equation for $\mathcal{H}_{v_K(\lambda)}$ is given by $(1+\lambda Z_1)^p = 1+\lambda^p u_1$ and for $\mu_p$ by $Z_2^p = u_2$ where $u_1, u_2 \in A^{\times}$. Modulo $\pi$, these torsor equations reduce to $z_1^p-z_1 = \bar{u}_1$ and  $z_2^p= \bar{u}_2$ with acting group schemes $\mathbb{Z}/p\mathbb{Z}$ and $\mu_p$ respectively on the special fibre.

We start by computing $m'_1$. We can choose the parameter $T$ so that $u_1 = T^{m_1}$ but $u_2 = \sum_{i \in \mathbb{Z}} a_i T^i$ remains as a power series and therefore, accordingly, $\bar{u}_1=t^{m_1}$ and $\bar{u}_2 = \sum_{i  \geq l} \bar{a}_i t^i$ for some integer $l$ where $\bar{a}_{l} \not= 0$. As in case 1 of this proof, we have that the parameter of $(X_{1,b})_k$ is $z:=z_1^{1/m_1}$ and we can write $t = z^p \left( 1-z^{-m_1(p-1)} \right)^{1/m_1}$. We now have two cases to treat, namely $(a1)$ and $(a2)$, depending on whether or not $l$ is coprime to $p$.

\textbf{(a1)}  In this case, $\mathrm{gcd}(l,p)=1 \Rightarrow m_2=0$. We base change the torsor equation of $(X_{2,b})_k \rightarrow (X_b)_k$ to $(X_{1,b})_K$
to obtain the torsor equation for $ (Y_b)_k \rightarrow (X_{1,b})_k$:
%\begin{equation*}\begin{split}
$$z_2^p   =  \sum_{i  \geq l} \bar{a}_i t^i = \sum_{i  \geq l} \bar{a}_i z^{ip} (1-z^{-m_1(p-1)})^{i/m_1} 
 = \sum_{i  \geq l} \bar{a}_i z^{ip} (1- \frac{i}{m_1}z^{-m_1(p-1)}+...) $$
$$= \bar{a}_{l} z^{lp} ( 1- \frac{l}{m_1}z^{-m_1(p-1)}+...) + \bar{a}_{l+1} z^{(l+1)p} (1- \frac{l+1}{m_1}z^{-m_1(p-1)}+...)+... $$
%\end{split}\end{equation*}
As this is a $\mu_p$-torsor equation, the factor $\bar{a}_{l} z^{lp}$ can be eliminated by multiplication by a suitable $p$-power to obtain an equation:
%\begin{equation*}\begin{split}
$z_2^p = 1- \frac{l}{m_1}z^{-m_1(p-1)}+  \text{higher order terms}$.
 %\end{split}\end{equation*}
So the conductor variable is $m'_1=-m_1(p-1)$, as this is the smallest power of $z$ in the above expression which is coprime to $p$. 

\textbf{(a2)}  In this case $\mathrm{gcd}(l,p) \not=1$. By the details outlined at the start of this paper (cf. Background), 
we know that the torsor equation of $(X_{2,b})_k \rightarrow (X_b)_k$ can be expressed as follows:
%\begin{equation*}\begin{split}
$z_2^p   =  1+ \bar{a}_{m_2}t^{m_2} + \sum_{i  > m_2} \bar{a}_{i} t^{i} =  1+ \sum_{i  \geq m_2} \bar{a}_{i} t^{i}.$
%\end{split}\end{equation*}
By a suitable change of variables, we can express this $\mu_p$-torsor as:
%\begin{equation*}\begin{split}
$z_2^p  =  1+ \sum_{i  \geq m_2} \bar{a}_{i} t^{i}  \Leftrightarrow (z_2 - 1)^p= \sum_{i  \geq m_2} \bar{a}_{i} t^{i} \Rightarrow z_2^p  = \sum_{i  \geq m_2} \bar{a}_{i} t^{i}.$
%\end{split}\end{equation*}
We can now proceed to base change the torsor equation of $(X_{2,b})_k \rightarrow (X_b)_k$ to $(X_{1,b})_k$ to obtain the torsor equation for $ (Y_b)_k \rightarrow (X_{1,b})_k$:
%\begin{equation*}\begin{split}
$$z_2^p   =  \sum_{i  \geq m_2} \bar{a}_{i} z^{pi} \left( 1-z^{-m_1(p-1)} \right)^{i/m_1} 
=  \sum_{i  \geq m_2} \bar{a}_{i} z^{pi} \left( 1 - \frac{i}{m_1}z^{-m_1(p-1)} + ... \right) $$
$$=  \bar{a}_{m_2} z^{m_2p} \left( 1 - \frac{m_2}{m_1}z^{-m_1(p-1)} + ... \right) + \bar{a}_{m_2+1} z^{(m_2+1)p} \left( 1 - \frac{m_2+1}{m_1}z^{-m_1(p-1)} + ... \right)+... $$
$=  \bar{a}_{m_2} z^{m_2p} - \frac{m_2 \bar{a}_{m_2} }{m_1}z^{m_2p-m_1(p-1)} +  \text{higher order terms}.$
%\end{split}\end{equation*}
%We can't eliminate the leading term as it is not a multiplicative factor of every term in the expression. 

So $m'_1=m_2 p-m_1(p-1)$ as this is the smallest power of $z$ in the above expression which is not divisible by $p$. 
%Note that the expression for $m'_1$ in this case is consistent with its (a1) counterpart, because by substituting $m_2=0$ one obtains $m'_1=-m_1(p-1)$.

We now determine $m'_2$. We choose $T$ so that $u_1 = \sum_{i \in \mathbb{Z} } a_i T^i$ and $u_2$ is given by $T^h$ in the case (a1) and by $1 +T^{m_2}$ in the case (a2). After reducing these equations modulo $\pi$, we have $z_1^p-z_1=\sum_{i=m_1}^{-1}  \bar{a}_i t^i$ and (a1) $z_2^p = t^{h}$ or (a2) $z_2^p =1+ t^{m_2}$ on the special fibre. 

\textbf{(a1)} We can write $t$ in terms of $z_2$ in $(X_{2,b})_k$ since $z_2^p =t^{h} \Leftrightarrow t= \left( z_2^{1/h} \right)^p$. This implies that $z :=  z_2^{1/h}$ is a parameter of $(X_{2,b})_k$ 
and we have that $t=z^p$. We base change the equation of $(X_{1,b})_k\to (X_b)_k$ to $(X_{2,b})_k$
to obtain the torsor equation for $ (Y_b)_k \rightarrow (X_{2,b})_k$:
%\begin{equation*}\begin{split}
$z_1^p - z_1  =  \sum_{i=m_1}^{-1}  \bar{a}_i t^i  = \sum_{i=m_1}^{-1}  \bar{a}_i z^{ip}  
= \bar{a}_{m_1} z^{m_1p} +  \text{higher order terms}$. 
%\end{split}\end{equation*}
The leading term $z^{m_1p} $ (as well as all the other terms $z^{ip}$) is a multiple of $p$ but, as in case 1 of this proof, after an Artin-Schreier transformation we obtain:
%\begin{equation*}\begin{split}
$z_1^p - z_1 =  \bar{a}_{m_1} z^{m_1} +  \text{higher order terms}$.  
%\end{split}\end{equation*}
Therefore, the conductor variable $m'_2=m_1$.

\textbf{(a2)} As above, we write $t$ in terms of $z_2$ in $(X_{2,b})_k$:
%\begin{equation*}\begin{split}
$z_2^p = 1+t^{m_2}  \Leftrightarrow z_2^p -1= t^{m_2} 
\Leftrightarrow   (z_2 -1 )^p= t^{m_2}  
\Leftrightarrow t=  \left( (z_2 -1 )^{1/m_2}  \right)^p.$
%\end{split}\end{equation*}
This means that the parameter of $(X_{2,b})_k$ is $z:=(z_2 -1 )^{1/m_2}$ and so, from the above, we obtain $t =  z^p$. Now, we base change 
the equation of $(X_{1,b})_k\to (X_b)_k$ to $(X_{2,b})_k$
to obtain the torsor equation for $ (Y_b)_k \rightarrow (X_{2,b})_k$:
%\begin{equation*}\begin{split}
$z_1^p - z_1 =  \sum_{i=m_1}^{-1}  \bar{a}_i t^i = \sum_{i=m_1}^{-1}  \bar{a}_i z^{ip}  
= \bar{a}_{m_1} z^{m_1p} + \text{higher order terms}.$ 
%\end{split}\end{equation*}
After an Artin-Schreier transformation we obtain:
%\begin{equation*}\begin{split}
$z_1^p - z_1 =  \bar{a}_{m_1} z^{m_1}  + \text{higher order terms}.$ 
%\end{split}\end{equation*}
Therefore, as in the (a1) case, the conductor variable $m'_2=m_1$.

\begin{center}
\line(1,0){250}
\end{center}

\textbf{3.} $(\mathcal{H}_{v_K(\lambda)}, \mathcal{H}_n)$.
Here $m_1 < 0$ while $m_2 \in \mathbb{Z}$.  The torsor equation for $\mathcal{H}_{v_K(\lambda)}$ is given by $(1+\lambda Z_1)^p = 1+\lambda^p u_1$ and for $\mathcal{H}_n$ by $(1+\pi^n Z_2)^p = 1+ \pi^{np} u_2$ where $u_1, u_2 \in A^{\times}$. Modulo $\pi$, these torsor equations reduce to $z_1^p-z_1 = \bar{u}_1$ and  $z_2^p= \bar{u}_2$ with acting group schemes $\mathbb{Z}/p\mathbb{Z}$ and $\alpha_p$ respectively on the special fibre.

We start by computing $m'_1$. We can choose the parameter $T$ so that $u_1 = T^{m_1}$ but $u_2 = \sum_{i \in \mathbb{Z} } a_i T^i$ remains as a power series and therefore, accordingly, $\bar{u}_1=t^{m_1}$ and $\bar{u}_2 = \sum_{i  \geq l} \bar{a}_i t^i$ for some integer $l$ where $\bar{a}_{l} \not= 0$. Recall that $l=m_2$ here. As in case 1 of this proof, we have that the parameter of $(X_{1,b})_k$ is $z:=z_1^{1/m_1}$ and we can write $t = z^p \left( 1-z^{-m_1(p-1)} \right)^{1/m_1}$. We base change the torsor equation of $(X_{2,b})_k \rightarrow (X_b)_k$ to $(X_{1,b})_k$ to obtain the torsor equation for $ (Y_b)_k \rightarrow (X_{1,b})_k$:
%\begin{equation*}\begin{split}
$$z_2^p  =  \sum_{i  \geq l} \bar{a}_i t^i =  \sum_{i  \geq l} \bar{a}_i z^{pi} \left( 1-z^{-m_1(p-1)} \right)^{i/m_1}   
=  \sum_{i  \geq l} \bar{a}_i z^{pi} \left( 1-\frac{i}{m_1} z^{-m_1(p-1)} +... \right)$$ 
$$=  \bar{a}_l z^{pl} \left( 1-\frac{l}{m_1} z^{-m_1(p-1)} +... \right) +\bar{a}_{l+1} z^{p(l+1)} \left( 1-\frac{l+1}{m_1} z^{-m_1(p-1)} +... \right)+...$$
$=  \bar{a}_l z^{pl} -\frac{ l \bar{a}_l}{m_1} z^{lp-m_1(p-1)} + \text{higher order terms}.$
%\end{split}\end{equation*}

As this is an $\alpha_p$-torsor equation, the term $\bar a_lz^{pl}$ can be removed and we can ignore the terms involving $i$'s
which are divisible by $p$. So the conductor variable $m'_1=m_2 p-m_1(p-1)$, as this would be the smallest power of $z$ which is not divisible by $p$.

It remains to compute $m'_2$ in this case. This time we choose the parameter $T$ so that $u_1 = \sum_{i \in \mathbb{Z} } a_i T^i$ is the power series and $u_2 =  T^{m_2}$. After reducing modulo $\pi$, we have $\bar{u}_1=\sum_{i=m_1}^{-1}  \bar{a}_i t^i$ and $\bar{u}_2 = t^{m_2}$ on the special fibre. We can write $t$ in terms of $z_2$ in $(X_{2,b})_k$ since $z_2^p =t^{m_2} \Leftrightarrow t= \left( z_2^{1/m_2} \right)^p$. This implies that $z :=  z_2^{1/m_2}$ is the parameter of $(X_{2,b})_k$ and we have that $t=z^p$. We base change 
the equation of $(X_{1,b})_k \rightarrow (X_b)_k$ to $(X_{2,b})_k$ 
to obtain the torsor equation for $ (Y_b)_k \rightarrow (X_{2,b})_k$:
%\begin{equation*}\begin{split}
$z_1^p - z_1 =  \sum_{i=m_1}^{-1}  \bar{a}_i t^i  = \sum_{i=m_1}^{-1}  \bar{a}_i z^{ip}  = \bar{a}_{m_1} z^{m_1p} +  \text{higher order terms}.$
%\end{split}\end{equation*}
After an Artin-Schreier transformation we obtain:
%\begin{equation*}\begin{split}
$z_1^p - z_1 =  \bar{a}_{m_1} z^{m_1} +  \text{higher order terms}.$ 
%\end{split}\end{equation*}
Therefore, the conductor variable $m'_2=m_1$.

\begin{center}
\line(1,0){250}
\end{center}

We remind the reader that in the remaining three cases, we cannot reduce modulo $\pi$ and work at the level of special fibres. Thus, the computations here are slightly more involved. It will be useful to recall in advance here the following equality given by the Binomial Theorem
$$1 +  \left( \pi^n b Z \right)^p = \left( 1 +  \pi^n b Z \right)^p - \sum_{k=1}^{p-1} \binom{p}{k} \left( \pi^n b Z \right)^k \ \ \ (*)$$
which can be generalised to the Multimonomial Theorem (or Identity)
$$1 +  \sum_i \left( \pi^n b_i Z^i \right)^p = \left( 1 +  \sum_i \pi^n b_i Z^i \right)^p - p \sum_{i}  \pi^n b_i Z^i  + \text{higher order terms}\ \ \ (**).$$
%where "higher order term" is in comparison with the term $- p \sum_{i}  \pi^n b_i Z^i$.
We also mention here that to circumvent our inability to take, say, $p$-th roots of coefficients belonging to the ring $R$, 
%(which, unlike $k$, is not perfect)
we can adopt the following technique for a given element $a_i$ where $v_K(a_i)=0$; namely, it can be expressed as $a_i = b_i^p + c_i$ for some $b_i, c_i \in R$ such that $v_K(b_i)=0$ and $v_K(c_i)>0$.
 %Finally, note that the expression `higher order terms' appearing in the computations will broaden to also include terms with coefficients of higher valuation (i.e. involving 
 %higher powers or `orders' of $\pi$). 

\textbf{4.} $(\mathcal{H}_n, \mu_p)$.
Here $m_1 \in \mathbb{Z}$ while $m_2 \geq 0$.  The torsor equation for $\mathcal{H}_n$ is given by  $(1+\pi^n Z_1)^p = 1+ \pi^{np} u_1$ and for $\mu_p$ by $Z_2^p = u_2$ where $u_1, u_2 \in A^{\times}$. 
We start by computing $m'_1$. We can choose the parameter $T$ so that $u_1 = T^{m_1}$ but $u_2 = \sum_{i \in \mathbb{Z}} a_i T^i$ remains as a power series. We can express $T$ in terms of $Z_1$ in order to read off the parameter for $X_{1,b}$:
%\begin{equation*}\begin{split}
$(1+\pi^n Z_1)^p = 1+ \pi^{np} T^{m_1}  \Leftrightarrow  \pi^{np} Z_1^p + \sum_{k=1}^{p-1} \binom{p}{k} \pi^{nk} Z_1^k +1 = 1+ \pi^{np} T^{m_1}
\Leftrightarrow  \pi^{np} Z_1^p + \sum_{k=1}^{p-1} \binom{p}{k} \pi^{nk} Z_1^k = \pi^{np} T^{m_1}
\Leftrightarrow Z_1^p \left(  \pi^{np}  + \sum_{k=1}^{p-1} \binom{p}{k} \pi^{nk} Z_1^{k-p}  \right) = \pi^{np} T^{m_1}  
\Leftrightarrow Z_1^p \left(  1  + \sum_{k=1}^{p-1} \binom{p}{k} \pi^{n(k-p)} Z_1^{k-p}  \right) =  T^{m_1}  
\Leftrightarrow T=\left( Z_1^{1/m_1} \right)^p \left(  1  + \sum_{k=1}^{p-1} \binom{p}{k} \pi^{n(k-p)} Z_1^{k-p}  \right)^{1/m_1}.$

%\end{split}\end{equation*}
We know from the proof of case 3 that $Z:= Z_1^{1/m_1}$ is the parameter of $X_{1,b}$ modulo $\pi$ (hence it is a parameter of $X_{1,b}$) and so we can write:
$$T = Z^p \left(  1  + \sum_{k=1}^{p-1} \binom{p}{k} \pi^{-n(p-k)} Z^{-m_1(p-k)}  \right)^{1/m_1}. $$
For convenience, set $B=  \sum_{k=1}^{p-1} \binom{p}{k} \pi^{-n(p-k)} Z^{-m_1(p-k)}$ so that $T= Z^p \left(  1  + B \right)^{1/m_1}$. We now have two cases to treat, namely $(a1)$ and $(a2)$, depending on whether or not $l=\mathrm{min} \{ i | v_K(a_i) = 0 \}$ is coprime to $p$.

\textbf{(a1)}  In this case, $\mathrm{gcd}(l,p)=1 \Rightarrow m_2=0$. We base change the torsor equation of $X_{2,b} \rightarrow X_b$ to $X_{1,b}$ to obtain:
%\begin{equation*}\begin{split}
$Z_2^p  = \sum_{i \in \mathbb{Z}} a_i T^i = \sum_{i \in \mathbb{Z}} a_i  Z^{ip} \left(  1  + B \right)^{i/m_1}= \sum_{i \in \mathbb{Z}} a_i  Z^{ip} \left(  1  + \frac{i}{m_1} B +... \right)
= \sum_{i \in \mathbb{Z}} a_i  Z^{ip}   + \sum_{i \in \mathbb{Z}} \frac{i a_i}{m_1}  Z^{ip} B +...
= \sum_{v_K(a_i)=0} a_i  Z^{ip}  +  \sum_{v_K(a_i)>0} a_i  Z^{ip} + \sum_{i \in \mathbb{Z}} \frac{i a_i}{m_1}  Z^{ip} B +...$.
%\end{split}\end{equation*}
For the terms where $v_K(a_i)=0$, we can express $a_i = b_i^p + c_i$ for some $b_i, c_i \in R$ with $v_K(b_i)=0$ and $v_K(c_i)>0$ to obtain:
%\begin{equation*}\begin{split}
$Z_2^p = \sum_{v_K(b_i)=0} b_i^p  Z^{ip}  + \sum_{v_K(c_i)>0} c_i  Z^{ip} +  \sum_{v_K(a_i)>0} a_i  Z^{ip}  + \sum_{i \in \mathbb{Z}} \frac{i a_i}{m_1}  Z^{ip} B+...  
= \sum_{v_K(b_i)=0} \left( b_i  Z^i \right)^p  + \sum_{v_K(d_i)>0} d_i  Z^{ip} + \sum_{i \in \mathbb{Z}} \frac{i a_i}{m_1}  Z^{ip} B+... ,$
%\end{split}\end{equation*}
where we set $d_i = c_i $ if $v_K(a_i)=0$ and $d_i= a_i$ if $v_K(a_i)>0$. Now, as this is generically a $\mu_p$-torsor we can take the $p$-power term $\left( b_l Z^l \right)^p$ in the first summation
into factor, so that we get a new equation:
%\begin{equation*}\begin{split}
$Z_2^p  = 1 + \sum_{ \substack{ v_K(b_i)=0  \\  i \not= l   }} \left(  b_l^{-1} b_i   Z^{i-l} \right)^p + \sum_{v_K(d_i)>0} b_l^{-p} d_i  Z^{p(i-l) } +   \sum_{i \in \mathbb{Z}} \frac{i b_l^{-p}  a_i}{m_1} Z^{p(i-l)} B + ... ,$
%\end{split}\end{equation*}
%The summation $\sum_{v_K(d_i)>0} d_i b_l^{-p} Z^{p(i-l) }$ does not contribute to the conductor variable since the powers of $Z$ involved are $p$-powers so we can safely exclude it. Even if this were not the case, the $\pi$ valuation of its coefficients would rule it out. For the rest of this proof we automatically operate in this way. 
which can be rewritten using the identity $(**)$, and after multiplying by a suitable $p$-power, as:
%Now, by the "Multinomial Identify" stated above the first two terms $ 1 + \sum _{ \substack{ v_K(b_i)=0  \\  i \not= l }} \left(  b_l^{-1} b_i   Z^{i-l} \right)^p$ can be expressed in the form \[ \left(1 + \sum_{ \substack{ v_K(b_i)=0  \\  i \not= l   }} b_l^{-1} b_i   Z^{i-l} \right)^p - p  \sum_{ \substack{ v_K(b_i)=0  \\  i \not= l   }}  b_l^{-1} b_i Z^{i-l} + \text{higher order terms},\] so that we can write the torsor equation as:
%\begin{equation*}\begin{split}
%Z_2^p  &  =  \left(1 +  \sum_{ \substack{ v_K(b_i)=0  \\  i \not= l   }} b_l^{-1} b_i   Z^{i-l} \right)^p - p  \sum_{ \substack{ v_K(b_i)=0  \\  i \not= l   }}  b_l^{-1} b_i  Z^{i-l} \\
%& \ \ \ \ \ \ \ \  \ \ \ + \sum_{v_K(d_i)>0} b_l^{-p} d_i  Z^{p(i-l) }+ \sum_{i \in \mathbb{Z}} \frac{i b_l^{-p}  a_i}{m_1} Z^{p(i-l)} B + \text{higher order terms}. \\ 
%\end{split}\end{equation*}
%Then, by multiplying our torsor equation by the inverse $p$-power \[ \left(1 + \sum_{ \substack{ v_k(b_i)=0 \\ i \not= l } } b_l^{-1} b_i  Z^{i-l} \right)^{-p} = 1 - p \sum_{ \substack{ v_k(b_i)=0 \\ i \not= l } } b_l^{-1} b_i  Z^{i-l} + \text{higher order terms}\] gives us an equation of the form: 
%\begin{equation*}\begin{split}
$Z_2^p = 1 - p \sum_{ \substack{ v_K(b_i)=0  \\  i \not= l   }}  b_l^{-1} b_i  Z^{i-l} + \sum_{v_K(d_i)>0} b_l^{-p} d_i  Z^{p(i-l) }  
+\sum_{i \in \mathbb{Z}} \frac{i b_l^{-p}  a_i}{m_1} Z^{p(i-l)} B + \text{higher order terms}. $ 
%\end{split}\end{equation*}
The summation $\sum_{v_K(d_i)>0} d_i b_l^{-p} Z^{p(i-l) }$ does not contribute to the conductor variable since the powers of $Z$ involved are $p$-powers so we can safely exclude it. 
%Even if this were not the case, the $\pi$ valuation of its coefficients would rule it out. 
Indeed, if the coefficient with smallest $K$-valuation in the right hand side of the above equation occurs in the above summation say in the term $d_i b_l^{-p} Z^{p(i-l)}$
 then $v_K(d_i)$ is necessarily divisible by $p$ (since $(Y_b)_k$ is reduced), and we can assume without loss of generality that this summation is of the form $\pi^{pt} (f(Z)^{p}+\pi g(Z))$,
 where $f(Z)\in A_1$ is a unit and $g(Z)\in A_1$. Writing $1+\pi ^{pt}f(Z)^{p}=(1+\pi^tf(Z))^p-\sum_{k=1}^{p-1} \binom{p}{k} \left( \pi^t f(Z)\right)^k$ and multiplying the above equation by 
 $(1+\pi^tf(Z))^{-p}$ we obtain an equation: 
 %\begin{equation*}\begin{split}
$Z_2^p  = 1 - p \sum_{ \substack{ v_K(b_i)=0  \\  i \not= l   }}  b_l^{-1} b_i  Z^{i-l} - \sum_{k=1}^{p-1} \binom{p}{k} \left( \pi^t f(Z)\right)^k
+\sum_{i \in \mathbb{Z}} \frac{i b_l^{-p}  a_i}{m_1} Z^{p(i-l)} B + \text{higher order terms}$.
%\end{split}\end{equation*}
Furthermore the summation $\sum_{k=1}^{p-1} \binom{p}{k} \left( \pi^t f(Z)\right)^k$ doesn't contribute anymore towards the
coefficient with smallest possible valuation in the right hand side of the above equation. 
For the rest of this proof we will automatically operate in this way and ignore such summations. 
Then, up to multiplying the coefficients by units, we have:
%\begin{equation*}\begin{split}
$Z_2^p= 1 - \pi^{v_K(p)} \sum_{\substack { v_K(b_i)=0  \\  i \not= l   }}  b_l^{-1} b_i  Z^{i-l} 
 +  \pi^{v_K(p)-n(p-1)}  \sum_{i \in \mathbb{Z}} \frac{i b_l^{-p}  a_i}{m_1}  Z^{p(i-l)-m_1(p-1)} + \text{higher order terms}.$
%\end{split}\end{equation*}

Clearly the smallest power of $\pi$ is $v_K(p)-n(p-1)$ and so we look to the summation with that coefficient for the conductor variable. For zero valuation coefficients, the index of the summation will start at the integer $l$, the index corresponding to the lowest zero valuation coefficient. As $m'_1$ is the smallest exponent appearing in the relevant summation which is coprime to $p$, we have that 
$m'_1=-m_1(p-1)$.

\textbf{(a2)}  In this case $\mathrm{gcd}(l,p) \not=1$. Again, we take $T= Z^p \left(  1  + B \right)^{1/m_1}$ where $B$ is as defined previously. We then base change the $\mu_p$-torsor equation of $X_{2,b} \rightarrow X_b$ to $X_{1,b}$ to obtain:
%\begin{equation*}\begin{split}
$Z_2^p  = \sum_{i \in \mathbb{Z}} a_i T^i = 1 +  \sum_{ \substack{ v_K(a_i)=0 \\ i \geq m_2 }} a_i T^i + \sum_{v_K(a_i)>0} a_i T^i
= 1 +  \sum_{ \substack{ v_K(a_i)=0 \\ i \geq m_2 }} a_i Z^{ip} \left(  1  + B \right)^{i/m_1}+ \sum_{v_K(a_i)>0} a_i Z^{ip} \left(  1  + B \right)^{i/m_1}.$
%\end{split}\end{equation*}
For the terms where $v_K(a_i)=0$, we can express $a_i = b_i^p + c_i$ for some $b_i, c_i \in R$ with $v_K(b_i)=0$ and $v_K(c_i)>0$ so that:
%\begin{equation*}\begin{split}
$Z_2^p  = 1 +  \sum_{v_K(b_i)=0} b_i^p Z^{ip} \left(  1  + B \right)^{i/m_1}+ \sum_{v_K(c_i)>0} c_i Z^{ip} \left(  1  + B \right)^{i/m_1} 
+ \sum_{v_K(a_i)>0} a_i Z^{ip} \left(  1  + B \right)^{i/m_1} 
= 1 +  \sum_{v_K(b_i)=0} b_i^p Z^{ip} \left(  1  + B \right)^{i/m_1}+ \sum_{v_K(d_i)>0} d_i Z^{ip} \left(  1  + B \right)^{i/m_1},$ 
%\end{split}\end{equation*}
where we again set $d_i = c_i $ if $v_K(a_i)=0$ and $d_i=a_i$ if $v_K(a_i)>0$. Now we continue by expansion of the binomial terms:
%\begin{equation*}\begin{split}
$Z_2^p = 1 +  \sum_{v_K(b_i)=0} b_i^p Z^{ip} \left(  1  + \frac{i}{m_1} B +... \right)+ \sum_{v_K(d_i)>0} d_i Z^{ip} \left(  1  + \frac{i}{m_1} B  + ... \right)
= 1 +  \sum_{v_K(b_i)=0} \left( b_i Z^{i} \right)^p   +  \sum_{v_K(b_i)=0} \frac{i}{m_1}  b_i^p Z^{ip}B  
\newline
+ \sum_{v_K(d_i)>0} d_i Z^{ip} + \sum_{v_K(d_i)>0} \frac{i}{m_1} d_i Z^{ip} B  + \text{higher order terms}. $
%\end{split}\end{equation*}

The summation $\sum_{v_K(d_i)>0} d_i Z^{ip}$ does not contribute to the conductor variable since the powers of $Z$ involved are $p$-powers so we can safely exclude it
(cf. Proof in case (a1)).
%By the "Multinomial Identify" (recalled above) the first two terms $1 + \sum_{v_k(b_i)=0}  \left( b_i Z^{i} \right)^p$ can be expressed in the form
%\[ \left( 1 +  \sum_{v_K(b_i)=0} b_i Z^i \right)^p - p \sum  _{v_K(b_i)=0} b_i Z^i +  \text{higher order terms},\]
%so that we can write:
%\begin{equation*}\begin{split}
%Z_2^p  & = \left( 1 +  \sum_{v_K(b_i)=0} b_i Z^i \right)^p - p \sum_{v_K(b_i)=0}  b_i Z^i \\
%& \ \ \ \ \ \ \ \ \ \ \ +  \sum_{v_K(b_i)=0} \frac{i}{m_1}  b_i^p Z^{ip}B  + \text{higher order terms}. \\
%\end{split}\end{equation*}
%Then by multiplying our torsor equation by the inverse $p$-power \[  \left( 1 +  \sum_{v_K(b_i)=0} b_i Z^i \right)^{-p} = 1 - p  \sum_{v_K(b_i)=0} b_i Z^i  + \text{higher order terms}\] gives us an equation of the form: 
Now using the identity $(**)$, and after multiplying by a suitable $p$-power, we obtain an equation:
%&\begin{equation*}\begin{split}
$Z_2^p   =1 - p \sum_{v_K(b_i)=0}  b_i Z^i +  \sum_{v_K(b_i)=0} \frac{i}{m_1}  b_i^p Z^{ip}B  + \text{higher order terms} 
=1 - p \sum_{i \geq m_2}  b_i Z^i +  \sum_{i \geq m_2} \frac{i}{m_1}  b_i^p Z^{ip}B  + \text{higher order terms} $, which equals:

\noindent
$1 - p \sum_{i \geq m_2}  b_i Z^i +  
\sum_{i \geq m_2} \frac{i}{m_1}  b_i^p Z^{ip}
\left( \sum_{k=1}^{p-1} \binom{p}{k} \pi^{-n(p-k)} Z^{-m_1(p-k)} \right) 
+ \text{higher order terms} 
=1 - \pi^{v_K(p)} \sum_{i \geq m_2}  b_i Z^i + \pi^{v_K(p)-n(p-1)} \sum_{i \geq m_2} \frac{i  b_i^p}{m_1}   Z^{ip-m_1(p-1)} + \text{higher order terms};$
%\end{split}\end{equation*}
up to multiplying the coefficients by units. The second summation has the smallest $\pi$ valuation and so the conductor variable is $m'_1=m_2 p-m_1(p-1)$. 
%Like in the previous case, the expression for $m'_1$ here is also consistent with the case $(a1)$ since there we have that $m_2=0$ and one obtains $m'_1=-m_1(p-1)$ from $m'_1=m_2 p-m_1(p-1)$.

We now determine $m'_2$. We choose the parameter $T$ so that $u_1 = \sum_{i \in \mathbb{Z} } a_i T^i$ and $u_2 = T^h$ in the case (a1) while $u_2 = 1 +T^{m_2}$  in the case (a2).

\textbf{(a1) }In this case, $m_2=0$. The parameter of $X_{2,b}$ is $Z:=Z_2^{1/h}$ where $T=Z^p$ is obtained from the torsor equation $Z_2^p = T^h \Leftrightarrow \left( Z_2^{1/h} \right)^p =T$. We base change the torsor equation of $X_{1,b} \rightarrow X_b$ to $X_{2,b}$ to obtain:
%\begin{equation*}\begin{split}
$(1+\pi^n Z_1)^p  =1+ \pi^{np} \sum_{i \in \mathbb{Z} } a_i T^i =1+ \pi^{np} \sum_{i \in \mathbb{Z} } a_i Z^{pi}   
= 1+ \pi^{np} \sum_{v_K(a_i)=0} a_i Z^{pi}+ \pi^{np} \sum_{v_K(a_i)>0} a_i Z^{pi}. $
%\end{split}\end{equation*}
For the terms where $v_K(a_i)=0$, we can express $a_i = b_i^p + c_i$ for some $b_i, c_i \in R$ with $v_K(b_i)=0$ and $v_K(c_i)>0$. Hence
%\begin{equation*}\begin{split}
$(1+\pi^n Z_1)^p = 1+ \pi^{np} \sum_{v_K(b_i)=0} b^p_i Z^{pi}+\pi^{np} \sum_{v_K(c_i)>0} c_i Z^{pi}+ \pi^{np} \sum_{v_K(a_i)>0} a_i Z^{pi}   
= 1+  \sum_{v_K(b_i)=0} \left( \pi^{n} b_i Z^{i} \right)^p+ \sum_{v_K(c_i)>0} \pi^{np} c_i Z^{pi}+  \sum_{v_K(a_i)>0} \pi^{np} a_i Z^{pi}   
= 1+  \sum_{v_K(b_i)=0} \left( \pi^{n} b_i Z^{i} \right)^p+\sum_{v_K(d_i)>0} \pi^{np} d_i Z^{pi},$  
%\end{split}\end{equation*}
where we again set $d_i = c_i$ if $v_K(a_i)=0$ and $d_i =a_i$ if $v_K(a_i)>0$. 
Using the identity $(**)$, and after multiplying by a suitable $p$-power, we get:
%Note that the first two terms $1+  \sum_{v_K(b_i)>0} \left( \pi^{n} b_i Z^{i} \right)^p$ can be expressed using the Multinomial Identity as:
%\[ 1+  \sum _{v_K(b_i)=0}\left( \pi^{n} b_i Z^{i} \right)^p =  \left( 1 +  \sum _{v_K(b_i)=0}\pi^n b_i Z^i \right)^p - p \sum  _{v_K(b_i)=0}\pi^n b_i Z^i + \text{higher order terms} \]
%and then by multiplying this $\mu_p$-torsor equation by the $p$-power $\left( 1+  \sum _{v_K(b_i)=0}\pi^{n} b_i Z^{i} \right)^{-p}$ yields:
%\begin{equation*}\begin{split}
$(1+\pi^n Z_1)^p = 1 - p \sum_{v_K(b_i)=0}  \pi^{n} b_i Z^{i} + \sum_{v_K(d_i)>0} \pi^{np} d_i Z^{pi} + \text{higher order terms}
= 1 - \pi^{v_K(p)+n} \sum_{i \geq  m_1}  b_i Z^{i} + \text{higher order terms}$;
%\end{split}\end{equation*}
up to multiplying the coefficients by units. Now, $m'_2$ is the smallest exponent appearing in this leading summation which is coprime to $p$ and so $m'_2=m_1$.

\textbf{(a2)}  In this case $\mathrm{gcd}(l,p) \not=1$. From the torsor equation $Z_2^p=1+T^{m_2}$ we can deduce:
%\begin{equation*}\begin{split}
$Z_2^p = 1+ T^{m_2}  \Leftrightarrow Z_2^p -1= T^{m_2} 
\Leftrightarrow(Z_2-1)^p  -  \sum^{p-1}_{k=1} \binom{p}{k} (-1)^k Z_2^k   =T^{m_2} 
\Leftrightarrow (Z_2-1)^p \left( 1  - (Z_2-1)^{-p} \sum^{p-1}_{k=1} \binom{p}{k} (-1)^k Z_2^k \right)  =T^{m_2} $ which implies:
%\Leftrightarrow 
%T=
\newline
$T=\left( (Z_2 - 1 )^{\frac{1}{m_2}} \right)^p \left( 1 - (Z_2 - 1 )^{-p} \sum^{p-1}_{k=1} \binom{p}{k} (-1)^k Z_2^k \right)^{\frac{1}{m_2}}$,
%\end{split}\end{equation*}
and so we take $Z := (Z_2 -1)^{1/m_2}$, which we already know to be the parameter of $X_{2,b}$ by case 2 of this proof, in order to write:
$T =  Z^p \left( 1 - Z^{-m_2 p} \sum^{p-1}_{k=1} \binom{p}{k} (-1)^k (1+Z^{m_2})^k \right)^{\frac{1}{m_2}}. $
For simplicity, let us denote $ \sum^{p-1}_{k=1} \binom{p}{k} (-1)^k (1+Z^{m_2})^k$ by $B$ so that we can write $T=Z^p \left( 1 - Z^{-m_2 p} B \right)^{\frac{1}{m_2}}$. We can now base change the torsor equation of $X_{1,b} \rightarrow X_b$ to $X_{2,b}$ to obtain:
%\begin{equation*}\begin{split}
$(1+\pi^n Z_1)^p  =1+ \pi^{np} \sum_{i \in \mathbb{Z} } a_i T^i 
=1+ \pi^{np} \sum_{i \in \mathbb{Z} } a_i Z^{pi} \left( 1 - Z^{-m_2 p}B\right)^{\frac{i}{m_2}}  
=1+ \pi^{np} \sum_{i \in \mathbb{Z} } a_i Z^{pi} \left( 1 - \frac{i}{m_2}Z^{-m_2 p}B + ... \right) 
=1+ \pi^{np} \sum_{i \in \mathbb{Z} } a_i Z^{pi} 
\newline
-  \pi^{np} \sum_{i \in \mathbb{Z} } \frac{i a_i }{m_2}Z^{p(i-m_2)}B+ ... $. 
%\end{split}\end{equation*}
Partitioning any summation above over the index $i$ into the terms where $v_K(a_i)=0$ and the terms where $v_K(a_i)>0$ gives:
%\begin{equation*}\begin{split}
$(1+\pi^n Z_1)^p =1+ \pi^{np} \sum_{v_K(a_i)=0 } a_i Z^{pi} -  \pi^{np} \sum_{v_K(a_i)=0} \frac{i a_i }{m_2}Z^{p(i-m_2)}B 
+ \pi^{np} \sum_{v_K(a_i)>0 } a_i Z^{pi} -  \pi^{np} \sum_{v_K(a_i)>0} \frac{i a_i }{m_2}Z^{p(i-m_2)}B + ... $. 
%\end{split}\end{equation*}
Again, for the terms with $v_K(a_i)=0$, we can express $a_i = b_i^p + c_i$ for some $b_i, c_i \in R$ with $v_K(b_i)=0$ but $v_K(c_i)>0$ and so we have: 
%\begin{equation*}\begin{split}

$(1+\pi^n Z_1)^p  =1+ \pi^{np} \sum_{v_K(b_i)=0 } b_i^p Z^{pi} + \pi^{np} \sum_{v_K(c_i)>0 } c_i Z^{pi} 
-  \pi^{np} \sum_{v_K(b_i)=0} \frac{i b_i ^p}{m_2}Z^{p(i-m_2)} B - \pi^{np} \sum_{v_K(c_i)>0} \frac{i c_i }{m_2}Z^{p(i-m_2)} B 
+ \pi^{np} \sum_{v_K(a_i)>0 } a_i Z^{pi}
%\newline
- \pi^{np} \sum_{v_K(a_i)>0} \frac{i a_i }{m_2}Z^{p(i-m_2)} B$
 \newline
$ + \text{higher order terms}  
= 1+  \sum_{v_K(b_i)=0 } \left( \pi^{n} b_i Z^{i} \right)^p 
%\newline
-  \pi^{np} \sum_{v_K(b_i)=0} \frac{i b_i ^p}{m_2}Z^{p(i-m_2)} B 
+ \text{higher order terms}$;
%\end{split}\end{equation*}
excluding summations whose coefficients have positive valuation. 
%By the Multinomial Identity the first two terms can be expressed in the form 
%\[ \left(1 + \sum_{v_K(b_i)=0} \pi^n b_i   Z^{i} \right)^p - p \sum_{v_K(b_i)=0}  \pi^n b_i Z^{i} + \text{higher order terms}\] and so we can write:
%\begin{equation*}\begin{split}
%(1+\pi^n Z_1)^p & = \left(1 + \sum_{v_K(b_i)=0} \pi^n b_i   Z^{i} \right)^p - p \sum_{v_K(b_i)=0}  \pi^n b_i Z^{i} \\
%& \ \ \ \ \ \  \ \ \ \ \ -  \pi^{np} \sum_{v_K(b_i)=0} \frac{i b_i }{m_2}Z^{p(i-m_2)} B + \text{higher order terms}.\\
%\end{split}\end{equation*}
%Then by multiplying this $\mu_p$-torsor equation by the $p$-power \[ \left( 1+  \sum _{v_K(b_i)=0}
%\pi^{n} b_i Z^{i} \right)^{-p} = 1 - p \sum _{v_K(b_i)=0}\pi^{n} b_i Z^{i} +\text{higher order terms} \] 
Using the identity $(**)$, and after multiplying by a suitable $p$-power, yields:
%\begin{equation*}\begin{split}
\newline
$(1+\pi^n Z_1)^p =1 - p \sum_{v_K(b_i)=0}  \pi^n b_i Z^{i} -  \pi^{np} \sum_{v_K(b_i)=0} \frac{i b_i ^p}{m_2}Z^{p(i-m_2)} B 
+ \text{higher order terms}=1 - \pi^{v_K(p)+n} \sum_{i \geq l}  b_i Z^{i} 
 -  \pi^{np} \sum_{v_K(b_i)=0} \frac{i b_i }{m_2}Z^{p(i-m_2)} \left( \sum^{p-1}_{k=1} \binom{p}{k} (-1)^k (1+Z^{m_2})^k \right) 
%\newline
+ \text{higher order terms}
\newline
=1 - \pi^{v_K(p)+n} \sum_{i \geq l}  b_i Z^{i} +  \pi^{v_K(p)+np} \sum_{v_K(b_i)=0} \frac{i b_i }{m_2}Z^{p(i-m_2)} (1+Z^{m_2}) 
+ \text{higher order terms};$
%\end{split}\end{equation*}
up to multiplying the coefficients by units. Since $v_K(p)+n$ is the smallest exponent of $\pi$ and $l = \mathrm{min} \{ i | v_K(a_i) = 0, \mathrm{gcd}(i,p) =1 \} = m_1$ is the starting index, we have that $m'_2=m_1$ is the conductor variable.

\begin{center}
\line(1,0){250}
\end{center}

\textbf{5.}  $(\mu_p, \mu_p)$.
Here $m_1, m_2 \geq 0$. The first $\mu_p$-torsor is given by the equation $Z_1^p = u_1$  and the second $\mu_p$-torsor by $Z_2^p = u_2$ where $u_1, u_2 \in A^{\times}$.  Modulo $\pi$, these torsor equations reduce to $z_1^p = \bar{u}_1$ and $z_2^p= \bar{u}_2$ with acting group schemes $\mu_p$ on the special fibre. On the special fibre the reduced power series are of the form $\bar{u}=\sum_{i \geq l} \bar{a}_i t^i$ for some integer $l$. Depending on whether or not these $l$ are coprime to $p$ or not, there are three cases to consider. In particular, the pairs $(a1,a1)$, 
$(a1,a2)$ and $(a2,a2)$. We only treat the cases $(a1,a2)$ and $(a2,a2)$, the case $(a2,a1)$ is treated in a similar way to the case $(a1,a2)$. For the case $(a1,a1)$ see Remark 1.8.

\textbf{(a1,a2)} Here $m_1 = 0$ and $m_2 > 0$. We start by computing $m'_1$. We can choose the parameter $T$ so that $u_1 = T^{h}$ but $u_2 = 1 + \sum_{\substack{  v_K(a_i) = 0  \\   i \geq m_2  } } a_i T^i +  \sum_{v_K(a_i) > 0} a_i T^i$ remains as a power series. From the torsor equation $Z_1^p=T^h$, we can write $T =Z^p$ where $Z= Z_1^{1/h}$ is the parameter of $X_{1,b}$. Then we can base change the torsor equation of $X_{2,b} \rightarrow X_b$ to $X_{1,b}$ and obtain:
%\begin{equation*}\begin{split}
$Z_2^p =  1 + \sum_{\substack{  v_K(a_i) = 0  \\   i \geq m_2  } } a_i T^i +  \sum_{v_K(a_i) > 0} a_i T^i 
= 1 + \sum_{\substack{  v_K(a_i) = 0  \\   i \geq m_2  } }a_i Z^{pi} +  \sum_{v_K(a_i) > 0} a_i Z^{pi}.$  
%\end{split}\end{equation*}
For the terms with $v_K(a_i)=0$, we can express $a_i = b_i^p + c_i$ for some $b_i, c_i \in R$ with $v_K(b_i)=0$ and $v_K(c_i)>0$ and so we have:
%\begin{equation*}\begin{split}
$Z_2^p = 1 +  \sum_{\substack{  v_K(b_i) = 0  \\   i \geq m_2  } } b_i^p Z^{pi} +\sum_{\substack{  v_K(c_i) > 0  \\   i \geq m_2  } }c_i Z^{pi} +  \sum_{v_K(a_i) > 0} a_i Z^{pi}  
= 1 +\sum_{\substack{  v_K(b_i) = 0  \\   i \geq m_2  } } \left( b_i Z^{i} \right)^p + \sum_{\substack{  v_K(c_i) > 0  \\   i \geq m_2  } } c_i Z^{pi} +  \sum_{v_K(a_i) > 0} a_i Z^{pi}. $
%\end{split}\end{equation*}
%Then, by the Multinomial Identity, we can rewrite the first two terms and obtain an equation of the form:
%\begin{equation*}\begin{split}
%Z_2^p &= \left( 1 + \sum_{\substack{  v_K(b_i) = 0  \\   i \geq m_2  } }  b_i Z^{i} \right)^p - p \sum_{\substack{  v_K(b_i) = 0  \\   i \geq m_2  } }   b_i Z^{i} \\
%& \ \ \ \ \ \ + \sum_{\substack{  v_K(c_i) > 0  \\   i \geq m_2  } } c_i Z^{pi} +  \sum_{v_K(a_i) > 0} a_i Z^{pi} +  \text{higher order terms}. \\
%\end{split}\end{equation*}
%Multiplying this $\mu_p$-torsor equation by the $p$-power $\left( 1 +  \sum _{\substack{  v_K(b_i) = 0  \\   i \geq m_2  } }  
%b_i Z^{i} \right)^{-p}$ 
Using the identity $(**)$, and after multiplying by a suitable $p$-power, we get:
%\begin{equation*}\begin{split}
$Z_2^p = 1 - p  \sum_{\substack{  v_K(b_i) = 0  \\   i \geq m_2  } }  b_i Z^{i}+ \sum_{\substack{  v_K(c_i) > 0  \\   i \geq m_2  } } c_i Z^{pi} +  \sum_{v_K(a_i) > 0} a_i Z^{pi} +  \text{higher order terms} 
= 1 - \pi^{v_K(p)}  \sum_{\substack{  v_K(b_i) = 0  \\   i \geq m_2  } }  b_i Z^{i}+ \sum_{\substack{  v_K(c_i) > 0  \\   i \geq m_2  } } c_i Z^{pi} +  \sum_{v_K(a_i) > 0} a_i Z^{pi}$ 
\newline
$+  \text{higher order terms}, $
%\end{split}\end{equation*}
up to multiplying the coefficients by units.
Therefore, $m'_1 = m_2$. Here we ignored the last two summands in the above equality (cf. proof of case 4 computing $m_1'$).

Now we want to determine $m'_2$. We can choose the parameter $T$ so that $u_1 =\sum_{i \in \mathbb{Z}} a_i T^i$ is a power series and $u_2 =1 + T^{m_2}$ is in simplified form. We know from case 4 that the torsor equation $Z_2^p=1 + T^{m_2}$ gives rise to:  
$T =  Z^p \left( 1 - Z^{-m_2 p} \sum^{p-1}_{k=1} \binom{p}{k} (-1)^k (1+Z^{m_2})^k \right)^{\frac{1}{m_2}} $
where $Z := (Z_2 -1)^{1/m_2}$ is the parameter of $X_{2,b}$. We have that
%\begin{equation*}\begin{split}
$Z_1^p =  \sum_{i \in \mathbb{Z}} a_i T^i =  \sum_{\substack{ v_K(a_i)=0 \\   i \geq l  } } a_i T^i  +  \sum_{v_K(a_i)>0} a_i T^i $
%\end{split}\end{equation*}
where $l$ is such that $\mathrm{gcd}(l,p)=1$. We can write $T= Z^p  \left( 1 - Z^{-m_2 p} B \right)^{\frac{1}{m_2}}$ by letting $B=\sum^{p-1}_{k=1} \binom{p}{k} (-1)^k (1+Z^{m_2})^k$. Then we  
base change the torsor equation of $X_{1,b} \rightarrow X_b$ to $X_{2,b}$ and obtain:
%\begin{equation*}\begin{split}
$Z_1^p = \sum_{\substack{ v_K(a_i)=0 \\   i \geq l  } } a_i  Z^{ip}  \left( 1 - Z^{-m_2 p} B \right)^{\frac{i}{m_2}} +  \sum_{v_K(a_i)>0} a_i  Z^{ip}  \left( 1 - Z^{-m_2 p} B \right)^{\frac{i}{m_2}}.$
%\end{split}\end{equation*}
For the terms where $v_K(a_i)=0$, we can express $a_i = b_i^p + c_i$ for some $b_i, c_i \in R$ with $v_K(b_i)=0$ and $v_K(c_i)>0$ and so we have:
%\begin{equation*}\begin{split}

\noindent
$Z_1^p =  \sum_{\substack{ v_K(b_i)=0 \\   i \geq l  } }b^p_i  Z^{ip}  \left( 1 - Z^{-m_2 p} B \right)^{\frac{i}{m_2}} +  \sum_{\substack{ v_K(c_i)>0 \\ i \geq l}} c_i  Z^{ip}  \left( 1 - Z^{-m_2 p} B \right)^{\frac{i}{m_2}}$ 
\newline
$+  \sum_{v_K(a_i)>0} a_i  Z^{ip}  \left( 1 - Z^{-m_2 p} B \right)^{\frac{i}{m_2}}=
\sum_{\substack{ v_K(b_i)=0 \\   i \geq l  } } b^p_i  Z^{ip}  \left( 1 - Z^{-m_2 p} B \right)^{\frac{i}{m_2}} $
\newline
$+  \sum_{v_K(d_i)>0} d_i  Z^{ip}  \left( 1 - Z^{-m_2 p} B \right)^{\frac{i}{m_2}},$
%\end{split}\end{equation*}
where $d_i = c_i $ if $v_K(a_i)=0$ and $d_i=a_i$ for $v_K(a_i)>0$. 
Hence:
%\begin{equation*}\begin{split}
$Z_1^p =   \sum_{\substack{ v_K(b_i)=0 \\   i \geq l  } } b^p_i  Z^{ip}  \left( 1 - \frac{i}{m_2} Z^{-m_2 p} B + ... \right) 
%\newline
+  \sum_{v_K(d_i)>0} d_i  Z^{ip}  \left( 1 - \frac{i}{m_2} Z^{-m_2 p} B + ... \right) 
=  \sum_{\substack{ v_K(b_i)=0 \\ i \geq l} } b^p_i  Z^{ip}  -  \sum_{\substack{ v_K(b_i)=0 \\ i \geq l} }b^p_i  Z^{ip} \frac{i}{m_2} Z^{-m_2 p} B 
+  \sum_{v_K(d_i)>0} d_i  Z^{ip} - \sum_{v_K(d_i)>0} d_i  Z^{ip} \frac{i}{m_2} Z^{-m_2 p} B +  \text{higher order terms}. $
%\end{split}\end{equation*}

Taking into factor the $p$-power $b_l^p Z^{lp} = \left( b_l Z^l \right)^p$ we obtain:
%\begin{equation*}\begin{split}
$Z_1^p  =  1+ \sum_{\substack{ v_K(b_i)=0 \\   i \geq l  } } b_l^{-p} b^{p}_i  Z^{p(i-l)}  -  \sum_{\substack{ v_K(b_i)=0 \\ i \geq l}} b_l^{-p}  b^p_i  Z^{p(i-l)} \frac{i}{m_2} Z^{-m_2 p} B 
+  \sum_{v_K(d_i)>0 } b_l^{-p} d_i  Z^{p(i-l)} - \sum_{v_K(d_i)>0 } b_l^{-p} d_i  Z^{p(i-l)} \frac{i}{m_2} Z^{-m_2 p} B 
+  \text{higher order terms}  
=  1+  \sum_{\substack{ v_K(b_i)=0 \\   i \geq l  } }\left( b_l^{-1} b_i  Z^{i-l} \right)^p -  \sum_{\substack{ v_K(b_i)=0 \\   i \geq l  } } b_l^{-p}  b^p_i  Z^{p(i-l)} \frac{i}{m_2} Z^{-m_2 p} B$ 
\newline
$+  \sum_{v_K(d_i)>0} b_l^{-p} d_i  Z^{p(i-l)} - \sum_{v_K(d_i)>0} b_l^{-p} d_i  Z^{p(i-l)} \frac{i}{m_2} Z^{-m_2 p} B 
+  \text{higher order terms}. $
%\end{split}\end{equation*}
%The first two terms $1+ \sum \left( b_l^{-1} b_i  Z^{i-l} \right)^p$ can be expressed by the Multinomial Identity as 
%\[  \left( 1+ \sum_{ \substack{ v_K(b_i)=0 \\ i > l}} b_l^{-1} b_i  Z^{i-l} \right)^{p} - p \sum_{\substack {v_K(b_i)=0 \\ i > l}} b_l^{-1} b_i  Z^{i-l} +  \text{higher order terms}. \]
%We can then proceed to write:
%\begin{equation*}\begin{split}
%Z_1^p & =   \left( 1+ \sum_{\substack{v_K(b_i)=0 \\ i > l}} b_l^{-1} b_i  Z^{i-l} \right)^{p} - p \sum_{\substack{ v_K(b_i)=0 \\  i > l}} b_l^{-1} b_i  Z^{i-l} \\
%& \ \ \ \ \ \ \ \  -  \sum_{\substack{v_K(b_i)=0 \\ i \geq l}}
%b_l^{-p}  b^p_i  Z^{p(i-l)} \frac{i}{m_2} Z^{-m_2 p} B  +  \sum_{ v_K(d_i)>0} b_l^{-p} d_i  Z^{p(i-l)} \\
%& \ \ \ \ \ \ \ \   - \sum_{  v_K(d_i)>0 } b_l^{-p} d_i  Z^{p(i-l)} \frac{i}{m_2} Z^{-m_2 p} B +  \text{higher order terms}  \\
%\end{split}\end{equation*}
%Multiplying by the inverse $p$-power
%\[ \left( 1+ \sum_{\substack{ v_K(b_i)=0 \\ i > l}} b_l^{-1} b_i  Z^{i-l} \right)^{-p} = 1 - p  \sum_{\substack{ v_K(b_i)=0 \\ i > l}} b_l^{-1} b_i  Z^{i-l} + \text{higher order terms} \]
Using the identity $(**)$, and after multiplying by a suitable $p$-power, we get
%\begin{equation*}\begin{split}
$
Z_1^p =   1 - p \sum_{\substack{ v_K(b_i)=0 \\ i > l}} b_l^{-1} b_i  Z^{i-l} -  \sum_{\substack{ v_K(b_i)=0 \\ i \geq l}} b_l^{-p}  b^p_i  Z^{p(i-l)} \frac{i}{m_2} Z^{-m_2 p} B 
+  \sum_{ v_K(d_i)>0 } b_l^{-p} d_i  Z^{p(i-l)} - \sum_{v_K(d_i)>0 } b_l^{-p} d_i  Z^{p(i-l)} \frac{i}{m_2} Z^{-m_2 p} B
+  \text{higher order terms}  
=   1 - p \sum_{\substack{ v_K(b_i)=0 \\ i > l}} b_l^{-1} b_i  Z^{i-l}
-  \sum_{\substack{ v_K(b_i)=0 \\ i \geq l}} b_l^{-p}  b^p_i  Z^{p(i-l)} \frac{i}{m_2} Z^{-m_2 p} \left( \sum^{p-1}_{k=1} \binom{p}{k} (-1)^k (1+Z^{m_2})^k \right)$ 
\newline
$+  \text{higher order terms}  
= 1 - \pi^{v_k(p)} \sum_{\substack{ v_K(b_i)=0 \\ i > l}} b_l^{-1} b_i  Z^{i-l} 
+  \pi^{v_k(p)} \sum_{\substack{ v_K(b_i)=0 \\ i \geq l}} b_l^{-p}  b^p_i \frac{i}{m_2} Z^{p(i-l)-m_2 p} (1+Z^{m_2}) +  \text{higher order terms}$,
%\end{split}\end{equation*}
up to multiplying the coefficients by units. Then, clearly, $m'_2 = -m_2(p-1)$ is the conductor variable. Here we ignored the term
$\sum_{ v_K(d_i)>0 } b_l^{-p} d_i  Z^{p(i-l)}$ in the above summation (cf. proof of case 4 computing $m_1'$). 

\textbf{(a2,a2)} Here both $m_1, m_2 > 0$ and we start by computing $m'_1$. The parameter $T$ can be chosen so that $u_1 =1+ T^{m_1}$ but $u_2 = \sum_{i \in \mathbb{Z}} a_i T^i$ 
remains as a power series. From the torsor equation $Z_1^p=1 + T^{m_1}$, we know from case 4 that we can write $T$ as: 
$T =  Z^p \left( 1 - Z^{-m_1 p} \sum^{p-1}_{k=1} \binom{p}{k} (-1)^k (1+Z^{m_1})^k \right)^{\frac{1}{m_1}} $
where $Z := (Z_1-1)^{1/m_1}$ is the parameter of $X_{1,b}$. 
%Again, we know this is the correct parameter because this is what we obtained on the special fibre. 
As before, for purposes of convenience, we write $T=Z^p \left( 1 - Z^{-m_1 p} B \right)^{\frac{1}{m_1}}$ where $B =  \sum^{p-1}_{k=1} \binom{p}{k} (-1)^k (1+Z^{m_1})^k $. Then we can base change the torsor equation for $X_{2,b} \rightarrow X_b$ to $X_{1,b}$ and obtain:
%\begin{equation*}\begin{split}
$Z_2^p =  1 +  \sum_{ \substack{  v_K(a_i) = 0 \\ i \geq m_2}} a_i T^i +  \sum_{v_K(a_i) > 0} a_i T^i 
= 1 + \sum_{ \substack{  v_K(a_i) = 0 \\ i \geq m_2}} a_i Z^{ip} \left( 1 - Z^{-m_1 p} B \right)^{\frac{i}{m_1}} +  \sum_{v_K(a_i) > 0} a_i Z^{ip} \left( 1 - Z^{-m_1 p} B \right)^{\frac{i}{m_1}} 
= 1 +  \sum_{ \substack{  v_K(a_i) = 0 \\ i \geq m_2}} a_i Z^{ip} \left( 1 - \frac{i}{m_1}Z^{-m_1 p} B +...\right)$ 
\newline
$+  \sum_{v_K(a_i) > 0} a_i Z^{ip} \left( 1 -\frac{i}{m_1} Z^{-m_1 p} B+... \right) 
= 1 +  \sum_{ \substack{  v_K(a_i) = 0 \\ i \geq m_2}} a_i Z^{ip} - \sum_{ \substack{  v_K(a_i) = 0 \\ i \geq m_2}} a_i Z^{ip} \frac{i}{m_1}Z^{-m_1 p} B +  \sum_{v_K(a_i) > 0} a_i Z^{ip}  -  \sum_{v_K(a_i) > 0} a_i Z^{ip} \frac{i}{m_1} Z^{-m_1 p} B+ \text{higher order terms}. $
%\end{split}\end{equation*}
For the terms where $v_K(a_i)=0$, we can express $a_i = b_i^p + c_i$ for some $b_i, c_i \in R$ with $v_K(b_i)=0$ and $v_K(c_i)>0$ to obtain:
%\begin{equation*}\begin{split}
$Z_2^p = 1 + \sum_{ \substack{  v_K(b_i) = 0 \\ i \geq m_2 }} b_i^p  Z^{ip} - \sum_{ \substack{   v_K(b_i) = 0 \\ i \geq m_2}} b_i^p Z^{ip} \frac{i}{m_1}Z^{-m_1 p} B
+ \sum_{v_K(c_i) > 0} c_i Z^{ip} 
%\newline
- \sum_{v_K(c_i) > 0} c_i Z^{ip} \frac{i}{m_1}Z^{-m_1 p} B  
+  \sum_{v_K(a_i) > 0} a_i Z^{ip}  -  \sum_{v_K(a_i) > 0} a_i Z^{ip} \frac{i}{m_1} Z^{-m_1 p} B+ \text{higher order terms} 
= 1 + \sum_{ \substack{   v_K(b_i) = 0 \\  i \geq m_2} } \left( b_i  Z^{i} \right)^p - \sum_{\substack{   v_K(b_i) = 0 \\ i \geq m_2} } b_i^p Z^{ip} \frac{i}{m_1}Z^{-m_1 p} B + \sum_{v_K(d_i) > 0} d_i Z^{ip} - \sum_{v_K(d_i) > 0} d_i Z^{ip} \frac{i}{m_1}Z^{-m_1 p} B + \text{higher order terms}$,
%\end{split}\end{equation*}
where we take $d_i=c_i$ if $v_K(a_i)=0$ and $d_i=a_i$ with $v_K(a_i)>0$. 
%Now, by the Multinomial Theorem, we can write the first two terms 
%$1 + \sum _{ \substack{   v_K(b_i) = 0 \\  i \geq m_2}}\left( b_i  Z^{i} \right)^p$ as:
%\[ \left( 1 + \sum_{ \substack{   v_K(b_i) = 0 \\  i \geq m_2} } b_i  Z^{i} \right)^p - p \sum_{ \substack{   v_K(b_i) = 0 \\ i \geq m_2} } b_i  Z^{i} +  \text{higher order terms}  \]
%so that the torsor equation can now be expressed as
%\begin{equation*}\begin{split}
%Z_2^p & =  \left( 1 + \sum_{ \substack{   v_K(b_i) = 0 \\ i \geq m_2}} b_i  Z^{i} \right)^p - p \sum_{\substack{   v_K(b_i) = 0 \\  i \geq m_2}} b_i  Z^{i} - \sum_{ \substack{  v_K(b_i) = 0 \\ i \geq m_2}} b_i^p Z^{ip} \frac{i}{m_1}Z^{-m_1 p} B \\ 
%& \ \ \ \ \ \ \ + \sum_{v_K(d_i) > 0} d_i Z^{ip} - \sum_{v_K(d_i) > 0} d_i Z^{ip} \frac{i}{m_1}Z^{-m_1 p} B + \text{higher order terms}.  \\
%\end{split}\end{equation*}
%Then, after multiplying by the inverse $p$-power, 
%\[\left( 1 + \sum_{ \substack{  v_K(b_i) = 0 \\ i \geq m_2}} b_i  Z^{i} \right)^{-p} = 1 - p\sum_{\substack{  v_K(b_i) = 0 \\ i \geq m_2} } b_i  Z^{i} + \text{higher order terms}. \] 
%we have (we ignore the summand $\sum_{v_K(d_i) > 0} d_i Z^{ip}$ in the above sum):
Using the identity $(**)$, and after multiplying by a suitable $p$-power, we get:
%\begin{equation*}\begin{split}
$Z_2^p = 1 - p \sum_{\substack{  v_K(b_i) = 0 \\ i \geq m_2} } b_i  Z^{i} 
%\newline
- \sum_{\substack{  v_K(b_i) = 0 \\ i \geq m_2} } b_i^p Z^{ip} \frac{i}{m_1}Z^{-m_1 p} B + \text{higher order terms}  = 1 - p \sum_{ \substack{  v_K(b_i) = 0 \\ i \geq m_2} } b_i  Z^{i} 
%\newline
- \sum_{ \substack{  v_K(b_i) = 0 \\ i \geq m_2}} b_i^p Z^{ip} \frac{i}{m_1}Z^{-m_1 p} \left( \sum^{p-1}_{k=1} \binom{p}{k} (-1)^k (1+Z^{m_1})^k \right)
+ \text{higher order terms}.$
%\end{split}\end{equation*}
Then up to multiplying the coefficients by units we have:
%\begin{equation*}\begin{split}

$Z_2^p = 1 - \pi^{v_K(p)} \sum_{ \substack{  v_K(b_i) = 0 \\ i \geq m_2} } b_i  Z^{i} + \pi^{v_K(p)} \sum_{ \substack{  v_K(b_i) = 0 \\ i \geq m_2}} b_i^p  \frac{i}{m_1}Z^{(i-m_1)p} (1+Z^{m_1}) 
+ \text{higher order terms}. $
%\end{split}\end{equation*}

In order to determine which is the smallest power of $Z$ and hence the conductor variable $m'_1$, we need to compare $m_2$ with $m_2p-m_1(p-1)$ since both summations have coefficients with the same $\pi$ valuation. Note that $m_2 \leq m_2p - m_1(p-1)$ is equivalent to $m_1 \leq m_2$. Assuming $m_1 \leq m_2$, we have $m_1' = m_2$ and, by symmetry, $m_2' = m_1p -m_2(p-1)$.
The case $m_1\leq m_2$ is entirely similar.

\begin{center}
\line(1,0){250}
\end{center}

\textbf{6.} $(\mathcal{H}_{n_1}, \mathcal{H}_{n_2})$.
Here $m_1, m_2 \in \mathbb{Z}$  and both are coprime to $p$.  The $\mathcal{H}_{n_1}$-torsor equation is given by $(\pi^{n_1} Z_1 + 1)^p = 1+ \pi^{pn_1}u_1$ and the 
$\mathcal{H}_{n_2}$-torsor equation by $(\pi^{n_2} Z_2 + 1)^p = 1+ \pi^{pn_2}u_2 $ where $u_1, u_2 \in A^{\times}$.  The two torsors have associated conductor variables $m_1, m_2$ respectively.

We begin by computing $m'_1$. We can choose the parameter $T$ so that $u_1 = T^{m_1}$ but $u_2 = \sum_{i \in \mathbb{Z}} a_i T^i$ remains as a power series. By case 4, we can express $T$ in terms of $Z:=Z_1^{1/m_1}$, the parameter for $X_{1,b}$, as:
$T=Z^p \left( 1+  \sum_{k=1}^{p-1} \binom{p}{k} \pi^{-n_1(p-k)} Z^{-m_1(p-k)} \right)^{\frac{1}{m_1}} $
and, for convenience, we set $B=\sum_{k=1}^{p-1} \binom{p}{k} \pi^{-n_1(p-k)} Z^{-m_1(p-k)}$ so that we can write $T=Z^p \left( 1+  B \right)^{\frac{1}{m_1}}$. Then we base change the torsor equation for $X_{2,b} \rightarrow X_b$ to $X_{1,b}$ to obtain:
%\begin{equation*}\begin{split}
$Z_2^p  = 1+  \pi^{pn_2} \sum_{i \in \mathbb{Z}} a_i T^i 
= 1+  \pi^{pn_2} \sum_{ \substack{  v_K(a_i) = 0 \\ i \geq m_2}} a_i T^i + \pi^{pn_2} \sum_{v_K(a_i) > 0} a_i T^i 
= 1+  \pi^{pn_2} \sum_{ \substack{  v_K(a_i) = 0 \\ i \geq m_2}}  a_i Z^{ip} \left( 1+  B \right)^{\frac{i}{m_1}}$
\newline
$+ \pi^{pn_2} \sum_{v_K(a_i) > 0} a_i Z^{ip} \left( 1+  B \right)^{\frac{i}{m_1}}
= 1+  \pi^{pn_2} \sum_{ \substack{  v_K(a_i) = 0 \\ i \geq m_2}}  a_i Z^{ip} \left( 1+ \frac{i}{m_1} B + ... \right) $
\newline
$+ \pi^{pn_2} \sum_{v_K(a_i) > 0} a_i Z^{ip} \left( 1+ \frac{i}{m_1}  B + ... \right)
= 1+  \pi^{pn_2}  \sum_{ \substack{  v_K(a_i) = 0 \\ i \geq m_2}}  a_i Z^{ip}+  \pi^{pn_2} \sum_{ \substack{  v_K(a_i) = 0 \\ i \geq m_2}}  a_i Z^{ip} \frac{i}{m_1} B 
+  \pi^{pn_2} \sum_{v_K(a_i) > 0} a_i Z^{ip} +  \pi^{pn_2} \sum_{v_K(a_i) > 0} a_i Z^{ip}  \frac{i}{m_1}  B
+\text{higher order terms}. $
%\end{split}\end{equation*}
For the terms where $v_K(a_i)=0$, we can express $a_i = b_i^p + c_i$ for some $b_i, c_i \in R$ with $v_K(b_i)=0$ and $v_K(c_i)>0$ to obtain:
%\begin{equation*}\begin{split}
$Z_2 ^p = 1+  \pi^{pn_2} \sum_{ \substack{ v_K(b_i) = 0 \\ i \geq m_2}} b_i^p Z^{ip}+  \pi^{pn_2} \sum_{ \substack{ v_K(b_i) = 0 \\ i \geq m_2}} b^p_i Z^{ip} \frac{i}{m_1} B 
+  \pi^{pn_2} \sum_{v_K(c_i) > 0} c_i Z^{ip}+  \pi^{pn_2} \sum_{v_K(c_i) > 0} c_i Z^{ip} \frac{i}{m_1} B 
+  \pi^{pn_2} \sum_{v_K(a_i) > 0} a_i Z^{ip} +  \pi^{pn_2} \sum_{v_K(a_i) > 0} a_i Z^{ip}  \frac{i}{m_1}  B + \text{higher order terms} 
= 1+   \sum_{ \substack{ v_K(b_i) = 0 \\ i \geq m_2}} 
\left( \pi^{n_2} b_i Z^{i} \right)^p+  \pi^{pn_2}  \sum_{ \substack{ v_K(b_i) = 0 \\ i \geq m_2}} b^p_i Z^{ip} \frac{i}{m_1} B 
+  \pi^{pn_2} \sum_{v_K(d_i) > 0} d_i Z^{ip}+  \pi^{pn_2} \sum_{v_K(d_i) > 0} d_i Z^{ip} \frac{i}{m_1} B 
+ \text{higher order terms},$
%\end{split}\end{equation*}
where $d_i = c_i$ if $v_K(a_i)=0$ and $d_i=a_i$ if $v_K(a_i)>0$. 
%By the Multinomial Identity, we can express the summation $1+   \sum 
%_{ \substack{ v_K(b_i) = 0 \\ i \geq m_2}}\left( \pi^{n_2} b_i Z^{i} \right)^p$ as follows
%\[ \left( 1+    \sum_{ \substack{ v_K(b_i) = 0 \\ i \geq m_2}} \pi^{n_2} b_i Z^{i} \right)^p - p \sum_{ \substack{ v_K(b_i) = 0 \\ i \geq m_2}}  \pi^{n_2} b_i Z^{i} +\text{higher order terms} \] and then by substitution we obtain an equation of the form:
%\begin{equation*}\begin{split}
%Z_2 ^p & = \left( 1+    \sum_{ \substack{ v_K(b_i) = 0 \\ i \geq m_2}}  \pi^{n_2} b_i Z^{i} \right)^p - p \sum_{ \substack{ v_K(b_i) = 0 \\ i \geq m_2}}  \pi^{n_2} b_i Z^{i}  \\ 
%& \ \ \ \ \ \ \ \ +  \pi^{pn_2}  \sum_{ \substack{ v_K(b_i) = 0 \\ i \geq m_2}} b^p_i Z^{ip} \frac{i}{m_1} B  +  \pi^{pn_2} \sum_{v_K(d_i) > 0} d_i Z^{ip} \\
%&  \ \ \ \ \ \ \ \ +  \pi^{pn_2} \sum_{v_K(d_i) > 0} d_i Z^{ip} \frac{i}{m_1} B+\text{higher order terms}.  \\
%\end{split}\end{equation*}
%Multiplying by the inverse $p$-power $\left( 1+   \sum  \pi^{n_2} b_i Z^{i} \right)^{-p}$ gives rise to a an equation (we ignore the summand $\pi^{pn_2} \sum_{v_K(d_i) > 0} d_i Z^{ip}$ in the above sum):
Using the identity $(**)$, and after multiplying by a suitable $p$-power, we get:
%\begin{equation*}\begin{split}

$(\pi^{n_2} Z_2 + 1)^p = 1 - p  \sum_{ \substack{ v_K(b_i) = 0 \\ i \geq m_2}}  \pi^{n_2} b_i Z^{i}  
+  \pi^{pn_2} \sum_{ \substack{ v_K(b_i) = 0 \\ i \geq m_2}} b^p_i Z^{ip} \frac{i}{m_1} B  +  \text{higher order terms} 
= 1 - p \sum_{ \substack{ v_K(b_i) = 0 \\ i \geq m_2}}  \pi^{n_2} b_i Z^{i}  
+  \pi^{pn_2}  \sum_{ \substack{ v_K(b_i) = 0 \\ i \geq m_2}} b^p_i Z^{ip} \frac{i}{m_1} \left( \sum_{k=1}^{p-1} \binom{p}{k} \pi^{n_1(k-p)} Z^{-m_1(p-k)} \right)  
+  \text{higher order terms}. $
%\end{split}\end{equation*}
Then up to multiplying the coefficients by units we have:
%\begin{equation*}\begin{split}
$Z_2 ^p = 1 - \pi^{v_K(p)+n_2}   \sum_{ \substack{ v_K(b_i) = 0 \\ i \geq m_2}} b_i Z^{i} + \pi^{v_K(p)+pn_2-n_1(p-1)} \sum_{ \substack{ v_K(b_i) = 0 \\ i \geq m_2}} b^p_i Z^{ip} \frac{i}{m_1}  Z^{-m_1(p-1)} 
+  \text{higher order terms}.$
%\end{split}\end{equation*}

We have to compare $v_K(p)+n_2$ with $v_K(p)+pn_2-n_1(p-1)$ in order to determine the smallest power of $\pi$. Note that $v_K(p)+n_2 < v_K(p)+pn_2-n_1(p-1)$ is equivalent to $n_1 < n_2$ and so when this happens, $m'_1 = m_2$ and when $n_1 > n_2$, we have $m'_1 = m_2p-m_1(p-1)$. We also need to consider the case where $n_1 = n_2$. Comparing $m_2$ with $m_2p-m_1(p-1)$ we have that:
\[m'_1= \mathrm{min} \{ m_2  , m_2p-m_1(p-1) \} = \begin{cases}  m_2 & \text{if $m_1 < m_2$} \\   m_2p-m_1(p-1) & \text{if $m_1 \geq m_2$} \end{cases}\]
Now we want to determine $m'_2$ but by the symmetry present in this case this is entirely similar to the above consideration. In particular, if $n_1 < n_2$ then $m'_2 = m_1p-m_2(p-1) $, if $n_1 > n_2$ then $m'_2 =m_1$ and, finally, if $n_1 = n_2$ then:
\[m'_2= \mathrm{min} \{ m_2  , m_1p-m_2(p-1) \} = \begin{cases} m_1p-m_2(p-1) & \text{if $m_1 < m_2$} \\   m_1 & \text{if $m_1 \geq m_2$} \end{cases}\]

\begin{center}
\line(1,0){250}
\end{center}

All six possible cases have now been treated.
\end{proof}

We are also able to state when the Galois cover $Y_b\to X_b$ has a torsor structure by taking into account when base changing in the above proof without additional modification resulted in the equation of the normalisation (see also Theorem 3.4).

\begin{thmB} Let $f_i :  X_{i,b} \rightarrow X_b$ be non-trivial Galois covers of degree $p$ above the formal boundary $X_b$ which are generically disjoint for $i=1,2$. Let $G_i$ be the 
corresponding group schemes for $i=1,2$ and let $Y_b$ be as defined in Theorem 1.1. Then $Y_b = X_{1,b} \times_{X_b} X_{2,b}$, in which case  $Y_b \rightarrow X_b$ is a torsor under $G_1 \times _{\Spec R} G_2$, if and only if at least one of the two group schemes $G_i$ is the \'{e}tale group scheme $\mathcal{H}_{v_K(\lambda)}$.
\end{thmB}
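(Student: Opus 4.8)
The plan is to translate the biconditional into a statement about reducedness of a tensor product of residue fields, and then to read off the answer from the explicit forms recalled in the Background. Write $A=\Gamma(X_b,\mathcal O)$, $A_i=\Gamma(X_{i,b},\mathcal O)$ (complete DVRs with uniformiser $\pi$), $B':=A_1\otimes_A A_2$, and $M:=\operatorname{Frac}(A_1)\otimes_{\operatorname{Frac}(A)}\operatorname{Frac}(A_2)$, which is a field of degree $p^2$ over $\operatorname{Frac}(A)$ by generic disjointness. Since $A_i$ is $A$-flat, $B'$ injects into $M$, hence is a domain; being finite over the complete local ring $A$ it is a complete local Noetherian domain of dimension $1$. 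By the hypothesis that the ramification index is $1$ we have $L_i:=A_i/\pi\cong k((t))$ if $f_i$ is residually trivial and otherwise is a degree $p$ extension of $k((t))$, and $B'/\pi B'=L_1\otimes_{k((t))}L_2$. By definition $Y_b=\mathrm{Spf}(B)$, where $B$ is the integral closure of $B'$ in $M$, a complete DVR with uniformiser $\pi$. Thus $Y_b=X_{1,b}\times_{X_b}X_{2,b}$ iff $B'=B$ iff the one-dimensional local domain $B'$ is normal, i.e. a DVR; and since $\pi\in\mathfrak m_{B'}$, this happens iff $\mathfrak m_{B'}=(\pi)$, i.e. iff the Artinian local ring $B'/\pi B'=L_1\otimes_{k((t))}L_2$ is a field, i.e. iff it is reduced.

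For the ``if'' direction, suppose say $G_1=\mathcal H_{v_K(\lambda)}$. Then $f_1$ is an \'etale torsor, so $L_1/k((t))$ is a finite separable extension, hence $L_1\otimes_{k((t))}L_2$ is reduced and the first paragraph gives $Y_b=X_{1,b}\times_{X_b}X_{2,b}$. (Equivalently, and exactly as in the opening paragraph of the proof of Theorem 1.1: $X_{1,b}\times_{X_b}X_{2,b}\to X_{2,b}$ is finite \'etale with source a domain, hence the source is a DVR, hence normal.) The ``in which case'' clause is then formal: a fibre product of a $G_1$-torsor and a $G_2$-torsor over $X_b$ is canonically a torsor under $G_1\times_{\Spec R}G_2$.

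For the converse I would argue the contrapositive. If neither $G_i$ is $\mathcal H_{v_K(\lambda)}$, then $G_i\in\{\mu_p,\mathcal H_n\}$ for both $i$, and by the Background descriptions the induced cover $(X_{i,b})_k\to(X_b)_k$ is a non-trivial $\mu_p$- (resp.\ $\alpha_p$-) torsor given by an equation $z^p=\bar u_i$ with $\bar u_i\notin k((t))^p$; hence $L_i=k((t))(\bar u_i^{1/p})$ is purely inseparable of degree $p$ over $k((t))$. The key computation is $L_2^{\,p}=k((t))$: one has $L_2^{\,p}=k((t))^p[\bar u_2]$, a field with $k((t))^p\subsetneq k((t))^p[\bar u_2]\subseteq k((t))$, and since $[k((t)):k((t))^p]=p$ (as $k$ is perfect) equality must hold. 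Therefore $\bar u_1\in k((t))=L_2^{\,p}$, say $\bar u_1=\delta^p$ with $\delta\in L_2$, whence
$$L_1\otimes_{k((t))}L_2\;\cong\;L_2[z]/(z^p-\bar u_1)\;=\;L_2[z]/\bigl((z-\delta)^p\bigr)$$
is non-reduced. By the first paragraph this forces $B'\neq B$, i.e. $Y_b\neq X_{1,b}\times_{X_b}X_{2,b}$.

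The main obstacle is this converse direction, and inside it the identification $L_2^{\,p}=k((t))$ and the resulting non-reducedness of $L_1\otimes_{k((t))}L_2$ — everything else is either formal (fibre products of torsors; normalisation over a complete DVR) or already carried out in the proof of Theorem 1.1. I would also take care to verify that the standing hypotheses of Theorem 1.1 (generic disjointness, ramification index $1$, reducedness of $(Y_b)_k$) are precisely what make $B'$ a domain flat over $R$ whose normalisation is a DVR with uniformiser $\pi$, so that the reduction in the first paragraph to ``$L_1\otimes_{k((t))}L_2$ reduced'' is legitimate.
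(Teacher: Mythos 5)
Your proof is correct, and its overall architecture coincides with the paper's: the paper proves Theorem 1.2 by appeal to Theorems 3.4 and 3.5, i.e.\ it first establishes that $Y_b=X_{1,b}\times_{X_b}X_{2,b}$ holds iff the special fibre of the fibre product is reduced, and then shows non-reducedness when both $G_i$ are non-\'etale. Your first paragraph is a self-contained, local reproof of the relevant equivalence from Theorem 3.4 (one-dimensional complete local domain, normal iff DVR, iff $B'/\pi B'=L_1\otimes_{k((t))}L_2$ is a field iff reduced), and your ``if'' direction matches the paper's base-change argument. Where you genuinely diverge is the key step of the converse: the paper works with the explicit simplified torsor equation $Z_2^p=1+\pi^{np}T^m$, reduces it to $z_2'^p=t^m$ with $\gcd(m,p)=1$, and concludes that $t$ (hence $\overline{f(T)}$) becomes a $p$-power on $(\mathcal X_2)_k$, with a case analysis over the possible pairs $((G_1)_k,(G_2)_k)$ left to the reader. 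You replace this with the intrinsic observation that any purely inseparable degree-$p$ extension $L_2$ of $k((t))$ satisfies $L_2^p=k((t))$, because $k((t))^p\subsetneq k((t))^p[\bar u_2]\subseteq k((t))$ and $[k((t)):k((t))^p]=p$ is prime. This buys you uniformity (no case split on which of $\mu_p,\alpha_p$ occurs residually, no appeal to the simplified form or to $\gcd(m,p)=1$) at the cost of nothing; it is a cleaner rendering of the same underlying fact that $\bar u_1$ becomes a $p$-power in $L_2$, forcing $L_1\otimes_{k((t))}L_2\cong L_2[z]/((z-\delta)^p)$ to be non-reduced. Your closing caveat is well placed: the hypotheses of Theorem 1.1 (ramification index $1$, $(Y_b)_k$ reduced) are exactly what guarantee $[L_i:k((t))]=p$ and that the normalisation $B$ is a DVR with uniformiser $\pi$, and these are needed for the reduction in your first paragraph to be valid.
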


\begin{proof} See Theorem 3.5.
\end{proof}
%Note that Theorem B in [Saidi2] asserts the existence of a torsor structure under $G_1 \times G_2$ on $Y_b \rightarrow X_b$ if and only if $Y_b = X_{1,b} \times_{X_b} X_{2,b}$. The remaining argument here relates to the proof of Theorem A where the six possible cases are treated individually.

%($\Leftarrow$)  Explained at the start of the proof of Theorem 1.1 when justifying working at the level of special fibres when at least one of the group schemes is \'{e}tale.

%($\Rightarrow$) By contrapositive, suppose that neither of the group schemes are \'{e}tale; namely the cases 4-6 in the proof of Theorem 1.1. The proof in Theorem 1.1 (related to these cases) reveals that after base change  in each of these cases one obtains an equation which when reduced (after a possible suitable change of parameter) modulo $\pi$ would be a $p$-power. In particular, on the special fibre one has a torsor equation of the form  $z^p=\bar{f}=\bar{g}^p \Leftrightarrow \left( z - \bar{g} \right)^p = 0$ with $z -\bar{g} \not=0$, which leads to a non reduced special fibre, hence contradicts the fact that  the special fibre of $Y_b$ is reduced and the equality $Y_b = X_{1,b} \times_{X_b} X_{2,b}$ doesn't hold in this case. 

\begin{defA} For the extension $B/A$ of DVR's where $X_b =\mathrm{Spf} \left( A \right)$ and $Y_b =\mathrm{Spf} \left( B \right)$ are as in Theorem 1.1, we define the \textbf{special different(s)} by
\[d_{s_1} = (c_1 - 1)p(p-1) + (c'_1-1)(p-1)\]
\[d_{s_2} = (c_2 - 1)p(p-1) + (c'_2-1)(p-1)\]
\end{defA}

\begin{lemA}The above two special differents are in fact equal: $d_{s_1} = d_{s_2}$.
\end{lemA}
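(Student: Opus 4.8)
The plan is to strip the statement down to a single numerical identity between the conductor variables and then read that identity off directly from Theorem 1.1.

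\emph{Step 1 (reduction).} Recall from the Background that the conductor and the conductor variable of a degree $p$ torsor are related by $c=-m$, so here $c_i=-m_i$ and $c_i'=-m_i'$ for $i=1,2$. Pulling $(p-1)$ out of each definition,
\[ d_{s_i}=(p-1)\big((c_i-1)p+(c_i'-1)\big)=(p-1)\big(pc_i+c_i'-p-1\big), \]
so $d_{s_1}=d_{s_2}$ is equivalent to $pc_1+c_1'=pc_2+c_2'$, and, substituting $c=-m$, to
\[ p\,m_1+m_1'=p\,m_2+m_2'. \qquad (\star) \]
(It is worth noting the structural reason: $d_{s_i}$ has the shape $(\text{special different of }Y_b/X_{i,b})+p\cdot(\text{special different of }X_{i,b}/X_b)$, i.e. a tower formula with $e(Y_b/X_{i,b})=p$, and $(\star)$ is exactly the assertion that this quantity does not depend on the intermediate cover chosen — which is what allows one subsequently to speak of \emph{the} special different of $Y_b/X_b$.)

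\emph{Step 2 (verification from Theorem 1.1).} Inspecting the six cases of Theorem 1.1, the pair $(m_1',m_2')$ always takes one of exactly two shapes:
\[ (m_1',m_2')=\big(m_2,\; p m_1-(p-1)m_2\big) \quad\text{or}\quad (m_1',m_2')=\big(p m_2-(p-1)m_1,\; m_1\big). \]
Indeed, cases (2), (3), (4) always give the second shape; cases (1) and (5) give the first when $m_1\le m_2$ and the second when $m_1>m_2$; and case (6) gives the first when $n_1<n_2$, or when $n_1=n_2$ and $m_1<m_2$, and the second otherwise. Now $(\star)$ is a one-line check in each shape: for the first, $p m_1+m_1'=p m_1+m_2$ while $p m_2+m_2'=p m_2+p m_1-(p-1)m_2=p m_1+m_2$; for the second, $p m_1+m_1'=p m_1+p m_2-(p-1)m_1=m_1+p m_2$ while $p m_2+m_2'=p m_2+m_1$. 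Either way the two sides coincide, so $(\star)$ holds and hence $d_{s_1}=d_{s_2}$.

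\emph{Step 3 (the degenerate subcase and the main difficulty).} The one configuration not covered by Theorem 1.1 is the subcase $m_1=m_2=0$ of case (5), i.e. both $f_i$ of type (a1); but nothing needs checking there, because such a configuration cannot occur under the disjointness hypothesis: a degree $p$ Kummer extension $Z^p=T^h$ with $\gcd(h,p)=1$ is just $K((T))(T^{1/p})$, so any two type-(a1) covers define the same extension and are never generically disjoint (cf. Remark 1.8). Consequently there is no genuine obstacle in this proof: the entire content is the bookkeeping of Step 2 — verifying that the formulas of Theorem 1.1 collapse into the two displayed shapes — which is immediate from the statement. The only place demanding a moment's care is keeping the sign convention $c=-m$ consistent through the reduction in Step 1.
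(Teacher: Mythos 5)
Your Steps 1 and 2 are correct and are essentially the paper's own argument (the paper just says ``substitute the values from the six cases''), made cleaner by the reduction to the single identity $pm_1+m_1'=pm_2+m_2'$ and the observation that all cases collapse into two shapes; the arithmetic in both shapes checks out.

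Step 3, however, contains a genuine error. You claim the subcase $(a1,a1)$ of case (5) (i.e.\ $m_1=m_2=0$) cannot occur because ``any two type-(a1) covers define the same extension.'' This is false, and the very Remark 1.8 you cite is a counterexample: $Z_1^p=T$ and $Z_2^p=T(1+T^2)$ are two generically disjoint $\mu_p$-torsors, both of type (a1) (for the second, $\bar u_2=t+t^3$ has lowest term $t^1$ with $1$ coprime to $p$). Your reasoning breaks down because the simplified form $Z^p=T^h$ is only achieved after a change of the parameter $T$, and one cannot simultaneously put two distinct covers in simplified form with respect to the same parameter; so a second type-(a1) cover is $Z^p=T^l v(T)$ with $v$ a nontrivial unit, which need not generate $K((T))(T^{1/p})$... and indeed in the example it does not. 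The correct way to dispose of this subcase is the opposite of what you wrote: it does occur, it is excluded from the statement of Theorem 1.1(5), and one must instead invoke the assertion of Remark 1.8 that $m_1'=m_2'$ there; combined with $m_1=m_2=0$ this gives $pm_1+m_1'=m_1'=m_2'=pm_2+m_2'$, so the identity $(\star)$ still holds. With that substitution your proof is complete.
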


\begin{proof}Follows immediately by substituting the possible values for $c'_1$ and $c'_2$ under each of the six cases given in Corollary 1.2.
\end{proof}

This $d_{s_i}$, $i=1,2$, coincides in fact with the term $\varphi(s)$ which appears in Kato's vanishing cycles formula in the case of a Galois cover of type $(p,p)$ 
(Theorem 6.7 in [Kato]). We will also see this variable makes an appearance in our genus formula in Theorem 2.2.1 in the next section.

\begin{corB}We have the following relationship between conductors: \[c'_2-c'_1 =(c_1  - c_2 )p.\]
\end{corB}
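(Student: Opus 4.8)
The cleanest route is to deduce the identity algebraically from Lemma 1.4 together with Definition 1.3, so that is what I would do first. By Lemma 1.4 the two special differents agree, $d_{s_1}=d_{s_2}$; writing this out via Definition 1.3 gives the equality $(c_1-1)p(p-1)+(c'_1-1)(p-1)=(c_2-1)p(p-1)+(c'_2-1)(p-1)$. Since $p\ge 2$ we may divide through by $p-1\neq 0$, obtaining $(c_1-1)p+(c'_1-1)=(c_2-1)p+(c'_2-1)$, i.e.\ $c_1p+c'_1=c_2p+c'_2$, which rearranges at once to $c'_2-c'_1=(c_1-c_2)p$. That is essentially the whole proof.

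For completeness I would also record a direct derivation from Theorem 1.1 that does not route through Lemma 1.4. Recall from the Background that conductor and conductor variable satisfy $c=-m$ (in the $\mu_p$-case (a1) both are $0$, consistently with the conventions underlying Theorem 1.1, where $m_i=0$ there), so $c_i=-m_i$ and $c'_i=-m'_i$ for $i=1,2$. Inspecting the six cases of Theorem 1.1 one sees that the pair $(m'_1,m'_2)$ is always of one of exactly two symmetric shapes,
\[(m'_1,m'_2)=\bigl(m_2,\;pm_1-(p-1)m_2\bigr)\qquad\text{or}\qquad(m'_1,m'_2)=\bigl(pm_2-(p-1)m_1,\;m_1\bigr),\]
the first occurring in cases $1$, $5$, $6$ for one ordering of the parameters and the second in the complementary subcases of $1$, $5$, $6$ together with all of cases $2$, $3$, $4$. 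In either shape a one-line computation gives $m'_2-m'_1=p(m_1-m_2)$, whence $c'_2-c'_1=m'_1-m'_2=p(m_2-m_1)=(c_1-c_2)p$.

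The main point to be careful about, such as it is, is simply that $p-1\neq 0$ so the division in the first argument is legitimate (equivalently one need only track the coefficients of the terms that survive), and, for the second argument, the bookkeeping of the convention $c=-m$ together with the verification that the six cases of Theorem 1.1 really do collapse to the two shapes above. The excluded subcase $m_1=m_2=0$ of $(\mu_p,\mu_p)$ is the only place where the formulas of Theorem 1.1 — and hence this identity — are not asserted. There is otherwise no genuine obstacle here: all the substance is already contained in Theorem 1.1 (equivalently Corollary 1.2) and Lemma 1.4.
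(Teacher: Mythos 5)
Your first argument is exactly the paper's proof: Corollary 1.5 is deduced by writing out $d_{s_1}=d_{s_2}$ from Lemma 1.4 via Definition 1.3 and cancelling the factor $p-1$, and your algebra is correct. The additional direct verification through the six cases of Theorem 1.1 (with the convention $c=-m$) is a sound cross-check but not needed; the primary route matches the paper.
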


\begin{proof}Follows from rearranging the relationship between conductors given by $d_{s_1} = d_{s_2}$.
\end{proof}

Had Corollary 1.5 been given, we would have only needed to perform half of the computations in the proof of Theorem 1.1. In particular, with $m'_1$ (or equivalently $c'_1$) obtained, $m'_2$ (or equivalently $c'_2$) would be determined by this relationship. In fact, this formula could have been derived independently using the theory of higher ramification groups as per [Serre] but only for 
the $\left( \mathcal{H}_{v_K(\lambda)}, \mathcal{H}_{v_K(\lambda)} \right)$ case, the first of the six cases in Theorem 1.1. This is because this ramification theory only holds when the residue field extension is separable and so both group schemes $G_i$, $i=1,2$, must be \'etale.

From the calculations in the proof of Theorem 1.1 it is also possible to compute the degree of the different $\delta$ in the extension $B/A$. 
Since the ramification index of this extension $e=1$, we see easily that $\delta = \delta_1 +  \delta_1'=\delta_2 + \delta'_2$ (cf. Theorem 1.6 for notations).
\begin{equation*}
 \xymatrix{
& Y_b \ar@{->}[dl]_{ \delta'_1 } \ar@{->}[dd]_{\delta} \ar@{<-}[dr]^{ \delta'_2} \\
X_{1,b} && X_{2,b}  \\
& X_b  \ar@{<-}[ul]^{\delta_1} \ar@{<-}[ur]_{ \delta_2 }
}
\end{equation*}

\begin{thmE}With the situation described in Theorem 1.1, let $\delta'_i$ (resp. $\delta_i$) denote the degree of the different corresponding to the extension $Y_b \rightarrow X_{i,b}$
(resp. $X_{i,b}\to X_b$), $i=1,2$. Then, for all possible pairs $(G_1,G_2)$, we can explicitly state the values for $\delta'_i$ as follows:

\begin{center}\small
\begin{longtable}{ | p{3cm}| p{5cm} | p{5cm} |}
\caption[Degree of the differents $\delta_1' $, $\delta'_2 $  ]{Degree of the differents $\delta_1' $, $\delta'_2 $ }\\

\hline \multicolumn{1}{|c|}{$\left( G_{1}, G_{2} \right)$} & \multicolumn{1}{c|}{$\delta_1' $} & \multicolumn{1}{c|}{$ \delta'_2 $} \\ \hline \hline
\endfirsthead

\multicolumn{3}{c}%
{{}} \\
\hline \multicolumn{1}{|c|}{$\left( G_{1}, G_{2} \right)$} &
\multicolumn{1}{c|}{$\delta_1' $} &
\multicolumn{1}{c|}{$ \delta'_2 $} \\ \hline 
\endhead

\hline \multicolumn{3}{|r|}{{Continued on next page}} \\ \hline
\endfoot

\hline \hline
\endlastfoot

\begin{center}$\mathcal{H}_{v_K(\lambda)}, \mathcal{H}_{v_K(\lambda)}$\end{center}
& \begin{center}$0$\end{center}
&  \begin{center}$0$\end{center}
\\ \hline

\begin{center}$\mathcal{H}_{v_K(\lambda)},\mu_p$\end{center}
&  \begin{center}$v_K(p)$\end{center}
& \begin{center}$0$\end{center}
\\ \hline

\begin{center}$\mathcal{H}_{v_K(\lambda)},\mathcal{H}_{n}$\end{center}
& \begin{center}$v_K(p)- n(p-1)$\end{center}
&  \begin{center}$0$\end{center}
\\ \hline

\begin{center}$\mathcal{H}_{n},\mu_p$\end{center}
&\begin{center} $v_K(p)- \frac{v_K(p) -n (p-1)}{p} (p-1)$\end{center}
& \begin{center}$v_K(p)- \frac{v_K(p) +n}{p} (p-1)$\end{center}
\\ \hline

\begin{center}$\mu_p,\mu_p$\end{center}
& \begin{center}$v_K(p)- \frac{v_K(p)}{p} (p-1)$\end{center}
& \begin{center}$v_K(p)- \frac{v_K(p)}{p} (p-1)$\end{center}
\\ \hline

\begin{center}$\mathcal{H}_{n_1},\mathcal{H}_{n_2}$\end{center}
& If $n_1 \leq n_2$ then $\delta'_1$ equals: 

\begin{center}$v_K(p)- \frac{v_K(p)+n_2}{p} (p-1)$\end{center}

If $n_1 > n_2$ then $\delta'_2$ equals:  

\begin{center}$v_K(p)- \frac{v_K(p)+n_2p-n_1(p-1)}{p} (p-1)$\end{center}

& If $n_1 \leq n_2$ then $\delta'_2$ equals: 

\begin{center}$v_K(p)- \frac{v_K(p)+n_1p - n_2(p-1)}{p} (p-1)$\end{center}

If $n_1 > n_2$ then $\delta'_2$ equals:

\begin{center}$v_K(p)- \frac{v_K(p)+n_1}{p} (p-1)$\end{center}

\end{longtable}
\end{center}

\end{thmE}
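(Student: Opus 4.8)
The plan is to reduce everything to the tower $Y_b\to X_{i,b}\to X_b$ together with the explicit equations already produced in the proof of Theorem 1.1. First I would invoke the multiplicativity of the different: since the ramification index is $1$ in each of the three extensions of DVR's involved, one has $\delta=\delta_i+\delta'_i$ for $i=1,2$ (in particular $\delta_1+\delta'_1=\delta_2+\delta'_2$), as already noted just before the statement. Moreover the different $\delta_i$ of a degree-$p$ torsor depends only on its acting group scheme: from the Background, $\delta_i=v_K(p)$ if $G_i=\mu_p$, $\delta_i=v_K(p)-n(p-1)$ if $G_i=\mathcal{H}_n$, and $\delta_i=0$ if $G_i=\mathcal{H}_{v_K(\lambda)}$. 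Hence it is enough to compute $\delta$, equivalently one of the $\delta'_i$, in each of the six cases, the partner value being forced by $\delta_1+\delta'_1=\delta_2+\delta'_2$ (or by the symmetry of the computation).

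For the first three cases, where at least one $G_i$ is the \'etale group scheme $\mathcal{H}_{v_K(\lambda)}$, I would use the fact recalled at the beginning of the proof of Theorem 1.1 (via Theorem 3.4) that $Y_b=X_{1,b}\times_{X_b}X_{2,b}$. Then $Y_b\to X_{i,b}$ is the base change of $X_{j,b}\to X_b$ along $X_{i,b}\to X_b$, with $\{i,j\}=\{1,2\}$; base change preserves the torsor structure and the different, so $\delta'_i=\delta_j$. Substituting the Background values yields the first three rows of the table (for instance, for $(\mathcal{H}_{v_K(\lambda)},\mu_p)$ one gets $\delta'_1=\delta_2=v_K(p)$ and $\delta'_2=\delta_1=0$, and for $(\mathcal{H}_{v_K(\lambda)},\mathcal{H}_n)$ one gets $\delta'_1=\delta_2=v_K(p)-n(p-1)$ and $\delta'_2=\delta_1=0$).

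For the remaining three cases, where $Y_b\to X_{i,b}$ need not be a torsor, I would read the answer off the computations in the proof of Theorem 1.1. There, for each case and each $i$, base changing the equation of $X_{j,b}\to X_b$ to $X_{i,b}$ and performing the Kummer-type manipulations produces an equation for $Y_b\to X_{i,b}$ whose dominant ``interesting'' part is $\pi^{a_i}$ times a unit times $\theta_i^{m'_i}$ plus higher order terms, where $\theta_i$ is a parameter of $X_{i,b}$, $m'_i$ is the conductor variable already computed, and $a_i$ is the minimal $\pi$-exponent exhibited in that proof --- for example $a_1=v_K(p)-n(p-1)$ and $a_2=v_K(p)+n$ in the $(\mathcal{H}_n,\mu_p)$ case, $a_1=a_2=v_K(p)$ in the $(\mu_p,\mu_p)$ case, and the expressions $v_K(p)+n_2$, $v_K(p)+pn_1-n_2(p-1)$, etc.\ in the $(\mathcal{H}_{n_1},\mathcal{H}_{n_2})$ case according to whether $n_1\le n_2$ or $n_1>n_2$. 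Precisely the different computation carried out in [Sa\"\i di1] and recalled in the Background (which gives $v_K(p)$, $v_K(p)-n(p-1)$, $0$ for the three standard forms) applies to such an equation and yields $\delta'_i=v_K(p)-\frac{a_i}{p}(p-1)$; inserting the values of $a_i$ produces the last three rows.

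The hard part will be the last three cases, and specifically two points. One must check that the generator used to compute the different actually generates the integral closure of $\mathcal{O}(X_{i,b})$ in the function field of $Y_b$ --- the naive generator coming from the torsor equation of $X_{j,b}\to X_b$ need not do so, which is exactly why $Y_b\to X_{i,b}$ fails to be a torsor in these cases --- and one must check that the $\pi$-valuations $a_i$ are divisible by $p$, so that $\delta'_i=v_K(p)-\frac{a_i}{p}(p-1)$ is a non-negative integer; this divisibility is forced by the reducedness of $(Y_b)_k$, exactly as invoked in the proof of Theorem 1.1. A minor additional point is that, as in Remark 1.8, the $(\mu_p,\mu_p)$ row presupposes that $(m_1,m_2)\ne(0,0)$, and the $(\mathcal{H}_{n_1},\mathcal{H}_{n_2})$ row requires tracking the ordering of $n_1$ and $n_2$, just as in the corresponding case of Theorem 1.1.
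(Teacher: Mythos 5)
Your proposal is correct and follows essentially the same route as the paper: for the first three cases it uses that $Y_b=X_{1,b}\times_{X_b}X_{2,b}$ and that the different is preserved under \'etale base change, and for the last three it reads the $\pi$-exponent $np=a_i$ of the dominant term off the computations in the proof of Theorem 1.1 and substitutes into $\delta=v_K(p)-n(p-1)$. The extra checks you flag (integrality of the generator, divisibility of $a_i$ by $p$, the multiplicativity $\delta=\delta_i+\delta'_i$) are consistent with, and slightly more careful than, the paper's own terse argument.
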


\begin{proof} For an arbitrary rank $p$ torsor with conductor variable $m$ and torsor equation $Z^p=1+\pi^{np}T^m + \text{higher terms}$, 
where $0 \leq n \leq v_K(\lambda)$, the degree of the different is given by $\delta = v_K(p)-n(p-1)$. This means that computing the degree of the different $\delta$ reduces to obtaining $n$ from the exponent of $\pi$ in the coefficient of the term corresponding to the conductor variable $m$. From our calculations obtained in the proof of Theorem 1.1, we can simply read off the $n$ value in each of the cases and substitute into the formula $v_K(p)-n(p-1)$ to obtain the degree of the different at that particular stage. Strictly speaking, we can only rely on this approach for the last three cases because in the first three cases we worked modulo $\pi$ on the special fibre. In the first three cases to compute $\delta$ we use the fact that the degree of the different is preserved by \'etale base change. 
\end{proof}
\noindent
{\bf Example 1.7.} With the type $(p,p)$ case established it is possible to manually perform the same calculations in the type $(p,...,p)$ setting. We illustrate this with a type $(p,p,p)$ example, using the 
$(p,p)$-type results in  Theorem 1.1 and Corollary 1.2 iteratively at each stage to determine the conductors in terms of the base level conductors. 
\begin{equation*}
 \xymatrix{
&& Y \ar[dl]_{c''_{1}}  \ar[dr]^{c''_{2}} \\
& {(X_1 \times X_2 )}^{\text nor}\ar[dl]^{c'_{1}} \ar[dr]_{c'_{2}} && { (X_2 \times X_3)^{\text nor}} \ar[dl]^{c'_{3}}  \ar[dr]_{c'_{4}}  \\
X_1 \ar[drr]_{c_1} && X_2 \ar[d]_{c_2} && X_3 \ar[dll]^{c_3} \\
&& X}
\end{equation*}
Suppose $X_i \rightarrow X$ are torsors under the $R$-group scheme $G_i$ of rank $p$ for $i=1,2,3$ which are pairwise generically disjoint, i.e $X_i$ is generically disjoint from $X_j$ for $i\neq j$. 
Write $(X_1 \times X_2)^{\text nor}$ and $(X_2 \times X_3)^{\text nor}$ for the normalisation of $X$ in $X_{1,K}\times_{X_K} X_{2,K}$ and $X_{2,K}\times_{X_K} X_{3,K}$, respectively, and $Y$ for the normalisation of $X$ in $X_{1,K}\times_{X_K} X_{2,K}\times_{X_K} X_{3,K}$.
For the purposes of an example, let $G_i = \mathcal{H}_{v_K(\lambda)}$ for all $i$ and assume $c_1 \leq c_2 \leq c_3$ and $c'_2 \leq c'_3$. Then, by applying the type $(p,p)$ formula iteratively we can compute the conductor $c''_1$ as follows (similarly one can compute $c''_2$):
%\begin{equation*}\begin{split}
$c''_1 = c'_3p-c'_2(p-1) = \left( c_3p-c_2(p-1) \right)p-\left( c_1 \right)(p-1) 
= c_3p^2-c_2p(p-1)-c_1(p-1).$
%\end{split}\end{equation*}
%Similarly we can compute $c''_2=c_1p^2-c_2p(p-1)-c_3(p-1)$.
%\end{exmA} 

%\begin{rem}
\medskip
\noindent
{\bf Remark 1.8}
We discuss an example which illustrates the case $\bold {(a1,a1)}$ occuring in the proof of Theorem 1.1, the case $(\mu_p,\mu_p)$.  
Here $m_1 = m_2 = 0$. In this case one can show that the group schemes acting on the torsors $Y_b \rightarrow X_{i,b}$ with conductor variables $m'_i$ are $\mathcal{H}_{n'_i}$ with $0 < n'_i < v_K(\lambda)$ for $i=1,2$. Moreover, one can show $n'_1=n'_2$ and $m'_1 = m'_2$. Suppose $u_1 = T^h$ and $u_2=T^l \left( v(T) \right)$ where $v(T)= \sum_{i \geq 0} a_i T^i$ such that $a_0 \in R^{\times}$ is a unit. 
The conductor variables $m'_1, m'_2$ are in fact encoded in $v(T)$. The proof is complicated to present in general. 
Instead, we treat an instructive example to illustrate the computations involved.

Suppose $p \not=2 $, $u_1 = T$ and $u_2=T + T^3=T(1+T^2)$. Then the $\mu_p$-torsors above $X_b$ generically defined by $Z_1^p =T$ and $Z_2^p =T(1+T^2)$ are linearly disjoint.
We begin by computing $m'_1$. We can write $T =Z^p$ where $Z= Z_1$ is the parameter of $X_{1,b}$. Then we can base change the torsor equation for $X_{2,b} \rightarrow X_b$ to $X_{1,b}$:
%\begin{equation*}\begin{split}
$Z_2^p =  T(1+T^2) =Z^p(1+Z^{2p}). $
%\end{split}\end{equation*}
Removing the multiplicative factor $Z^p$ (which is a $p$-power) gives rise to an equation of the form:
%\begin{equation*}\begin{split}
$Z_2^p =  1+Z^{2p} = \left( 1+Z^{2} \right)^p - \sum_{k=1}^{p-1} \binom{p}{k} Z^{2k}.$
%\end{split}\end{equation*}
Multiplying this equation by the $p$-power $\left( 1+Z^{2} \right)^{-p} = 1 -pZ^2 + ... $, results in an equation of the form:
%\begin{equation*}\begin{split}
$Z_2^p =  1 - \sum_{k=1}^{p-1} \binom{p}{k} Z^{2k}+ \text{higher order terms}. $
%\end{split}\end{equation*}
The smallest power of $Z$ which is coprime to $p$ is obtained when $k=1$. Therefore, $m'_1=2$.

Now we want to determine $m'_2$.  We have that $Z_2^p =T(1+T^2)$ which we can write $Z_2^p=T' \Leftrightarrow Z^p=T'$ where the parameter of $X_{2,b}$ is $Z=Z_2$, and the relation $T'=T(1+T^2)$ implies:
%\begin{equation*}\begin{split}
$T =  T' \left( 1+T^2 \right)^{-1} =  T' \left( 1 - T^2 + T^4 - T^6 + ...  \right)  
=  T'  + \left( - T' T^2 + T' T^4 - T' T^6 + ...  \right).$
%\end{split}\end{equation*}
From this we deduce that $T$ can be expressed as $T' + $ higher powers of $T'$.  In particular, $T =  T' - T'^3 + T'^5 - T'^7 + ... $. We can now proceed to base change the torsor equation of $X_{1,b} \rightarrow X_b$ to $X_{2,b}$:
%\begin{equation*}\begin{split}
$Z_1^p = T= T' - T'^3 + T'^5 - T'^7 +  \text{higher order terms}  
= Z^p - Z^{3p} + Z^{5p} - Z^{7p}+  \text{higher order terms}  
= Z^p \left( 1 - Z^{2p} + Z^{4p} - Z^{6p}+  \text{higher order terms} \right).$
%\end{split}\end{equation*}
Removing the multiplicative factor $Z^p$, gives rise to an equation of the form:
%\begin{equation*}\begin{split}
$Z_1^p = 1 - Z^{2p} + Z^{4p} - Z^{6p} +  \text{higher order terms} 
= \left(  1 - Z^{2} + Z^{4}  ... \right)^p  - p \sum (Z^2 + Z^4  + ... ) + \text{higher order terms}$;
%\end{split}\end{equation*}
by using the formula $(**)$. Multiplying this equation by the inverse $p$-power $ \left(  1 - Z^{2} + Z^{4}  ... \right)^{-p}$, results in an equation of the form:
%\begin{equation*}\begin{split}
$Z_1^p =1  - p \sum (Z^2 + Z^4  + ... ) + \text{higher order terms} 
=1  - \pi^{v_K(p)} \sum (Z^2 + Z^4  + ... ) + \text{higher order terms}$;  
%\end{split}\end{equation*} 
up to multiplying the coefficients by units. Therefore, the conductor variable is $m'_2=2=m'_1$ and $n'_2=\frac{v_K(p)}{p}=n'_1$.
%\end{rem}

\section*{\S2 Computation of vanishing cycles}

\section*{\S 2.1 Computation of vanishing cycles in Galois cover of degree $p$}
In this section we recall some results from [Sa\"\i di1].
\begin{defB}For an $R$-curve $X$ and a closed point $x\in X$, we let $\mathcal{X} := \mathrm{Spf} \hat{ \mathcal{O}}_{X,x} $ 
denote the formal spectrum of the completion of the local ring of $X$ at $x$.  Assume $X_k$ is reduced.
Then the \textbf{genus of the point} $x$ is given by:
\[g_x := \delta_x - r_x +1 \]
where
\begin{itemize}
\item $\delta_x = \mathrm{dim}_k \left( \tilde{\mathcal{O}}_x / \mathcal{O}_x \right)$, 
\item $r_x$ is the number of maximal ideals in $\tilde{\mathcal{O}}_x$. 
\end{itemize}
Here $\mathcal{O}_x\defeq \hat {\mathcal O}_{X,x}/\pi$ denotes the stalk of the special fibre $\mathcal{X}_k$ at $x$ and $\tilde{\mathcal{O}}_x$ denotes its normalisation in its total ring of fractions. 
\end{defB}

%Perhaps not unlike the genus of a curve which is a topological invariant and offers a measure for the geometric complexity of the curve, the genus of a point can be regarded as a measure of the %singularity at the point. 
If $g_x=0$, the point $x$ is either a smooth or an ordinary multiple point (where $\delta_x = r_x -1$). 
%An ordinary double point is a particular case of an ordinary multiple point where $r_x=2$ and $\delta_x=1$.  
%For $g_x \geq 1$ the point is not smooth and has some singularity. 
Here is a result which provides an explicit formula---a local Riemann-Hurwitz formula---comparing the (above) genus in a Galois cover of degree $p$.

\begin{thmF}(cf. Theorem 3.4 in [Saidi1]) Let $X := \mathrm{Spf} \left( \hat{\mathcal{O}}_x \right)$ be the formal germ of an $R$-curve at a closed point $x$ with $X_k$ reduced (cf. Notations). 
Let $f :Y \rightarrow X$ be a Galois cover of degree $p$ with $Y$ normal and local. Assume that the special fibre $Y_k$ of Y is reduced. 
Let $\{\wp_i \}_{i \in I}$ be the minimal prime ideals of $\hat{\mathcal{O}}_x$ which contain $\pi$, and let $X_{b_i} := \mathrm{Spf} \left( \hat{\mathcal{O}}_{\wp_i} \right)$ be the formal completion of the localisation of X at $\wp_i$. For each $i \in I$ the above cover $f$ induces a torsor $f_{b_i} : Y_{b_i} \rightarrow X_{b_i}$ under a finite and flat $R$-group scheme $G_i$ of rank $p$ above the boundary $X_{b_i}$ with conductor $c_i$, we write $c_i=1$ in case this torsor is trivial.
If $y \in Y$ is the closed point of $Y$, then:
\[ 2g_y - 2 = p(2g_x - 2)+ d_\eta - d_s \]
where $g_y$ (resp. $g_x$) denotes the genus of $y$ (resp. $x$), $d_\eta$ is the degree of the divisor of ramification in the morphism $f_K : Y_K \rightarrow X_K$ 
induced by $f$ on the generic fibre, and $d_s = \sum_{i \in I} (c_i-1)(p-1)$.
\end{thmF}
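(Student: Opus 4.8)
The overall idea is to compare the genera through the behaviour of the dualising sheaves, separating an easy ``generic'' contribution over $K$ from a ``special'' contribution over $k$ that must reproduce $d_s$ exactly. Concretely I would first globalise. By hypothesis $\hat{\mathcal O}_x$ is the completion of an $R$-curve, so after normalising and compactifying we may assume it is $\hat{\mathcal O}_{\mathcal X,\bar x}$ for a proper, normal, flat, geometrically connected $R$-curve $\mathcal X$ with $\mathcal X_k$ geometrically reduced, and a closed point $\bar x$. The torsor $f$ gives a $\mu_p$-torsor on the punctured formal neighbourhood of $\bar x$ in $\mathcal X_K$; extending its Kummer class (possibly after a finite extension of $R$) we obtain a degree $p$ Galois cover $g\colon\mathcal Y\to\mathcal X$ of such curves with $\mathcal Y_k$ reduced, ramified over a divisor of $\mathcal X_K$ which away from $\bar x$ can be taken supported at finitely many points specialising to smooth points of $\mathcal X_k$, and whose formal germ at the point above $\bar x$ recovers $f\colon Y\to X$.

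On the generic side $\mathcal X_K,\mathcal Y_K$ are smooth projective, $g_K$ is a tame $\mathbb Z/p$-cover since $\chr K=0$, and ordinary Riemann--Hurwitz gives $2g(\mathcal Y_K)-2=p(2g(\mathcal X_K)-2)+D_\eta$ with $D_\eta$ the degree of the ramification divisor of $g_K$, whose part at $\bar x$ is the $d_\eta$ of the statement. On the special side one uses constancy of $\chi$ in the flat proper families $\mathcal X,\mathcal Y\to\Spec R$ to get $p_a(\mathcal X_k)=g(\mathcal X_K)$ and $p_a(\mathcal Y_k)=g(\mathcal Y_K)$, and then analyses the finite flat degree $p$ map $g_k\colon\mathcal Y_k\to\mathcal X_k$, decomposing the difference of arithmetic genera into contributions localised at the closed points of $\mathcal X_k$ where $g_k$ is not \'etale (the $\delta$-invariants, the normalisation genera through the induced map $\widetilde{\mathcal Y}_k\to\widetilde{\mathcal X}_k$, and the change in number of branches). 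Comparing with the generic identity lets one isolate the point $\bar x$: at the auxiliary points, chosen smooth with tame ramification, the formula is immediate and classical, so the global identity forces the contribution at $\bar x$ to be what the theorem predicts, provided one can identify, branch by branch, the local special contribution with $(c_i-1)(p-1)$. Over a branch $\wp_i$ (where the ramification index is $1$) the residue extension is a degree $p$ extension $k((t_i'))/k((t_i))$ in one of exactly three states matching the Background: (i) split --- the boundary torsor is trivial, there are $p$ branches above, $c_i=1$, no contribution; (ii) nontrivial \'etale Artin--Schreier --- acting group $\mathbb Z/p$, $G_i=\mathcal H_{v_K(\lambda)}$ --- where the different exponent is $(c_i+1)(p-1)$ by the standard Artin--Schreier formula ($c_i$ being the pole order of the Artin--Schreier datum, i.e. the boundary conductor), and after the one-branch-versus-$p$-branches correction this contributes $(c_i-1)(p-1)$; (iii) purely inseparable --- acting group $\mu_p$ or $\alpha_p$ --- where the analogous invariant is extracted directly from the explicit torsor equations $Z^p=T^{h}$ or $Z^p=1+\pi^{np}T^{m}$ of the Background and is checked to contribute $(c_i-1)(p-1)$ as well. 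Summing over $i$ yields $d_s=\sum_i(c_i-1)(p-1)$ and hence the stated formula.

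The main obstacle is case (iii). Once the residue extension of $Y_{b_i}/X_{b_i}$ is inseparable, Serre's theory of higher ramification groups no longer applies --- there is no filtration of a Galois group from which to read a different --- so the contribution of that branch to $d_s$ has to be computed by hand from the explicit local description of the torsor; this is exactly the obstruction that, in the type $(p,p)$ setting, forces the laborious conductor computations of Theorem 1.1, and getting it to come out as $(c_i-1)(p-1)$ uniformly with the separable cases is the heart of the matter. A secondary, technical point is the globalisation: one must make the auxiliary generic ramification harmless (tame, at points with smooth special fibre) so that it contributes only to $d_\eta$ and cancels in the ``isolate $\bar x$'' step; alternatively one can avoid globalisation and argue purely locally, using the compatibility $\omega_{\mathcal O_y}=\mathrm{Hom}_{\mathcal O_x}(\mathcal O_y,\omega_{\mathcal O_x})$ of dualising modules under $\mathcal O_x\to\mathcal O_y$ (and its $R$-flat lift relating $\hat{\mathcal O}_x$ and $\hat{\mathcal O}_y$), which reduces everything to the same three local different computations, the inseparable one remaining the bottleneck.
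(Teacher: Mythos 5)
The paper does not actually prove this statement: Theorem 2.1.2 is recalled verbatim from [Sa\"\i di1], Theorem 3.4, and the section it sits in is explicitly labelled as background. So the only fair comparison is with the strategy of that source, and there your outline is essentially the right one: algebraise/compactify the germ, extend the cover, compare Euler characteristics of generic and special fibres of the resulting proper flat curves, and localise the discrepancy at the branches. Two points, however, keep the proposal from being a proof. First, the entire content of the theorem is the identification, branch by branch, of the local special contribution with $(c_i-1)(p-1)$, uniformly across the three reduction types of the Background ($\mathcal H_{v_K(\lambda)}$, $\mu_p$, $\mathcal H_n$). You carry this out (modulo the branch-count bookkeeping) only in the \'etale Artin--Schreier case via the different formula $(c_i+1)(p-1)$; in the inseparable cases you write that the contribution ``is checked'' from the equations $Z^p=T^h$ and $Z^p=1+\pi^{np}T^m$ and then name this as the remaining bottleneck. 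That check is not a technicality to be deferred --- it is the theorem, and it is precisely the computation that [Sa\"\i di1] performs (and that \S 1 of the present paper has to redo at the intermediate levels of a $(p,p)$-cover). As it stands the argument establishes the shape of the formula but not the value of $d_s$.

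Second, your globalisation step has a control problem. You extend the Kummer class of $f_K$ to a global degree $p$ cover and then ``isolate $\bar x$'' by subtracting the contributions of the auxiliary branch points; but the statement that those auxiliary points contribute nothing beyond their part of $d_\eta$ is itself an instance of the theorem (the case of a smooth point with one boundary), and a generic extension of the Kummer class gives you no control over the reduction type of the auxiliary boundary torsors --- they could well be non-\'etale, reintroducing the inseparable computation you are trying to localise away. The way this is handled in the source is by formal patching: one glues the germ $F_x$ along each boundary $X_{b_i}$ to a closed formal disc, using the \emph{simplified} torsor equation on that boundary (from the Background) to prescribe an explicit extension of the cover over the disc whose special and generic genera are computed directly. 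If you replace your ``extend the Kummer class and choose the auxiliary ramification tame'' step by this patching construction, and actually carry out the $(c_i-1)(p-1)$ computation in the $\mu_p$ and $\mathcal H_n$ cases, the argument closes.
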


We will refer to this formula simply as the genus formula.
%\begin{proof}See [Saidi1] for a detailed proof of this result. 
%In short, a compactification of the Galois cover is constructed using formal patching and the result is deduced from comparing the genus of the special and generic fibres in the compactification. 
%\end{proof}
The following corollary is immediate from Theorem 2.1.2.

\section*{\S 2.2 Computation of vanishing cycles in Galois covers of type $(p, p)$}
In this section we prove that the degree $p$ genus formula in Theorem 2.1.2 can be extended to the case of Galois covers of type $(p,p)$. 
%In short, to obtain this formula we apply the degree $p$ formula twice iteratively but to do this we need to ensure we have all the relevant data at each of the two steps. In particular, we require knowledge of the ramification at each step as well as information on the group schemes and conductors acting on the boundaries. 
%We will then be in a position to apply the formula once on the first step to find $g_{y_1}$ (dependent on $g_x$) and then again on the second step to find $g_{y_2}$ (dependent on $g_{y_1}$) and substitute one into the other giving a genus formula for $g_{y_2}$ in terms of $g_x$ across the Galois cover with group $\mathbb{Z}/p\mathbb{Z} \times \mathbb{Z}/p\mathbb{Z}$ (see the picture below).
Let $f : Y \rightarrow X$ be a Galois cover of type $(p,p)$ where $X$ is a formal germ of an $R$-curve, $Y$ is local and normal, and $Y_k$ is reduced. We can express 
$f$ as the compositum of two, generically disjoint, degree $p$ Galois covers $Y_i \rightarrow X$ with $Y_i$ normal and local for $i=1,2$ as follows:
\begin{equation*}
 \xymatrix{
y_2  &&& Y \ar[ddll] \ar[dddd] \ar[ddrr] \\
\\
y_1 \ar@{<-}[uu]  \ar@/^2pc/[uu]^{g_{y_2}} & Y_1 \ar[ddrr]_{\mathbb{Z}/p\mathbb{Z}}  &&&& Y_2 \ar[ddll]^{\mathbb{Z}/p\mathbb{Z}} \\
\\
x  \ar@{<-}[uu]  \ar@/^2pc/[uu]^{g_{y_1}} &&&  X
}
\end{equation*}

Let $\{x_i\}_i \subset X_K$ be the (finite) set of branched points in the cover $f_K:Y_K\to X_K$ between generic fibres and $\{y_{ij}\}_{i,j}\subset Y_K$ the set of ramified points in $f_K$ with $r=\mathrm{Card} \{ \{y_{ij}\}_{i,j} \}$. Thus, for fixed $i$ the $\{y_{ij}\}_{i,j}$ are the points of $Y_K$ above $x_i$.

We assume there are $r_1$ points ($\subset Y_1$) ramified in $Y_{1,K}\to X_K$ and $r_2$ ramified points ($\subset Y$) in $Y_K\to Y_{1,K}$. Because the Galois group $G$ is of type $(p,p)$ an 
inertia subgroup of $G$
has at most cardinality $p$ since $\mathrm {char} (K)=0$ and so the inertia subgroups must be cyclic.
%at each individual (degree $p$-sub-cover) stage the cover has group $G= \mathbb{Z}/p\mathbb{Z}$ consisting of $p$ elements, any decomposition subgroup of $G$ will have cardinality $1$ or $p$ and this in turn means that the inverse image of a branched point of $f$ in $Y$ has cardinality $p$ or $1$.  
%However, at each individual stage the degree of the cover is $p$ and so a point is totally ramified if its inverse image contains just one point. At each of these two stages, either a single point or $p$ points can sit above a branch point which results in the following four cases. 
%The case of a branch point sitting below $p$ points, each of which sit below $p$ points contradicts the definition of a branch point since over both steps its inverse image will contain $p^2$ points and this is exactly the degree of the $(p, p)$ cover. The case of a branch point sitting below one point and which itself sits below one point is also not possible. If it were to happen, the decomposition group would be full, namely $\mathbb{Z}/p\mathbb{Z} \times \mathbb{Z}/p\mathbb{Z}$, which is not cyclic but the decomposition group has to be cyclic in characteristic 0. 
Therefore, two cases occur:  at the first stage we have one point above a branched point then at the second stage there must be $p$ points which sit above it \emph{or} at the first stage we have $p$ points above a branched point then at the second stage there is $1$ point above each of these $p$ points.
In summary, for a branched point $x=x_i$, we have one of the two situations occurring:
\begin{equation*}
 \xymatrix{
{y}_1 \ar[drr] & {y}_2  \ar[dr] & & ... & {y}_p \ar[dll] && {y}_1 \ar[d] & {y}_2  \ar[d] & & ... & {y}_p \ar[d] \\
&& \tilde y  \ar[d] &&&&  \tilde y_1 \ar[drr] & \tilde y_2  \ar[dr] & & ... & \tilde y_p  \ar[dll] \\
&& x &&& &&& x
}
\end{equation*}

The diagram on the left depicts ramification occurring at the first stage while the diagram on the right depicts ramification occurring at the second stage. Since these two cases are disjoint, this gives us that $r=r_1p+r_2$ where:
\begin{itemize}
\item $r_1 = \mathrm{Card} \{ \text{ ramified points in } Y_1 \rightarrow X \ \}$
\item $r_2 = \mathrm{Card} \{ \text{ ramified points in } Y \rightarrow Y_1 \ \}$
\end{itemize}

For the branched points $\{x_i\}_i \subset X_K$ in the cover $f_K  : Y_K \rightarrow X_K$, we can visualise the general picture, including decomposition groups, as follows:
\begin{equation*}
 \xymatrix{
\{ {y}_{i,j}\}_{i,j} \subset Y_K   \ar@/^1pc/[dd]^{D_{{y}_{i,j}}} \ar@/_3pc/[dddd]_{f_K} \\
\\
\{ \tilde y_{i,j}\}_{i,j} \subset Y_{1.K}   \ar@/^1pc/[dd]^{D_{{\tilde y}_{i,j}}} \\
\\
\{ x_{i} \}_i \subset  X_K
}
\end{equation*}
where $D_{\tilde y_{i,j}} \leq \mathbb{Z}/p\mathbb{Z} $ and $D_{{y}_{i,j}}  \leq \mathbb{Z}/p\mathbb{Z} $ denote the decomposition groups of the point $\tilde y_{i,j}$ at the first stage and the point 
${y}_{i,j}$ above $\tilde y_{i,j}$ at the second stage respectively. If only one point sits above $x_i$ in $Y_1$ then the order of the decomposition group $D_{\tilde y_{i,j}}$ 
(resp. $D_{y_{i,j}}$) 
will equal $p$ (resp. $1$) and, otherwise, the opposite is true. This means we have a natural test for ramification in the first and second step as follows:
\[ p = |D_{\tilde y_{i,j}}| \Leftrightarrow p \not= |D_{{y}_{i,j}}| =1 \Leftrightarrow x_i \text{ ramifies at 1st stage} \]
\[ p \not= |D_{\tilde y_{i,j}}| =1 \Leftrightarrow p = |D_{{y}_{i,j}}| \Leftrightarrow x_i \text{ ramifies at 2nd stage} \]

Now we turn to address the decomposition above the boundaries. Let $\{ X_{b_t} \}_t$ denote the boundaries of $X$. 
%If $X := \mathrm{Spf} \left( \hat{\mathcal{O}}_x \right)$ is the formal germ of an $R$-curve at a closed point $x$ and $\{ \wp_i \}_{i \in I}$ are the minimal prime ideals of $\hat{\mathcal{O}}_x$ which contain $\pi$, then the boundaries are given by $X_{b_i} := \mathrm{Spf} \left( \hat{\mathcal{O}}_{\wp_i} \right)$, the formal completion of the localisation of X at $\wp_i$. 
%\begin{center}
%\includegraphics[width=100px]{images/boundaries.png}
%\end{center}
For each $t$, the Galois cover $f:Y \rightarrow X$ induces a Galois cover $f_t : Y_{b_t} \rightarrow X_{b_t}$ above the boundary $X_{b_t}$
(note that $Y_{b_t}$ is not necessarily connected). 
%Assume there is a unique boundary $Y_{b_t}$ above $X_{b_t}$.
Unlike the degree $p$ case, the cover $Y_{b_t}\to X_{b_t}$ is not a torsor under $G_{1,t} \times G_{2,t}$ unless at least one of the group schemes $G_{1,t}$ and $G_{2,t}$ is \'{e}tale
(see Theorem 1.2 and Theorem 3.5). However, at each intermediate degree $p$ stage, the corresponding cover is indeed a torsor where $c_{1,t}$ and $c_{2,t}$  (respectively $c'_{1,t}$ and $c'_{2,t}$) are the conductors associated to the torsor under the finite flat $R$-group schemes $G_{1,t}$ and $G_{2,t}$ respectively  (respectively $G'_{1,t}$ and $G'_{2,t}$).
Below is the picture when $Y_{b_t}$ is connected; the case we refer to as being {\it unibranched throughout}.
\begin{equation*}
 \xymatrix{
&& Y_{b_t} \ar[ddll]_{(G'_{1,t},c'_{1,t})}  \ar[ddrr]^{(G'_{2,t},c'_{2,t})} \\
\\
Y_{1, b_t} \ar[ddrr]_{(G_{1,t},c_{1,t})}  &&&& Y_{2,b_t} \ar[ddll]^{(G_{2,t},c_{2,t})} \\
\\
&&  X_{b_t}
}
\end{equation*}

Our main Theorem in this section compares the genus in a Galois cover of type $(p,p)$.  

\begin{thmC}Let $X := \mathrm{Spf} \left( \hat{\mathcal{O}}_x \right)$ be the formal germ of an $R$-curve at a closed point $x$ with $X_k$ reduced. Let $f :Y \rightarrow X$ be a Galois cover with group $\mathbb{Z}/p\mathbb{Z} \times \mathbb{Z}/p\mathbb{Z} $---that is, of type $(p,p)$---where $Y$ is normal and local and the special fibre $Y_k$ of Y is reduced.

Let $f_1 : Y_1 \rightarrow X$ and $f_2: Y_2 \rightarrow X$ be two generically disjoint degree $p$ Galois covers such that $Y$ is the compositum of $Y_1$ and $Y_2$. Let $\{X_{b_t}\}_{t\in I}$ denote the boundaries of $X$. The Galois cover $f_1$  induces a torsor $Y_{1, b_{t}} \rightarrow X_{b_t}$ under a finite and flat $R$-group scheme of type $p$ with conductor $c_{1,t}$ for each $t$. Similarly, $c'_{1,t}$ denotes the conductor associated to the torsor $Y_{b_{t}} \rightarrow Y_{1, b_{t}}$. In cases these torsors are trivial we write $c_{1,t}=1$ and $c'_{1,t}=1$, respectively.
We let $r_1$ (resp. $r_2$) denote the number of  ramified points in $Y_{1,K} \rightarrow X_K$ 
(resp. $Y_K \rightarrow Y_{1,K}$). 

If $y \in Y$ is the closed point of $Y$, then:
\[ 2g_y - 2 = p^2(2g_x - 2)+ d_\eta - d_s \]
where $g_y$ (resp. $g_x$) denotes the genus of $y$ (resp. $x$), $d_\eta:=(r_1+r_2)p(p-1)$ is the degree of the divisor of ramification in the morphism $f_K : Y_K \rightarrow X_K$ induced by $f$ on the generic fibre and 
\begin{equation*}\begin{split}
d_s = & \sum_{\stackrel{\text{boundary unibranched}}{\text{throughout}}} \left[(c'_{1,t}-1)(p-1)+(c_{1,t}-1)p(p-1)\right]\\ 
& \ \ \ + \sum_{\stackrel{\text{boundary unibranched}}{\text{then $p$-branched}} } (c_{1,t} -1)p(p-1) + \sum_{\stackrel{\text{boundary $p$-branched}}{\text{then unibranched}} } (c'_{1,t} -1)(p-1).  \\
\end{split}\end{equation*}
\end{thmC}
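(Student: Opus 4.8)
The plan is to reduce Theorem 2.2.1 to the degree $p$ genus formula (Theorem 2.1.2) applied twice, once to $f_1 : Y_1 \to X$ and once to $f' : Y \to Y_1$, and then to reorganize the resulting ramification and different terms. First I would apply Theorem 2.1.2 to $f_1 : Y_1 \to X$ to get $2g_{y_1} - 2 = p(2g_x - 2) + d_{\eta,1} - d_{s,1}$, where $d_{\eta,1}$ is the degree of the ramification divisor of $f_{1,K}$ and $d_{s,1} = \sum_{t} (c_{1,t} - 1)(p-1)$ runs over the boundaries of $X$. Then I would apply Theorem 2.1.2 again to $f' : Y \to Y_1$ (which is a degree $p$ Galois cover with $Y$ normal local, $Y_k$ reduced) to get $2g_y - 2 = p(2g_{y_1} - 2) + d_{\eta}' - d_{s}'$, where $d_{\eta}'$ is the degree of the ramification divisor of $f'_K$ and $d_{s}' = \sum_{s} (c'_{1,s} - 1)(p-1)$ runs over the boundaries of $Y_1$. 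Substituting the first into the second gives $2g_y - 2 = p^2(2g_x-2) + (p\, d_{\eta,1} + d_{\eta}') - (p\, d_{s,1} + d_{s}')$, so the whole problem becomes: identify $p\,d_{\eta,1} + d_{\eta}'$ with $d_\eta = (r_1+r_2)p(p-1)$, and identify $p\,d_{s,1} + d_{s}'$ with the stated $d_s$.

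For the generic ramification term, I would use that $f_{1,K}$ is a $\mathbb{Z}/p$-cover tamely... no, wildly ramified at $r_1$ points, each contributing different exponent $p-1$ (since $\mathrm{char}(K)=0$, ramification is tame at the generic fibre, so each ramified point of a degree $p$ cover contributes $p-1$ to the different); hence $d_{\eta,1} = r_1(p-1)$. Similarly $f'_K : Y_K \to Y_{1,K}$ is ramified at exactly the $r_2$ points of $Y$ lying over the $r_2$ branch points in $Y_1$ (the points where ramification occurs ``at the second stage''), so $d_{\eta}' = r_2(p-1)$. Then $p\, d_{\eta,1} + d_{\eta}' = p r_1 (p-1) + r_2(p-1)$; I must reconcile this with $(r_1 + r_2)p(p-1)$. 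The discrepancy means I should instead directly compute $d_\eta$ as the degree of the ramification divisor of the composite $f_K : Y_K \to X_K$ and match it against the two-step decomposition using $r = r_1 p + r_2$ and the decomposition-group analysis already carried out in the text (the two ``disjoint'' ramification situations, first-stage versus second-stage). I expect $d_\eta = \sum (\text{different exponents}) = r \cdot (p-1)$ is not right either in a type $(p,p)$ cover of degree $p^2$; rather, at a point ramified at the first stage the inertia in $Y_K/X_K$ has order $p$, contributing... I would carefully use the tower formula for differents on the generic fibre, $\mathfrak{d}_{Y_K/X_K} = \mathfrak{d}_{Y_K/Y_{1,K}} + f'^*\mathfrak{d}_{Y_{1,K}/X_K}$, take degrees, and combine with $r = r_1p + r_2$ to land on $(r_1+r_2)p(p-1)$.

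For the special (different) term, the key step is to rewrite $p\, d_{s,1} + d_{s}' = p\sum_{t\in I}(c_{1,t}-1)(p-1) + \sum_{s}(c'_{1,s}-1)(p-1)$, where the second sum is over boundaries $s$ of $Y_1$, and to reorganize it as a sum over the boundaries $t$ of $X$. For each boundary $X_{b_t}$ of $X$ I would use the trichotomy established just before the theorem: (i) \emph{unibranched throughout} — there is one boundary of $Y_1$ over $X_{b_t}$ and one boundary of $Y$ over that, contributing $p(c_{1,t}-1)(p-1) + (c'_{1,t}-1)(p-1)$; (ii) \emph{unibranched then $p$-branched} — one boundary $Y_{1,b_t}$ over $X_{b_t}$ but $p$ boundaries of $Y$ over it, on each of which $Y_{b_t} \to Y_{1,b_t}$ is \'etale so $c'=1$ and those terms vanish, leaving $p(c_{1,t}-1)(p-1)$; (iii) \emph{$p$-branched then unibranched} — $p$ boundaries of $Y_1$ over $X_{b_t}$, each with $c_{1} = 1$ (the first-stage torsor is \'etale over this boundary) so $p\sum_{\text{those } s}(c_{1,s}-1)(p-1) = 0$, and one boundary of $Y$ over each, contributing $\sum(c'_{1,s}-1)(p-1)$; and I would note that by the conductor compatibility (Theorem 1.1 / Corollary 1.2) all $p$ of these $c'$ agree, so this equals, after relabelling, the term $(c'_{1,t}-1)(p-1)$ per boundary. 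Summing over $t$ reproduces exactly the three sums in the statement of $d_s$. The main obstacle I anticipate is bookkeeping the change of index set from boundaries of $Y_1$ to boundaries of $X$ — in particular verifying that in case (ii) the $p$ upstairs boundaries really do carry $c'=1$, and in case (iii) that the $p$ downstairs boundaries carry $c=1$ — which requires invoking the \'etaleness statements from the decomposition-group discussion and Theorem 3.5 rather than pure computation. A secondary subtlety is making sure the genus $g_{y_1}$ of the intermediate point cancels cleanly and that Theorem 2.1.2 genuinely applies to $f' : Y \to Y_1$ (normality of $Y$, reducedness of $Y_k$, and $Y_1$ local — all of which are hypotheses or follow from them).
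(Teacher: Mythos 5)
Your strategy coincides with the paper's own proof: apply Theorem 2.1.2 once to $Y_1\to X$ and once to $Y\to Y_1$, substitute the first formula into the second, and sort the boundary contributions according to the unibranched/$p$-branched trichotomy, with $c=1$ (hence a vanishing term) whenever the induced cover of a boundary splits. The two points where you hesitate are exactly the two points where the bookkeeping is delicate, so let me record how the paper settles them. First, the generic term: the paper takes the second-stage generic different to be $r_2p(p-1)$, i.e.\ it effectively counts $r_2$ as the number of branch points of $X_K$ at which ramification occurs at the second stage (each such point has $p$ ramified points of $Y_K$ above it, each with tame different exponent $p-1$). Under that convention your tower-of-differents computation gives $p\cdot r_1(p-1)+p\cdot r_2(p-1)=(r_1+r_2)p(p-1)$ and the gap closes; under the literal reading of ``$r_2$ is the number of ramified points of $Y_K\to Y_{1,K}$'' it lands on $(r_1p+r_2)(p-1)$ instead, which is not the stated $d_\eta$. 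So your instinct that there is a discrepancy to resolve is correct, but the resolution is a normalisation of $r_2$, not the tower formula by itself. Second, the boundary term in the $p$-branched-then-unibranched case: you are right that the $p$ conjugate boundaries of $Y_1$ above $X_{b_t}$ carry equal conductors, but a sum of $p$ equal terms is $p(c'_{1,t}-1)(p-1)$, not a single $(c'_{1,t}-1)(p-1)$ ``after relabelling''; the paper's proof also indexes the second-stage sum by boundaries of $X$ and records one term there, so your treatment is consistent with (and no more careful than) the paper's, but neither argument as written accounts for that multiplicity. Everything else --- the applicability of Theorem 2.1.2 to $Y\to Y_1$, the cancellation of $g_{y_1}$, and the vanishing of the conductor terms at fully $p$-branched boundaries --- is handled exactly as in the paper.
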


\begin{proof}

By Theorem 2.1.2 we have the following genus formula for the degree $p$ Galois cover $Y_1 \rightarrow X$ expressing $g_{y_1}$ in terms of $g_x$ where $y_1$ is the point of $Y_1$ above $x$:
$2g_{y_1} -2 = p (2g_{x}-2)+r_1(p-1) - \sum_{t \in I} (c_{1,t}-1)(p-1). $
Each boundary $X_{b_t}$ is either unibranched or $p$-branched in $Y_1$ and so we can break up the $d_s$ summation as follows:
%\begin{equation*}\begin{split}
$$2g_{y_1}-2 = p (2g_{x}-2)+r_1(p-1) - \sum_{\stackrel{t \in I}{\text{$X_{b_t}$ unibranched}}} (c_{1,t}-1)(p-1) - \sum_{\stackrel{t \in I}{\text{$X_{b_t}$ $p$-branched}}} (c_{1,t}-1)(p-1). $$
%\end{split}\end{equation*}
Also by Theorem 2.1.2 we have the following genus formula for the degree $p$ Galois cover $Y \rightarrow Y_1 $ expressing $g_{y}$ in terms of $g_{y_1}$:
$2g_{y} -2 = p \left( 2g_{y_1}-2 \right)+r_2p(p-1) - \sum_{t \in I} (c'_{1,t}-1)(p-1). $
Again, we rewrite the $d_s$ summation into unibranched or $p$-branched cases:
%\begin{equation*}\begin{split}
$$2g_y-2  = p (2g_{y_1}-2)+r_2p(p-1)- \sum_{\stackrel{t \in I}{\text{$Y_{1, b_t}$ unibranched}}} (c'_{1,t}-1)(p-1) - \sum_{\stackrel{t \in I}{\text{$Y_{1, b_t}$ $p$-branched}}} (c'_{1,t}-1)(p-1).$$
%\end{split}\end{equation*}

Tracing a boundary $X_{b_t}$ through the entire type $(p,p)$ Galois cover $f: Y \rightarrow X$, keeping in mind that under the cover $Y_1 \rightarrow X$ the boundary can be $p$-branched or unibranched and, likewise, under the cover $Y \rightarrow Y_1$, we have four possible cases which can arise. In particular, the boundary is unibranched throughout, unibranched and then $p$-branched $p$-branched and then unibranched or finally $p$-branched throughout. Now, substituting, our first genus formula expressing $g_{y_1}$ in terms of $g_x$ into the second genus formula expressing $g_{y}$ in terms of $g_{y_1}$, will give us a genus formula expressing $g_{y}$ in terms of $g_x$, as required:
%\begin{equation*}\begin{split}
 
 $2g_{y} -2 = p (2g_{y_1}-2)+r_2p(p-1) -  \sum_{\stackrel{t \in I}{\text{$Y_{1, b_t}$ uni.}}} (c'_{1,t}-1)(p-1) - \sum_{\stackrel{t \in I}{\text{$Y_{1, b_t}$ $p$-b.}}} (c'_{1,t}-1)(p-1) $ 
\newline
$= p \left( p (2g_{x}-2)+r_1(p-1) - \sum_{\stackrel{t \in I}{\text{$X_{b_t}$ uni.}}} (c_{1,t}-1)(p-1) - \sum_{\stackrel{t \in I}{\text{$X_{b_t}$ $p$-b.}}} (c_{1,t}-1)(p-1) \right) 
+r_2p(p-1) -  \sum_{\stackrel{t \in I}{\text{$Y_{1, b_t}$ uni.}}} (c'_{1,t}-1)(p-1) - \sum_{\stackrel{t \in I}{\text{$Y_{1, b_t}$ $p$-b.}}} (c'_{1,t}-1)(p-1)  
= p^2 (2g_{x}-2)+\left(r_1+r_2 \right) p(p-1) - \sum_{\stackrel{t \in I}{\text{uni., uni.}}} \left [(c_{1,t}-1)p(p-1) + (c'_{1,t}-1)(p-1)\right] 
- \sum_{\stackrel{t \in I}{\text{uni., $p$-b.}}} (c_{1,t}-1)p(p-1)  - \sum_{\stackrel{t \in I}{\text{$p$-b., uni.}}} (c'_{1,t}-1)(p-1)  - \sum_{\stackrel{t \in I}{\text{$p$-b., $p$-b.}}} 0. $
%\end{split}\end{equation*}
%Recall that if the boundary decomposes then $(c-1)(p-1)=0$. 

So, we obtain a genus formula in the form 
$2g_y - 2 = p^2(2g_x - 2)+ d_\eta - d_s $
where $d_\eta$ and $d_s$ are as expressed in the statement of the Theorem.
\end{proof}
%Note that we have produced a formula which depends on ramification data and, by Corollary 1,  solely on base level conductors. Let us now make some remarks on some of the assumptions on which the theorem operates. First, we require that each individual stage is a torsor so that we can apply the vanishing cycles formula at each stage. In particular, $Y/Y_i$ and $Y_i/Y$ must both be torsors and so we require that the special fibres $Y_k$ and $Y_{i,k}$ of $Y$ and $Y_i$ respectively be reduced schemes. However, if $Y_k$ is reduced then $Y_{i,k}$ are automatically reduced.  For, to be reduced, it means that when localised at the generic point of the special fibre, we obtain a DVR with uniformiser $\pi$ but as everything dominates $R$, $\pi$ must also be the uniformiser at the corresponding points in $Y_i$. Secondly, the assumption $Y$ is local means $Y$ is the spectrum of a local ring (a ring with a unique maximal ideal). This ensures there is one point above $x$, namely $y$. If there were several points, albeit finitely many, sitting above $x$ then $Y$ would be a semi local ring (a ring with finitely many maximal ideals).  

For illustration purposes, we explain this picture on the boundary in the case of a unibranched throughout cover above
an open disc and, so in what follows $X=\mathrm {Spf} R[[T]]$ and $X_{b_t}=\mathrm {Spf} R[[T]]\{\frac {1}{T}\}$.

%so that for purposes of simplicity we can assume that $x$ is smooth and we are working with one boundary.

\textbf{Case: unibranched throughout.}
% \begin{center}
%  \includegraphics[width=200px]{images/uniuni.png}
 %\end{center}
Write, as above, $c_1$ for the conductor in the first stage and $c'_1$ for the conductor in the
second stage above $X_{b_t}$. 
By Corollary 2.1.3 we have that
$g_{y_1} = \frac{(r_1-c_1-1)(p-1)}{2} $
and by the genus formula given in Theorem 2.1.2 we can write
$2g_{y} -2 = p (2g_{y_1}-2)+r_2p(p-1) - (c_1'-1)(p-1)$
and so substituting the first equation into the second results in
%\begin{equation*}\begin{split}
$2g_y - 2 =  p (2g_{y_1}-2)+r_2p(p-1) - (c_1'-1)(p-1)
=  p \left( (r_1-c_1-1)(p-1)-2 \right)+r_2p(p-1) - (c_1'-1)(p-1) 
%&= -2p^2+2p^2-2p+(r_1+r_2)p(p-1)-(p-1)(c_1'-1)+p(c_1+1) 
= \underbrace{-2p^2+2p^2 }_{0}-2p+(r_1+r_2)p(p-1)-(p-1) \left(c_1'-1+p(c_1+1) \right) 
= p^2(0-2)+(r_1+r_2)p(p-1)-(p-1)\left( c_1'-1+p(c_1+1) -2p \right) 
= p^2(0-2)+\underbrace{(r_1+r_2)p(p-1)}_{d_{\eta}}-\underbrace{(p-1)\left( (c_1'-1)+p(c_1-1) \right)}_{d_s}. $

\begin{center}
\line(1,0){250}
\end{center}

The results from the above discussion are summarised in the above table.

\begin{table}[h]
\centering
\caption[Values for $d_{\eta}$ and $d_s$ in the type $(p,p)$ setting above one boundary]{Values for $d_{\eta}$ and $d_s$ in the $(p,p)$ setting above one boundary}
\begin{tabular}{ | c | c | c | c | }
\hline
1st step & 2nd step & $d_{\eta}$ & $d_s$  \\
\hline
 uni & uni & $(r_1+r_2)p(p-1)$ & $ (c_1'-1)(p-1)+(c_1-1)p(p-1)$  \\
 uni & $p$ & $(r_1+r_2)p(p-1)$ & $p(p-1)(c-1)$ \\ 
 $p$ & uni & $(r_1+r_2)p(p-1)$ & $(p-1)(c-1)$ \\
 $p$ & $p$ & $(r_1+r_2)p(p-1)$ & 0 \\ \hline 
\end{tabular}
\end{table}
From this we can deduce that the general form for the genus formula for $(p,p)$-type covers in case $X=\mathrm {Spf} (\hat O_x)$ has a {\it unique} boundary (or equivalently $X_k$ is unibranch)
is given by 
\[ 2g_y - 2 = p^2(2g_x -2)+d_{\eta} - d_s \]
where $d_\eta=(r_1+r_2)p(p-1)$ and where
\[ d_s =\begin{cases} 
      (c'_{1}-1)(p-1)+(c_{1}-1)p(p-1) & \text{boundary unibranched throughout} \\
      (c-1)p(p-1) & \text{boundary unibranched, then $p$-branched} \\
     (c -1)(p-1)  & \text{boundary $p$-branched, then unibranched} \\
     0 & \text{boundary $p$-branched throughout} 
   \end{cases}
\]

We can derive from the above formula some interesting results:

\begin{proA} Let $X=\mathrm{Spf} \left( R[[T]] \right)$ be the formal germ of an $R$-curve at a smooth point $x$ and let $f: Y \rightarrow X$ be a Galois cover with group  $\mathbb{Z}/p\mathbb{Z} \times \mathbb{Z}/p\mathbb{Z}$. Assume $Y$ is normal and local and that the special fibre $Y_k$ of Y is reduced. Let $X_b = \mathrm{Spf}(R[[T]]\{T^{-1}\})$ be the boundary of $X$ and $f_b:Y_b \rightarrow X_b$ the induced Galois cover on the boundaries.  Let $y$ be the unique closed point of $Y_k$ and $d_\eta:=(r_1+r_2)p(p-1)$ be the degree of the divisor of ramification in the morphism $f_K : Y_K \rightarrow X_K$ induced by $f$ on the generic fibre and $c_1$ and $c'_1$ are as in cases 2 and 3 below and where $c$ is the only acting conductor at the relevant unibranched stage. Then:
\begin{enumerate}
\item If $Y_{k}$ is unibranched above $x$ then $g_y= \frac{(p(r_1+r_2-c_1-1)-c'_1-1)(p-1)}{2}$.
\item The morphism $Y_{k}\to X_k$ is unibranched and then $p$-branched above $x$ then 

$g_y= \frac{(p(r_1+r_2-c-1)-2)(p-1)}{2}$.
\item The morphism $Y_{k}\to X_k$ is $p$-branched and then unibranched above $x$ then 

$g_y= \frac{(p(r_1+r_2-2)-c-1)(p-1)}{2}$.
\item The morphism $Y_{k}$ is $p^2$-branched above $x$ then $g_y= \frac{(p(r_1+r_2-2)-2)(p-1)}{2}$.
\end{enumerate}
\end{proA}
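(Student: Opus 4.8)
The plan is to specialise the genus formula of Theorem 2.2.1, in the refined form derived just above the Proposition for a germ with a single boundary, to the present case $g_x=0$, and then to simplify the resulting expression separately in each of the four branching patterns.

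First I would note that since $x$ is a smooth point of $X$ one has $\delta_x=0$ and $r_x=1$, hence $g_x=\delta_x-r_x+1=0$, and moreover $X_k$ is unibranch at $x$, so $X=\mathrm{Spf}(R[[T]])$ has the unique boundary $X_b=\mathrm{Spf}(R[[T]]\{T^{-1}\})$ and the case-by-case formula for $d_s$ displayed immediately before the Proposition applies. Feeding $g_x=0$ and $d_\eta=(r_1+r_2)p(p-1)$ into $2g_y-2=p^2(2g_x-2)+d_\eta-d_s$ leaves the master relation
\[
2g_y-2=-2p^2+(r_1+r_2)p(p-1)-d_s,
\]
so that only the substitution of $d_s$ and an elementary rearrangement remain in each case.

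In case (1) I would insert $d_s=(c'_1-1)(p-1)+(c_1-1)p(p-1)$; the terms not involving $r_1,r_2,c_1,c'_1$ collapse to $2-2p^2+(p-1)+p(p-1)=-(p-1)(p+1)$, and after extracting the common factor $(p-1)$ the relation becomes exactly $g_y=\frac{(p(r_1+r_2-c_1-1)-c'_1-1)(p-1)}{2}$. In case (2) (unibranched then $p$-branched) the surviving conductor is the first-stage one, so $c=c_1$ and $d_s=(c-1)p(p-1)$, and the same bookkeeping---now the constant part is $2-p-p^2=-(p-1)(p+2)$---gives $g_y=\frac{(p(r_1+r_2-c-1)-2)(p-1)}{2}$. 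In case (3) ($p$-branched then unibranched) the surviving conductor is the second-stage one, so $c=c'_1$ and $d_s=(c-1)(p-1)$, with constant part $1+p-2p^2=-(p-1)(2p+1)$, yielding $g_y=\frac{(p(r_1+r_2-2)-c-1)(p-1)}{2}$. In case (4) both stages are $p$-branched, $d_s=0$, and the master relation at once gives $g_y=\frac{(p(r_1+r_2-2)-2)(p-1)}{2}$.

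None of these computations is substantial; the only places that demand care are the bookkeeping ones---identifying in cases (2) and (3) which of the two stages contributes the surviving conductor (and hence whether $d_s$ weights it by $p(p-1)$ or by $(p-1)$), and verifying in each case that the purely numerical contribution is divisible by $(p-1)$ so that $2g_y$ visibly has the claimed shape. As a consistency check one may observe that in case (1) the choices $c'_1=1$, $r_2=0$ reduce the formula to $g_y=p\,g_{y_1}-(p-1)$ with $g_{y_1}=\frac{(r_1-c_1-1)(p-1)}{2}$ as in Corollary 2.1.3, which is Riemann-Hurwitz for an unramified degree $p$ second step.
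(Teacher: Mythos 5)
Your proposal is correct and is exactly the paper's argument: the paper's proof is the one-line remark that the result ``follows directly from rearranging the type $(p,p)$ vanishing cycles formula with $g_x=0$,'' and you have simply carried out that rearrangement explicitly (your four constant-term factorisations $-(p-1)(p+1)$, $-(p-1)(p+2)$, $-(p-1)(2p+1)$ and $2-2p^2$ all check out, as does the identification of $c$ with $c_1$ resp. $c'_1$ in cases 2 and 3).
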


\begin{proof}Follows directly from rearranging the type $(p,p)$ vanishing cycles formula with $g_x=0$.
\end{proof}

In this situation, we have the following test for whether $y$ is a smooth point or not.

\begin{corE}With the same assumptions as in Proposition 2.2.2, 
$y$ is a smooth point if and only if we are in the case 1 of loc. cit. and $p(r_1+r_2-1)=1+ c'_1 + c_1p$  holds.

\end{corE}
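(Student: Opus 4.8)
The plan is to derive the statement directly from Proposition 2.2.2, part (1), together with the definition of $g_x = 0$ for a smooth point and the characterisation of smoothness via the genus invariant. Recall from the discussion just after Definition 2.1.1 that $y$ is a smooth point precisely when $g_y = 0$ (equivalently $\delta_y = r_y - 1$ with $r_y = 1$, i.e. $\delta_y = 0$). So the task reduces to solving $g_y = 0$ in each of the four cases of Proposition 2.2.2.

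First I would observe that cases 2, 3, 4 of Proposition 2.2.2 cannot produce a smooth point, except possibly degenerately, and more to the point: in those cases $Y_k \to X_k$ is (partially or fully) $p$-branched above $x$, so $y$ is not the unique point of $Y_k$ above $x$ — there are several points of $\widetilde{Y}_k$ above $x$, forcing $r_y > 1$, hence $y$ is a multiple point rather than a smooth point. Here I should be a little careful about what ``$y$ smooth'' means when $Y_k$ has several branches at $y$; the intended reading (consistent with the paragraph after Definition 2.1.1) is that $g_y = 0$ and $y$ is unibranched, i.e. we are genuinely in case 1. I would state this explicitly: smoothness of $y$ forces the unibranched-throughout case, which is case 1.

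Then, in case 1, Proposition 2.2.2 gives
\[
g_y = \frac{\bigl(p(r_1+r_2-c_1-1)-c'_1-1\bigr)(p-1)}{2}.
\]
Since $p - 1 \neq 0$, the condition $g_y = 0$ is equivalent to
\[
p(r_1+r_2-c_1-1) - c'_1 - 1 = 0,
\]
i.e. $p(r_1+r_2) - pc_1 - p - c'_1 - 1 = 0$, which rearranges to $p(r_1+r_2) - p = 1 + c'_1 + pc_1$, that is $p(r_1+r_2-1) = 1 + c'_1 + c_1 p$, exactly the asserted identity. So the proof is: (i) note smooth $\Rightarrow$ unibranched $\Rightarrow$ case 1; (ii) in case 1, use that $g_y = 0$ iff $y$ is smooth; (iii) set the formula of Proposition 2.2.2(1) equal to zero and divide by $p-1$; (iv) rearrange.

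The only genuine subtlety — the ``main obstacle'' — is step (i): justifying that a smooth point must arise from the unibranched-throughout case, and equally that $g_y = 0$ together with case 1 is genuinely equivalent to $y$ being smooth (one must know $r_y = 1$ in case 1, which is exactly what ``unibranched throughout'' encodes, and then $g_y = 0 \iff \delta_y = 0 \iff y$ smooth by the remark after Definition 2.1.1 and the normality/reducedness hypotheses on $Y$). Once that bookkeeping is in place the rest is a one-line algebraic manipulation, so I would spend essentially all the prose on the branch-counting argument and treat the arithmetic as immediate.
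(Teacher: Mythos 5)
Your proof is correct and follows essentially the same route as the paper's: both characterise smoothness of $y$ by $g_y=0$ together with $r_y=1$ (which forces the unibranched case 1 of Proposition 2.2.2), and then set the case-1 genus formula equal to zero and rearrange. Your extra paragraph ruling out cases 2--4 by branch counting merely makes explicit what the paper's argument leaves implicit in using $r_y=1$.
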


\begin{proof}

$(\Rightarrow)$ Suppose $y$ is a smooth point.  Then $\delta_y=\mathrm{dim}_k ( \tilde{\mathcal{O}_y} / \mathcal{O}_y )=0$ and $r_y=1$ since there is one branch and so $g_y = \delta_y - r_y +1=0-1+1=0$ . If $g_y=0$ in the unibranched case then, by the previous proposition, $p(r_1+r_2-c_1-1)-c'_1-1=0$ which rearranges to $p(r_1+r_2-1)=1+ c'_1 + c_1p$.

$(\Leftarrow) $ Suppose that we are in case 1 and $p(r_1+r_2-1)=1+ c'_1 + c_1p$, then $g_y=0$. As there is one branch $r_y=1$ and 
so we have that $\delta_y=g_y + r_y -1=0+1-1$ which in turn implies $y$ is a smooth point.
\end{proof}

\section*{\S 3. On the existence of a torsor structure}
In this section we discuss the question of the existence of a torsor structure for a Galois cover of type $(p,p)$ between formal normal $R$-schemes.
%In this paper $R$ denotes a \emph{complete discrete valuation ring}, with uniformiser $\pi$, residue field $k$ of characteristic $p>0$, 
%and fraction field $K := \Fr R$.
%For an $R$-(formal)scheme $Z$ we write $Z_K\defeq Z\times _{\Spec R}\Spec K$ and $Z_k\defeq Z\times _{\Spec R}\Spec k$ for 
%the generic and special fibre, respectively, of $Z$. (In the case where $Z$ is a formal $R$-scheme 
%by its generic fibre $Z_K$ we mean the associated rigid analytic space.) 
In addition to the notations set at the beginning of this paper, in this section we allow $R$ to be a {\it complete discrete valuation ring of equal characteristic $p>0$} with 
{\it algebraically closed residue field} $k$.
Let $X$ be a (\emph{formal}) $R$-scheme of finite type which is \emph{normal}, geometrically connected, and flat over $R$. We further assume 
that the special fibre $X_k$ of $X$ is \emph{integral}.
Let $f_K:Y_K\rightarrow X_K$ be an \emph{\'etale torsor} under a finite \'etale $K$-group scheme $\widetilde G$ of rank $p^t$ ($t\ge 1$), 
with $Y_K$ \emph{geometrically connected}, and $f:Y\rightarrow X$ the corresponding 
morphism of \emph{normalisation}. (Thus, $Y$ is the normalisation of $X$ in $Y_K$.)
We are interested in the following question.

\begin{queA} When is $f:Y\rightarrow X$ a \emph{torsor} under a finite and flat $R$-group scheme $G$ which extends $\widetilde G$, i.e., with $G_K=\widetilde G$? 
\end{queA}

The following is well known.

\begin{thmD} \emph{(Proposition 2.4 in [Sa\"\i di2]; Theorem 5.1 in [Tossici])} If $\chr(K)=0$ we assume that $X$ is \emph{locally factorial}. Let $\eta$ be the generic point of $X_k$ and ${\mathcal{O}}_{\eta}$ the local ring of $X$ at $\eta$, which is a discrete valuation ring with fraction field $K(X)$: the function field of $X$. Let $f_K : Y_K \rightarrow X_K$ be an \'etale torsor under a finite \'etale $K$-group scheme $\widetilde G$ of {\bf rank} $\bold p$, with $Y_K$ connected, and let $K(X) \rightarrow L$ be the corresponding extension of function fields. Assume that the ramification index above ${\mathcal{O}}_\eta$ in the field extension  $K(X) \rightarrow L$ equals 1. Then $f : Y \rightarrow X$ is a torsor under a finite and flat $R$-group scheme $G$ of rank $p$ which extends $\widetilde{G}$ (i.e., with $G_K=\widetilde{G}$). 
\end{thmD}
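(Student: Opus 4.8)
The plan is to reduce the statement to a local computation at the generic point $\eta$ of $X_k$ and then to globalise by a purity argument. I would first observe that $f$ is automatically flat at $\eta$: the semilocalisation $B$ of $\mathcal{O}_Y$ at the points lying over $\eta$ is the integral closure of the discrete valuation ring $\mathcal{O}_\eta$ in $L$, hence (by excellence of $\mathcal{O}_\eta$) a finite torsion-free, and therefore free, $\mathcal{O}_\eta$-module of rank $[L:K(X)]=p$. Together with the fact that $f_K$ is \'etale this shows $f$ is finite flat over an open $U\subseteq X$ with $X\setminus U\subseteq X_k\setminus\{\eta\}$, so $\mathrm{codim}_X(X\setminus U)\ge 2$. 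It therefore suffices to prove two things: (i) $\Spec B\to\Spec\mathcal{O}_\eta$ is a torsor under the base change of a finite flat $R$-group scheme $G$ of order $p$ with $G_K=\widetilde G$; and (ii) once (i) is known, the torsor structure spreads out to a neighbourhood of $\eta$, agrees over the overlap with the given torsor $f_K$ (both being pinned down on the generic point of $X$, where the relevant $\mathrm{Aut}(G)$ is the constant group $\mathbb{F}_p^{\times}$), hence is defined over an open $U'\subseteq X$ with $\mathrm{codim}_X(X\setminus U')\ge 2$; then purity for torsors under a finite flat $R$-group scheme over the normal---and, when $\chr(K)=0$, locally factorial---scheme $X$ extends it to a $G_X$-torsor $Y'\to X$ with $Y'$ normal and $Y'_K=Y_K$, so that $Y'=Y$ by uniqueness of normalisation and $f$ is a $G$-torsor.

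For step (i) I would analyse the degree-$p$ separable extension of discrete valuation rings $B/\mathcal{O}_\eta$ of ramification index $1$ directly, exactly as the Background section does for the boundary rings $A=R[[T]]\{T^{-1}\}$, but now over a base whose residue field $\kappa(\eta)$---the function field of $X_k$---is neither perfect nor algebraically closed. When $\chr(K)=0$ (so $\mu_p\subset K$) the extension is of Kummer type, $L=\mathrm{Frac}(\mathcal{O}_\eta)(\sqrt[p]{u})$ after the appropriate twist, and the hypothesis $e=1$ forces $p\mid v_\eta(u)$; modifying $u$ by a $p$-th power one may take $u\in\mathcal{O}_\eta^{\times}$, and then $\Spec\mathcal{O}_\eta[z]/(z^p-u)\to\Spec\mathcal{O}_\eta$ is visibly a $\mu_p$-torsor, and one argues similarly for the $\mathcal{H}_n$ and \'etale ($\mathbb{Z}/p\mathbb{Z}$) shapes of the torsor equation. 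The point is that we need only exhibit \emph{some} order-$p$ torsor structure, not a fully simplified normal form, so we never need to extract $l$-th roots over $\kappa(\eta)$---the single place where the Background reductions used algebraic closedness. When $\chr(K)=p$ the extension is an Artin-Schreier extension $z^p-z=f$, and again $e=1$ allows one to normalise $f$ so that either $B/\mathcal{O}_\eta$ is \'etale (residue extension separable or split: a $\mathbb{Z}/p\mathbb{Z}$-torsor) or the equation exhibits $B$ as a torsor under one of the infinitesimal order-$p$ group schemes, which is what occurs when the residue extension is purely inseparable. In each case the acting $\mathcal{O}_\eta$-group scheme is the base change of a standard finite flat $R$-group scheme of order $p$, and, its generic fibre being $\widetilde G\otimes_K\mathrm{Frac}(\mathcal{O}_\eta)$, it comes from a finite flat $R$-group scheme $G$ with $G_K=\widetilde G$.

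I expect the main obstacle to be the local analysis in step (i) when the residue extension $\kappa(\eta)\hookrightarrow\kappa(B)$ is purely inseparable---precisely the phenomenon flagged in the introduction---since over the imperfect field $\kappa(\eta)$ one cannot freely adjust the torsor equation by extracting roots, so one must argue that $e=1$ by itself forces the equation into a shape from which the (possibly infinitesimal) group scheme structure can be read off; this is exactly the content of the cited results of [Sa\"\i di2] and [Tossici]. A secondary point needing care is step (ii): one must have at hand a purity statement for torsors under a \emph{finite flat} rather than merely \'etale $R$-group scheme, and it is here that the local factoriality hypothesis enters when $\chr(K)=0$.
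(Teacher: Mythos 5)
The paper does not prove Theorem 3.2: it is quoted verbatim from the literature (Proposition 2.4 in [Sa\"\i di2] and Theorem 5.1 in [Tossici], with [Sa\"\i di3] for equal characteristic), so there is no in-paper proof to compare against. Your outline is nevertheless a faithful reconstruction of the strategy of those references: reduce to the discrete valuation ring $\mathcal{O}_\eta$ at the generic point of $X_k$, use the ramification-index-one hypothesis together with the classification of degree-$p$ extensions of such a DVR (Kummer in mixed characteristic, Artin--Schreier in equal characteristic $p$) to exhibit a torsor structure under one of the standard finite flat $R$-group schemes of order $p$, and then extend from an open set whose complement has codimension $\ge 2$. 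Your two flagged difficulties are the right ones, and the second deserves emphasis: ``purity for torsors under a finite flat $R$-group scheme'' is not an off-the-shelf theorem for the non-\'etale group schemes $\mu_p$ and $\mathcal{H}_n$, and proving it is essentially the entire content of the cited results. In [Sa\"\i di2] this step is carried out not by citing a purity statement but by an explicit divisor manipulation: local factoriality lets one write $\mathrm{div}_X(u)=pD+nX_k$ with $D$ locally principal and $X_k$ the unique vertical prime (here integrality of $X_k$ is used), so after multiplying $u$ by local $p$-th powers the Kummer equation acquires a global normal form whose degeneration type is read off at $\eta$; the resulting explicit equation $(1+\pi^nZ)^p=1+\pi^{np}u'$ then defines the $G$-torsor on all of $X$ at once, with no separate glueing or extension step. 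So your plan is correct in substance, but step (ii) as you state it presupposes the lemma it is meant to establish; to make it a proof you would either have to supply the fppf-cohomological purity argument (via the Kummer sequence and $\mathrm{Pic}(U)=\mathrm{Pic}(X)$, $\mathcal{O}^{*}(U)=\mathcal{O}^{*}(X)$ for $U$ with small complement, and its analogue for $\mathcal{H}_n$) or revert to the explicit global normalisation of the torsor equation as in the references.
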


Strictly speaking the above references treat the case where $\chr(K)=0$. For the equal characteristic $p>0$ case see
[Sa\"\i di3], Theorem 2.2.1. Theorem 3.2 also holds when $X$ is the formal spectrum of a complete discrete valuation ring (cf. 
[Sa\"\i di1], Proposition 2.3, and the references therein in the unequal characteristic case, as well as Proposition 2.3.1 in 
[Sa\"\i di4] in the equal characteristic $p>0$ case). 
It is well known that the analog of Theorem 3.2 is \emph{false} in general. 
There are counterexamples to the statement in Theorem 3.2 where $\widetilde G$ is cyclic of rank $p^2$, 
see [Tossici], Example 6.2.12, for instance.

Next, we describe the setting in this section. Let $n\ge 1$, and for  $i\in \{1,\cdots,n\}$ let 
$$f_{i,K}:X_{i,K} \rightarrow X_K$$ 
be an \'etale torsor under an \'etale finite \emph{commutative} $K$-group scheme $\widetilde G_i$, with $X_{i,K}$ \emph{geometrically connected},
such that the $\{f_{i,K}\}_{i=1}^n$ are \emph{generically pairwise disjoint}, i.e. $f_{i,K}$ and $f_{j,K}$ are generically disjoint for $i\neq j$. 
Assume that $f_{i,K}:X_{i,K} \rightarrow X_K$ extends to a torsor
$$f_i:X_i\rightarrow X$$ 
under a finite and flat (necessarily commutative) $R$-group scheme $G_i$ with $(G_i)_K=\widetilde G_i$, 
and with $X_i$ \emph{normal}, $\forall i\in \{1,\cdots,n\}$. (Thus, $X_i$ is the normalisation of $X$ in $X_{i,K}$.)
Let 
$$\widetilde {X}_K\defeq X_{1,K}\times _{X_K}X_{2,K}\times _{X_K} \cdots \times _{X_K}X_{n,K},$$ 
and $\widetilde X$ the \emph{normalisation} of $X$ in $\widetilde X_K$. Thus, $\widetilde X_K$ is the generic fibre
of $\widetilde X$ and we have the following commutative diagrams

\begin{equation*}
 \xymatrix{
 && \widetilde{X}_K \ar@{->}[ddll] \ar@{->}[ddl] \ar@{->}[dd] \ar@{->}[ddr] \ar@{->}[ddrr] \\
  \\
X_{1,K} \ar@{->}[ddrr]_{\widetilde{G}_{1}} & X_{2,K} \ar@{->}[ddr]^{\widetilde{G}_{2}}  & X_{3,K} \ar@{->}[dd] & ... \ar@{->}[ddl] & X_{n,K} \ar@{->}[ddll]^{\widetilde{G}_{n}} \\
  \\
&& X_K 
}
\end{equation*}

and
\begin{equation*}
 \xymatrix{
&& \widetilde{X} \ar@{->}[d]  \\
&& X_{1} \times_X X_{2} \times_X  ... \times_X X_{n} \ar@{->}[dll] \ar@{->}[dl] \ar@{->}[d] \ar@{->}[dr] \ar@{->}[drr]  \\
X_{1} \ar@{->}[ddrr]_{G_{1}} & X_{2} \ar@{->}[ddr]^{G_{2}}  & X_{3} \ar@{->}[dd]^{G_{3}} & ...  \ar@{->}[ddl] & X_{n} \ar@{->}[ddll]^{G_{n}} \\
\\
&& X
}
\end{equation*}
where $X_{1} \times_X X_{2} \times_X \cdots \times_X X_{n}$ denotes the fibre product of the $\{X_i\}_{i=1}^n$ over $X$, the morphism 
$\widetilde X\rightarrow X_{1} \times_X X_{2} \times_X \cdots \times_X X_{n}$ is birational and is induced by the natural \emph{ finite} morphisms
$\widetilde X\rightarrow X_{i}$, $\forall i\in \{1,\cdots,n\}$. Note that 
$f_K:\widetilde X_K\rightarrow X_K$ (resp. $\tilde f:X_{1} \times_X X_{2} \times_X \cdots \times_X X_{n} \rightarrow X$) is a torsor 
under the \'etale finite commutative $K$-group scheme $\widetilde G\defeq \widetilde G_1 \times_{\Spec K} \widetilde G_2 \times_{\Spec K} \cdots 
\times_{\Spec K} \widetilde G_n$
(resp. a torsor under the finite and flat commutative $R$-group scheme $G_1 \times_{\Spec R} G_2 \times_{\Spec R} \cdots \times_{\Spec R} G_n$), 
as follows easily from the various definitions. Note that $\left (G_1 \times_{\Spec R} G_2 \times_{\Spec R} \cdots \times_{\Spec R} G_n 
\right )_K=\widetilde G$. 

In this setup Question 1 reads as follows.

\begin{queB} When is $f:\widetilde{X} \rightarrow X$ a torsor under a finite and flat (necessarily commutative) $R$-group scheme 
$G$ which extends $\widetilde G$, i.e., with $G_K=\widetilde G$?
\end{queB}

Our main result in this paper is the following. 

\begin{thmH} We use the same notations as above. Assume that $\widetilde{X}_k$ is {\bf reduced}. Then the following three statements are equivalent.
\begin{enumerate}
\item $f:\widetilde{X}\rightarrow X$ is a torsor under a finite and flat commutative $R$-group scheme $G$, in which case  $G=G_1 \times_{\Spec R} \cdots \times_{\Spec R} G_n$ necessarily.
\item $\widetilde{X}= X_{1} \times_X X_{2} \times_X  \cdots \times_X X_{n}$, in other words 
$X_{1} \times_X X_{2} \times_X  \cdots \times_X X_{n}$ is normal.
\item $\left( X_{1} \times_X X_{2} \times_X \cdots\times_X X_{n} \right)_k$ is reduced.
\end{enumerate}
\end{thmH}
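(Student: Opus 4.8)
The plan is to prove the chain of implications $(2)\Rightarrow(1)\Rightarrow(3)\Rightarrow(2)$, reducing everything to the local ring $\mathcal{O}_\eta$ at the generic point $\eta$ of $X_k$ and then invoking Theorem 3.2 for each factor. First, $(2)\Rightarrow(1)$ is essentially formal: if $\widetilde X = X_1\times_X\cdots\times_X X_n$, then since this fibre product is a torsor under $G_1\times_R\cdots\times_R G_n$ (as noted in the excerpt, this follows from the definitions), $f$ acquires the desired torsor structure, and the group scheme is forced to be $G_1\times_R\cdots\times_R G_n$ because its generic fibre must be $\widetilde G = \widetilde G_1\times_K\cdots\times_K\widetilde G_n$ and a finite flat $R$-group scheme of this rank extending $\widetilde G$ is determined (flat closure / schematic closure argument, or uniqueness of finite flat models once the rank is $p$ on each factor). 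The implication $(1)\Rightarrow(3)$ is also immediate: a torsor under a finite flat $R$-group scheme is faithfully flat, hence $\widetilde X_k$ is a torsor under $G_k$ over the integral scheme $X_k$, and since we assumed $\widetilde X_k$ reduced and $\widetilde X = (X_1\times_X\cdots\times_X X_n)$ would then have to hold — wait, more directly: if $(1)$ holds then by the same uniqueness $\widetilde X = X_1\times_X\cdots\times_X X_n$, so $(X_1\times_X\cdots\times_X X_n)_k = \widetilde X_k$ is reduced.

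The substantive implication is $(3)\Rightarrow(2)$, and here is where the real work lies. Suppose $Z\defeq X_1\times_X\cdots\times_X X_n$ has reduced special fibre $Z_k$. I want to conclude $Z$ is normal, hence $Z=\widetilde X$. The strategy is to check normality at $\mathcal{O}_\eta$: localize at the generic point $\eta$ of $X_k$, where $\mathcal{O}_\eta$ is a discrete valuation ring (here I use that $X$ is normal with integral special fibre, and in the $\chr(K)=0$ case the local factoriality hypothesis — or more precisely that $X$ is normal suffices for $\mathcal O_\eta$ to be a DVR; one reduces to the DVR case exactly as in Theorem 3.2). Over $\mathcal{O}_\eta$ each $f_i$ becomes a torsor $\Spec B_i \to \Spec \mathcal{O}_\eta$ under $G_i$, and by hypothesis the ramification indices are all $1$ (this is implicit in the setup: each $f_i$ already extends to a torsor with $X_i$ normal, so Theorem 3.2 applies and $\mathcal{O}_\eta \to B_i$ is unramified in the sense of ramification index one — $\pi$ stays a uniformiser). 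The key point: the fibre product $B_1\otimes_{\mathcal O_\eta}\cdots\otimes_{\mathcal O_\eta} B_n$ has reduced special fibre by assumption $(3)$ localized at $\eta$, and I claim that for a finite extension of the complete (after completing) DVR $\mathcal{O}_\eta$, reducedness of the special fibre together with ramification index one forces the ring to be a product of DVRs, hence normal; this is precisely the content of the results cited in the excerpt for the boundary/DVR case ([Sa\"\i di1] Proposition 2.3 and the background material, or [Sa\"\i di4] Proposition 2.3.1). Then normality at $\eta$ plus Serre's criterion: $Z$ is finite flat over the normal scheme $X$, so $Z$ satisfies $(S_2)$ automatically away from codimension issues once one knows $Z$ is $(R_1)$; the only codimension-one points of $Z$ where normality could fail are those lying over $\eta$ (the codimension-one points of $Z_k$ map to generic points of components of $X_k$, but $X_k$ is integral so there is only $\eta$) and those over codimension-one points of $X_K$, where $Z_K = \widetilde X_K$ is already normal (smooth, even, being an \'etale torsor over the normal $X_K$). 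So reducedness of $Z_k$ at $\eta$ gives $(R_1)$ everywhere, hence $Z$ is normal, hence $Z=\widetilde X$, which is $(2)$.

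The main obstacle I anticipate is the local claim at $\eta$: showing that a finite flat extension of a DVR with ramification index one and reduced special fibre is normal (a product of DVRs). In relative dimension one this is standard, but I must make sure the argument genuinely reduces to the DVR case and that the "ramification index one" hypothesis propagates to the fibre product — this is why the hypothesis appears in the setup via "$X_i$ normal" and implicitly via Theorem 3.2 applied to each $f_i$, but the subtle part is that the ramification index of $\mathcal O_\eta$ in the compositum $B_1\otimes\cdots\otimes B_n$ need not obviously be one a priori; one deduces it from reducedness of $Z_k$ (if $\pi$ were not a uniformiser in some factor, the special fibre would be non-reduced). I would handle this by the explicit local analysis of torsors under the group schemes $\mu_p$, $\mathcal H_n$, $\mathcal H_{v_K(\lambda)}$ (and $\alpha_p$, $\mathbb Z/p\mathbb Z$ in equal characteristic) recalled in the Background section, exactly as in the proof of Theorem 1.1, together with Serre's normality criterion $(R_1)+(S_2)$ to globalize. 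The case $n=2$ is the heart of the matter and the general $n$ follows by an evident induction (normalize step by step), so I would prove $n=2$ carefully and remark that iteration handles the general case.
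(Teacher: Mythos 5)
Your implication $(3)\Rightarrow(2)$ is sound: localising at the generic point $\eta$ of $X_k$, checking $(R_1)$ there and on the generic fibre, and inheriting $(S_2)$ from finite flatness over the normal $X$ does give normality of the fibre product. The paper obtains the same conclusion in one line by citing [Liu] 4.1.18 (flat over a DVR, normal generic fibre, reduced special fibre $\Rightarrow$ normal) applied directly to $X_1\times_X\cdots\times_X X_n$; your argument is in effect a reproof of that statement, and the detour through ramification indices and the explicit torsor equations from the Background is not needed for this direction. Likewise $(2)\Rightarrow(1)$ is immediate, as you say, since the fibre product is already known to be a torsor under $G_1\times_{\Spec R}\cdots\times_{\Spec R}G_n$.

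The genuine gap is the implication out of $(1)$, which you dispatch with ``by the same uniqueness $\widetilde X = X_1\times_X\cdots\times_X X_n$''. There is no such uniqueness: a finite \'etale $K$-group scheme of rank $p$ has many finite flat $R$-models ($\mu_p$, $\mathbb{Z}/p\mathbb{Z}$, the various $\mathcal{H}_n$), so neither the group scheme $G$ of the torsor $f$ nor a torsor extending $\widetilde X_K\to X_K$ is determined by its generic fibre; and a finite birational morphism between two finite flat $X$-schemes of the same rank which is generically an isomorphism need not be an isomorphism (e.g.\ $\Spec(R\times R)\to \Spec R[x]/(x^2-\pi^2)$). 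This is precisely where the paper does its real work for $(1)\Rightarrow(2)$: after reducing to $n=2$, it decomposes $G$ as the product of the schematic closures $G'_1\times_{\Spec R}G'_2$ of $\widetilde G_1,\widetilde G_2$ (Proposition 3.7), identifies the quotients $\widetilde X/G'_i$ with $X_j$ using reducedness of $\widetilde X_k$ and [Liu] 4.1.18 again, shows the induced homomorphism $\phi:G\to G_1\times_{\Spec R}G_2$ is a closed immersion of group schemes of equal rank (hence an isomorphism) because $\mathrm{Ker}(\phi)=G'_1\cap G'_2=\{1\}$, and finally invokes a rigidity lemma ([Tossici], Lemma 4.1.2): a morphism of torsors over $X$ under identified group schemes which is generically an isomorphism is an isomorphism. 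Without some version of these steps your chain $(2)\Rightarrow(1)\Rightarrow(3)\Rightarrow(2)$ breaks at $(1)\Rightarrow(3)$, and the parenthetical ``necessarily $G=G_1\times_{\Spec R}\cdots\times_{\Spec R}G_n$'' in statement (1) is also left unjustified, since it rests on the same schematic-closure analysis rather than on any uniqueness of finite flat models.
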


Note that the above condition in Theorem 3.4 that $\widetilde{X}_k$ is reduced is always satisfied after possibly passing to a finite extension $R'/R$ of $R$ (cf. [Epp]).  It implies that the $(X_i)_k$ are reduced, $\forall i\in \{1,\cdots,n\}$. Moreover, Theorem 3.2 and Theorem 3.4 provide a ``complete" answer to Question 1 in the case of Galois covers of type $\left (p,\cdots,p\right )$, i.e., the case where $\mathrm{rank} (G_i)=p,\ \forall i\in \{1,\cdots,n\}$.
 
In the case of (relative) \emph{smooth curves} one can prove the following more precise result when $\mathrm{rank} (G_i)=p,\ \forall i\in \{1,\cdots,n\}$.
.

\begin{thmG}  We use the same notations and assumptions as in Theorem 3.4. Assume further that $X$ is a (relative) \textbf{smooth $R$-curve}, 
$n\ge 2$, and $\mathrm {rank} (G_i)=p$ for $1\le i\le n$. Then the three (equivalent) conditions in Theorem 3.4 are equivalent to the following.

4. \textbf{At least} $\textbf{n-1}$ of the finite flat $R$-group schemes $G_{i}$ acting on $f_i:X_i \rightarrow X$ are \textbf{ \'{e}tale}, for $i \in \{1,\cdots,n\}$.
\end{thmG}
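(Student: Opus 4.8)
The strategy is to show that statement~4 is equivalent to the three conditions of Theorem~3.4 by proving two implications, using throughout the characterisation furnished by Theorem~3.4 that the normalisation $\widetilde X$ equals $W\defeq X_{1} \times_X X_{2} \times_X \cdots \times_X X_n$ (equivalently, that $f$ has a torsor structure) precisely when the special fibre $W_k$ is reduced.

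\emph{Proof that 4 implies the conditions of Theorem~3.4.} After relabelling the indices we may assume that $G_2,\dots,G_n$ are étale, so that $X_2 \times_X \cdots \times_X X_n \to X$ is étale, and hence so is its base change $W = X_1 \times_X \bigl( X_2 \times_X \cdots \times_X X_n\bigr) \to X_1$. As $X_1$ is normal and étale morphisms preserve normality, $W$ is normal, so $W=\widetilde X$, which is condition~2. This implication uses neither that $X$ is a curve nor the hypothesis on the special fibre.

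\emph{Proof of the converse, by contraposition.} Suppose at least two of the $G_i$, say $G_1$ and $G_2$, are not étale; I claim $W_k$ is non-reduced. Let $\eta$ be the generic point of the integral curve $X_k$. Since $X$ is a smooth $R$-curve, $\kappa(\eta)$ is the function field of a curve over the perfect field $k$, so $[\kappa(\eta):\kappa(\eta)^p]=p$; in particular $\kappa(\eta)$ admits a \emph{unique} purely inseparable extension of degree $p$, namely $\kappa(\eta)^{1/p}$. For each $i$, the torsor $f_i$ induces over $\eta$ a torsor $(X_i)_\eta\defeq X_i\times_X\Spec\kappa(\eta)$ under $G_{i,k}\times_k\kappa(\eta)$; this is a $\kappa(\eta)$-algebra of dimension $p$. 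Because $\widetilde X_k$ is reduced, each $(X_i)_k$ is reduced (as noted after Theorem~3.4), hence so is its localisation $(X_i)_\eta$. Now $G_1,G_2$ non-étale forces $G_{1,k},G_{2,k}$ to be isomorphic to $\mu_p$ or $\alpha_p$, and a torsor under $\mu_p$ or $\alpha_p$ over a field of characteristic $p$ is either trivial, in which case it is non-reduced (isomorphic to $F[w]/(w^p)$), or non-trivial, in which case it is a purely inseparable field extension of degree $p$. Reducedness of $(X_1)_\eta$ and $(X_2)_\eta$ therefore forces both to be isomorphic to $\kappa(\eta)^{1/p}$ over $\kappa(\eta)$.

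It then remains to compute the fibre of $W$ over $\eta$, namely
\[
W\times_X\Spec\kappa(\eta)=(X_1)_\eta\otimes_{\kappa(\eta)}(X_2)_\eta\otimes_{\kappa(\eta)}\cdots\otimes_{\kappa(\eta)}(X_n)_\eta ,
\]
and to observe that already $(X_1)_\eta\otimes_{\kappa(\eta)}(X_2)_\eta\cong\kappa(\eta)^{1/p}\otimes_{\kappa(\eta)}\kappa(\eta)^{1/p}\cong\kappa(\eta)^{1/p}[w]/(w^p)$ is non-reduced, and that tensoring with the remaining non-zero $\kappa(\eta)$-algebras preserves non-reducedness. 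Since $W$ is finite flat over $X$ (being a fibre product of the finite flat torsors $f_i$) and $\kappa(\eta)=\mathrm{Frac}(\mathcal O_{X_k})$, this fibre is a localisation of $W_k$; hence $W_k$ is non-reduced, and by Theorem~3.4 the cover $f$ is not a torsor. Combining the two implications with the equivalence of 1, 2 and 3 in Theorem~3.4 gives the asserted equivalence of all four statements. The one delicate point is the bookkeeping on special fibres---that $\widetilde X_k$ reduced forces each $(X_i)_k$ reduced and that each $G_{i,k}$ is one of $\mathbb Z/p\mathbb Z$, $\mu_p$, $\alpha_p$---together with the elementary but crucial fact that the trivial $\mu_p$- and $\alpha_p$-torsors over a field of characteristic $p$ are non-reduced; granting these, the argument is short and, unlike Theorem~1.1, needs no explicit conductor computations.
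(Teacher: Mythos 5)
Your proof is correct, but the hard direction is argued by a genuinely different route than the paper's. For (4 $\Rightarrow$ 1,2,3) both arguments are essentially the same étale-base-change observation (the paper deduces reducedness of the special fibre of the product from that of $(X_n)_k$ and invokes 3 $\Rightarrow$ 1, while you deduce normality directly; either works). For the converse the paper argues by induction on $n$, and in the base case localises at the \emph{boundary of the formal germ at a closed point}, puts the two non-étale torsors into the explicit normal forms $Z_1^p=f(T)$ and $Z_2^p=1+\pi^{np}T^m$ from the Background section, and checks by hand that $T$ (hence $\overline{f(T)}$) becomes a $p$-th power modulo $\pi$ after the first base change, so the fibre product has non-reduced special fibre. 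You instead localise at the \emph{generic point} $\eta$ of $X_k$ and replace the entire equation-level computation by two soft facts: a reduced torsor under $\mu_p$ or $\alpha_p$ over a field of characteristic $p$ is a purely inseparable extension of degree $p$, and a one-dimensional function field over the perfect field $k$ satisfies $[\kappa(\eta):\kappa(\eta)^p]=p$, so it has only one such extension, forcing $(X_1)_\eta\otimes_{\kappa(\eta)}(X_2)_\eta$ to be non-reduced. This handles all $n$ at once with no induction and no case split according to the degeneration type of $G_{i,k}$, and it isolates exactly where the curve hypothesis enters (in relative dimension $d$ one has $[\kappa(\eta):\kappa(\eta)^p]=p^d$, which is precisely why the counterexample of 3.8 with $T_1^{1/p}$ and $T_2^{1/p}$ exists). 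What the paper's computational proof buys in exchange is consistency with the explicit torsor normal forms used throughout \S1, which it reuses rather than appealing to the abstract classification of $\mu_p$- and $\alpha_p$-torsors over a field. The small facts you flag as delicate (reducedness of $\widetilde X_k$ passing to each $(X_i)_k$, and the list $\mathbb Z/p\mathbb Z$, $\mu_p$, $\alpha_p$ for $G_{i,k}$) are both stated and used in the paper, so nothing is missing.
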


%\begin{remA}
%\begin{enumerate}
\medskip
\noindent
{\bf Remark 3.6.}\  {\bf 1)}\ Theorem 3.4 holds true if $X$ is the formal spectrum of a complete discrete valuation ring 
(cf. the details of the proof of Theorem 3.4 below which applies as it is in this case).

\noindent
{\bf 2)}\ In $\S3$ we provide examples showing that Theorem 3.5 \emph{doesn't hold} in relative dimension $>1$.

\section*{Proof of Theorem 3.4}

Next, we prove Theorem 3.4. We start by the following.

\begin{proB}
\emph{Let $G$ be a finite and flat commutative $R$-group scheme whose generic fibre is a product of group schemes of 
the form
$ G_K=\widetilde G_1 \times_{\Spec K}\widetilde G_2 \cdots  \times_{\Spec K} \widetilde G_n,$
where the $\{\widetilde G_i\}_{i=1}^n$ are finite and flat commutative $K$-group schemes. Then $G$ is a product of 
finite and flat commutative $R$-group schemes  $\{G_i\}_{i=1}^n$, i.e.,
$ G = G_1 \times_{\Spec R}  G_2 \times_{\Spec R}\cdots \times_{\Spec R} G_n,$
with $(G_{i})_K= \widetilde G_i$.}
\end{proB}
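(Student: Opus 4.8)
The plan is to reduce the statement to a question about group schemes over the base and then invoke the structure of the category of finite flat commutative group schemes. First I would set up the decomposition on the generic fibre: we are given idempotent-like projections, namely the natural closed immersions $\widetilde G_i \hookrightarrow G_K$ and the projections $G_K \twoheadrightarrow \widetilde G_i$ whose composites $\widetilde G_i \to G_K \to \widetilde G_j$ are the identity for $i=j$ and trivial for $i\neq j$. Equivalently, on the Cartier dual (or just on the affine algebras) the Hopf algebra $\mathcal O(G_K)$ is the tensor product $\bigotimes_{i} \mathcal O(\widetilde G_i)$. The goal is to show these morphisms extend over $\Spec R$ and that the extended maps still exhibit $G$ as a product.

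The key steps, in order: (1) Let $G_i$ be the scheme-theoretic closure of $\widetilde G_i$ inside $G$ (take the schematic image of the morphism $\widetilde G_i = \widetilde G_i \times_{\Spec K}\{e\}\times\cdots \hookrightarrow G_K \to G$, or equivalently the flat/$R$-torsion-free quotient of $\mathcal O(G)$ through which $\mathcal O(G) \to \mathcal O(\widetilde G_i)$ factors). Since $G$ is flat over $R$ and $R$ is a DVR, $G_i$ is flat over $R$, finite over $R$ (as a closed subscheme of the finite $R$-scheme $G$), and it is a subgroup scheme because the closure of a subgroup scheme under the group operations is again a subgroup scheme over a reduced base; its generic fibre is $\widetilde G_i$ by construction. (2) Produce the multiplication map $m:\ G_1 \times_{\Spec R}\cdots\times_{\Spec R} G_n \to G$ as the restriction of the $n$-fold group law of $G$; on generic fibres this is the given isomorphism. (3) Show $m$ is an isomorphism. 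Both source and target are finite flat $R$-schemes of the same rank (the rank of $G$ equals $\prod \mathrm{rank}(\widetilde G_i)$ since rank is locally constant and can be read on the generic fibre), $m_K$ is an isomorphism, and both are $R$-flat hence $R$-torsion-free; so $m$ is injective on structure sheaves, and a rank count plus flatness forces it to be an isomorphism (a finite injective morphism of finite flat $R$-algebras of the same rank, which is an isomorphism after $\otimes_R K$, is an isomorphism). (4) Transport the group structure: since $m$ is an isomorphism of $R$-schemes and an isomorphism of group schemes generically, and $G$ is separated over the reduced irreducible-ish base with $R$-flat (hence $K$-dense) generic fibre, $m$ is automatically a homomorphism — two morphisms of $R$-flat separated group schemes agreeing on the schematically dense generic fibre agree. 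Hence $G \cong G_1 \times_R \cdots \times_R G_n$ as group schemes, and $(G_i)_K = \widetilde G_i$.

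The main obstacle is step (1)–(4) hinge on the single technical point that \emph{schematic closure in an $R$-flat finite group scheme is again an $R$-flat finite subgroup scheme}, and that the product of these closures maps isomorphically onto $G$. The first part is standard (closure of a subgroup scheme over an integral — here, a DVR — base is a subgroup scheme, cf. the flatness criterion over a DVR: $R$-flat $=$ $R$-torsion-free), but one must be slightly careful that the various projections $G \to G_i$ are compatible, i.e.\ that the composite $G \xrightarrow{\sim} G_1\times\cdots\times G_n \to G_i$ recovers the closure — this is where one uses that the coordinate projections on $G_K$ extend, which follows because $\mathcal O(G_i)$ is an $R$-flat quotient of $\mathcal O(G)$ and $\mathcal O(G)\otimes_R K = \bigotimes_i \mathcal O(\widetilde G_i)$. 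The rank/flatness bookkeeping in step (3) is routine but is the place where the DVR hypothesis is genuinely used (it guarantees $R$-flat $\iff$ torsion-free, so no embedded-component pathologies). I would present steps (1)–(2) in a sentence or two, spend the bulk of the argument on (3), and dispatch (4) by citing the density-of-generic-fibre principle for morphisms of flat separated $R$-schemes.
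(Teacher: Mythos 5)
Your steps (1) and (2) coincide with the paper's: you form the schematic closures $G_i$ of the $\widetilde G_i$ inside $G$ (the paper cites Raynaud, 2.1 for the fact that these are finite flat closed subgroup schemes) and you then want to identify $G$ with $G_1\times_{\Spec R}\cdots\times_{\Spec R}G_n$. The gap is in step (3). The lemma you rest it on --- ``a finite injective morphism of finite flat $R$-algebras of the same rank, which is an isomorphism after $\otimes_R K$, is an isomorphism'' --- is false, and it fails exactly in the situations this paper is concerned with. Take $R=\mathbb{Z}_p[\zeta_p]$ and the homomorphism $\mathbb{Z}/p\mathbb{Z}\to\mu_p$ sending $1$ to $\zeta_p$: on coordinate rings this is the injection $R[Z]/(Z^p-1)\hookrightarrow R^p$, $Z\mapsto(\zeta_p^i)_i$, between finite flat $R$-algebras of rank $p$ which becomes an isomorphism over $K$, yet is not surjective (recovering the idempotents of $R^p$ requires dividing by $p$). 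Even more simply, $\{(a,b)\in R\times R:\ a\equiv b\ (\mathrm{mod}\ \pi)\}\subset R\times R$ is a proper inclusion of finite flat rank-$2$ $R$-algebras that is generically an isomorphism. The point is that injectivity of the comorphism of $m$ (which does follow from torsion-freeness of $\mathcal O(G)$ and generic injectivity) together with equality of ranks tells you nothing about the cokernel, which is a torsion $R$-module and can be nonzero. The rank argument is only valid in the opposite direction: a \emph{surjection} of finite flat $R$-modules of the same rank is an isomorphism, because its kernel is torsion-free of generic rank zero.

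Consequently, to conclude that your multiplication map $m\colon G_1\times_{\Spec R}\cdots\times_{\Spec R}G_n\to G$ is an isomorphism you must first show it is a closed immersion, i.e.\ that its comorphism is surjective; only then does the rank count close the argument. That surjectivity is the genuinely group-theoretic content, and it is where the paper's proof takes a different route: for $n=2$ it forms the quotients $G/G_1$ and $G/G_2$ (finite and flat, again by Raynaud's theory of schematic closures), shows that the composite $G_2\to G\to G/G_1$ is a closed immersion because its kernel $G_1\cap G_2$ is trivial, and then applies the (correct, surjective-direction) rank argument to the resulting surjection of Hopf algebras to get $G_2\simeq G/G_1$ and $G_1\simeq G/G_2$, whence $G=G_1\times_{\Spec R}G_2$. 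Your step (4) (a scheme isomorphism agreeing generically with a homomorphism is a homomorphism, by flatness and separatedness) is fine, and your devissage to $n=2$ is also what the paper does; but as written, step (3) does not establish surjectivity of the comorphism, and the examples above show that no amount of flatness-and-rank bookkeeping alone can supply it.
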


\begin{proof} First, we treat the case $n=2$. Thus, we have $G_K=\widetilde G_1 \times_{\Spec K} \widetilde G_2$ and need 
to show $G = G_1 \times_{\Spec R} G_2$ where $(G_{i})_K= \widetilde G_i$, for $i=1,2$. 
Let $G_i$ be the \emph{ schematic closure} of $\widetilde G_{i}$ in $G$, for $i=1,2$ (cf. [Raynaud], 2.1). Therefore, $G_1$ and $G_2$ are 
closed subgroup schemes of $G$ which are finite and flat over $\Spec R$ (cf. loc. cit.). 
We have a short exact sequence
$$ 1  \rightarrow  G_1  \rightarrow  G  \rightarrow  G/G_1  \rightarrow 1 ,$$
and likewise
$$ 1  \rightarrow  G_2  \rightarrow  G  \rightarrow  G/G_2  \rightarrow 1 ,$$
of finite and flat commutative $R$-group schemes (cf. loc. cit.).
It remains for the proof to show that the composite homomorphism $G_2 \rightarrow G \rightarrow G/G_1$ is 
an isomorphism. The morphism $G \rightarrow G/G_1$ is finite. The morphism $G_2 \rightarrow G$ is a closed 
immersion, hence finite. The composite $G_2 \rightarrow G/G_1$ of the above morphisms is then finite. 
We will show it is an isomorphism. The morphism $G_2 \rightarrow G/G_1$ 
is a closed immersion since its kernel is trivial. Indeed, on the generic fibre the kernel is trivial: 
$\left(G_1 \cap G_2\right)_K = \widetilde{G_1} \cap \widetilde{G_2}  = \{1\}$. The map $G_2 \rightarrow G/G_1$ is then an 
isomorphism as both group schemes have the same rank. Similarly, the morphism $G_1 \rightarrow G/G_2$ is an isomorphism. 
Therefore, $G=G_1 \times_{\Spec R} G_2$ as required. Now an easy devissage argument along the above lines of thought, using 
induction on $n$, reduces immediately to the above case $n=2$.
\end{proof}

\emph{Proof of Theorem 3.4}

\begin{proof}
(1 $\Rightarrow$ 2)
Assume that $f:\widetilde{X}\rightarrow X$ is a torsor under a finite and flat $R$-group scheme $G$.
In particular, $G_K=\widetilde G$ and $G$ is necessarily commutative.
We will show that $\widetilde{X}= X_{1} \times_X X_{2} \times_X  ... \times_X X_{n}$, i.e., show that 
$X_{1} \times_X X_{2} \times_X  ... \times_X X_{n}$ is normal (this will imply that  $G=G_1 \times_{\Spec R}\cdots \times_{\Spec R} G_n$
necessarily, as $G_1 \times_{\Spec R} ... \times_{\Spec R} G_n$ is the group scheme of the torsor $\tilde f:X_{1} \times_X X_{2} \times_X\cdots 
\times_X X_{n}\rightarrow X$). One reduces easily by a devissage argument to the case $n=2$ 
which we will treat below.

Assume $n=2$. We have the following commutative diagrams of torsors

\begin{equation*}
 \xymatrix{
& \widetilde{X}_K  \ar@{->}[dl]_{\widetilde{G}_{2}} \ar@{->}[dr]^{\widetilde{G}_{1}}  \\
X_{1,K} \ar@{->}[dr]_{\widetilde{G}_{1}}  &&  X_{2,K}  \ar@{->}[dl]^{\widetilde{G}_{2}}   \\
& X_K 
}
\end{equation*}

and

\begin{equation*}
 \xymatrix{
& \widetilde{X} \ar@{->}[d]   \ar@/^2pc/[ddr]^{G'_1}  \ar@/_2pc/[ddl]_{G'_2}   \\
& X_{1} \times_X X_{2}  \ar@{->}[dl]_{G_2  \ \ } \ar@{->}[dr]^{ \ \ G_1}  \\
X_{1} \ar@{->}[ddr]_{G_1}  &&  X_2  \ar@{->}[ddl]^{G_2}   \\
\\
& X 
}
\end{equation*}
where $\widetilde X\rightarrow X_i$ is a torsor under a finite and flat $R$-group scheme $G_j'$, for $j\neq i$.
Moreover, $G'_1 = \left( \widetilde G_{1} \right)^{\text{schematic closure}}$, and $G'_2 = \left( \widetilde G_{2} 
\right)^{\text{schematic closure}}$ (where the schematic closure is taken inside $G$) holds necessarily, so that $G=G'_1 \times_{\Spec R} G'_2$ (cf. Proposition 1.1).
Note that $\widetilde X/{G'_1}=X_2$ must hold as the quotient $\widetilde X/{G'_1}$ is normal: since  
$\left (\widetilde X/{G'_1}\right )_k$ is reduced (as $\widetilde X_k$ is reduced and $\widetilde X$ dominates $\widetilde X/{G'_1}$),
and $\left (\widetilde X/{G'_1}\right )_K=X_{2,K}$
is normal (cf. [Liu], 4.1.18). Similarly $\widetilde X/{G'_2}=X_1$ holds.
We want to show that $\widetilde{X}= X_{1} \times_X X_{2}$, and we claim that this reduces to showing that the natural morphism $G \to G_1 \times_{\Spec R} G_2$ 
(cf. the map $\phi$ below) is an isomorphism. Indeed, if one has two torsors, in this case $ \widetilde{X}\rightarrow X$
and $X_{1} \times_X X_{2}\rightarrow X$ above the same $X$, under isomorphic group schemes, which are isomorphic on the generic fibres, and if we have a morphism 
$\widetilde{X} \rightarrow X_{1} \times_X X_{2}$ which is compatible with the torsor structure and the given identification of group schemes (cf. above diagrams and the definition of $\phi$ below), 
then this morphism must be an isomorphism. (This is a consequence of Lemma 4.1.2 in
[Tossici]. In [Tossici] $\chr(K)=0$ is assumed, the same proof however applies if $\chr(K)=p$.) 
We have two short exact sequences of finite and flat commutative $R$-group schemes
(cf. above diagrams and discussion for the equalities  $G_1=G/G'_2$ and  $G_2=G/G'_1$)
$$ 1  \rightarrow G'_2  \rightarrow G  \rightarrow G_1 = G/G'_2  \rightarrow 1 ,$$
and 
$$ 1  \rightarrow G'_1  \rightarrow G  \rightarrow G_2 = G/G'_1  \rightarrow 1 .$$
The morphisms $G \rightarrow G_1$, and $G \rightarrow G_2$, are finite. Consider the following exact sequence

$$ 1  \rightarrow \mathrm{Ker}(\phi) \rightarrow G \rightarrow G_1 \times_{\Spec R} G_2 ,$$
where $\phi:G   \rightarrow G_1 \times_{\Spec R} G_2$ is the morphism induced by the above morphisms.
We want to show that the map $\phi:G \rightarrow G_1 \times_{\Spec R} G_2$ is an isomorphism. We have $\mathrm{Ker}(\phi)=G'_1 \cap G'_2$ 
by construction. However, $G'_1 \cap G'_2 = \{ 1 \}$ since $G=G'_1 \times _{\Spec R}G'_2$ by Proposition 1.1, and therefore 
$\mathrm{Ker}(\phi) = \{ 1 \}$ which means $\phi:G \rightarrow G_1 \times_{\Spec R} G_2$  is a closed immersion. Finally, $G$ and 
$G_1 \times_{\Spec R} G_2$ have the same rank as group schemes which implies $\phi$ is an isomorphism, as required. 

(2 $\Rightarrow$ 3) Clear.

(3 $\Rightarrow$ 1) By assumption $\left(X_{1} \times_X X_{2} \times_X  ... \times_X X_{n} \right)_k$ is reduced. 
Moreover, we have 

\noindent
$\left( X_{1} \times_X X_{2} \times_X  ... \times_X X_{n} \right)_K=\widetilde X_K$ is normal. Hence 
$X_{1} \times_X X_{2} \times_X  ... \times_X X_{n}$ is normal (cf. [Liu], 4.1.18),
and $\widetilde{X} = X_{1} \times_X X_{2} \times_X  ... \times_X X_{n}$. We know 
that $\tilde f:X_{1} \times_X X_{2} \times_X  ... \times_X X_{n}\rightarrow X$ 
is a torsor under the group scheme 
$G_1 \times_{\Spec R} G_2 \times_{\Spec R} .... \times_{\Spec R} G_n$, 
so $f:\widetilde{X}\rightarrow X$ is a torsor under the same group scheme.
\end{proof}

\section*{Proof of Theorem 3.5}
Next, we prove Theorem 3.5.

\begin{proof}

(1 $\Rightarrow$ 4) Suppose that $\tilde f:\widetilde{X}\rightarrow X$ is a torsor under a finite and flat $R$-group scheme $G$; 
in which case $\widetilde{X} = X_{1} \times_X X_{2}\times_X  ... \times_X X_{n}$ and $G=G_1 \times_{\Spec R} \cdots \times_{\Spec R} G_n$ 
(cf. Theorem 3.4). We will show that \emph{at least} ${n-1}$ of the finite 
flat $R$-group schemes $G_{i}$ (acting on $f_i:X_i \rightarrow X$) are \'{e}tale, for $i\in \{1,\cdots,n\}$.
We argue by induction on the rank of $G$.

\emph{ Base case:} The base case pertains to  $\mathrm{rank} (G)=p^2$ and $n=2$. Thus, $\mathrm{rank} (G_1)=\mathrm{rank} (G_2)=p$.
We assume $\widetilde{X} = X_{1} \times_X X_{2}$ and prove that at 
least one of the two group schemes $G_1$ or $G_2$ is \'{e}tale. We assume that $X$ is a scheme, and not a formal scheme, in which case the argument of proof is the same.

Let $x\in X$ be a \emph{closed} point of $X$ and $\mathcal{X}$ the \emph{boundary of the formal germ} of $X$ at $x$, so $\mathcal{X}$  is isomorphic to 
$\mathrm{Spec} \left( R[[T]] \{ T^{-1} \} \right)$ (cf. Background). We have a natural morphism $\mathcal{X}\rightarrow X$ of schemes.
Write $\mathcal{X}_1\defeq \mathcal{X}\times _XX_1$, $\mathcal{X}_2\defeq \mathcal{X}\times _XX_2$, and $\widetilde {\mathcal{X}}\defeq 
\mathcal{X}\times _X\widetilde X$. Thus, by base change, $\widetilde {\mathcal{X}}\rightarrow \mathcal{X}$ (resp. $\mathcal{X}_1\rightarrow \mathcal{X}$, and 
$\mathcal{X}_2\rightarrow \mathcal{X}$) is a torsor under the group scheme $G$ (resp. under $G_1$, and $G_2$) and we have the following commutative diagram 

\begin{equation*}
 \xymatrix{
& \widetilde{\mathcal{X}} = \mathcal{X}_{1} \times_\mathcal{X} \mathcal{X}_{2}  \ar@{->}[dl]_{G_{2}} \ar@{->}[dr]^{G_{1}}  \\
\mathcal{X}_{1} \ar@{->}[dr]_{G_{1}}  &&  \mathcal{X}_2  \ar@{->}[dl]^{G_{2}}   \\
& \mathcal{X}
}
\end{equation*}
Note that 
$\widetilde {\mathcal{X}}$ is normal as $(\widetilde {\mathcal{X}})_k$ is reduced (recall $(\widetilde X)_k$ 
is reduced) and $(\widetilde {\mathcal{X}})_K$ 
is normal (cf. [Liu], 4.1.18), hence $\widetilde {\mathcal{X}}=\mathcal{X}_1\times _{\mathcal{X}}\mathcal{X}_2$ holds 
(cf. Theorem 3.4 and Remarks 3.6, 1).

Assume now that $G_{1}$ and $G_{2}$ are both \emph{non-\'{e}tale} $R$-group schemes. Then we prove that  
$\widetilde {\mathcal{X}}\rightarrow \mathcal{X}$ can not have the structure of a torsor under a finite and flat $R$-group scheme which 
would then be a contradiction. More precisely, we will prove that $\mathcal{X}_1\times _{\mathcal{X}}\mathcal{X}_2$ can not be normal in this case, 
hence the above conclusion (cf. Theorem 3.4).

We will assume for simplicity that $\chr(K)=0$ and $K$ contains a primitive $p$-th root of $1$.
A similar argument as the one used below holds in equal characteristic $p>0$.
First, $\widetilde {\mathcal{X}}$ is connected as $\widetilde X_k$ is \emph{unibranch} (the finite morphism $\widetilde X_k\to X_k$ is radicial).
As the group schemes $G_1$ and $G_2$ are non  \'{e}tale, their special fibres $(G_{1})_k$ and $(G_{2})_k$ are radicial isomorphic to either 
$\mu_p$ or $\alpha_p$. We treat the case  $(G_{1})_k$ is isomorphic to $\mu_p\defeq \mu_{p,k}$ and $(G_{2})_k$ is isomorphic to $\alpha_p\defeq \alpha_{p,k}$; 
the remaining cases are treated similarly. 
Recall $\mathcal{X}$ is isomorphic to $\mathrm{Spec} \left(R[[T]] \{ T^{-1} \} \right)$. For a suitable choice of the parameter $T$ 
the torsor $\mathcal{X}_2\rightarrow \mathcal{X}$ is given by an equation $Z_2^p=1+\pi^{np}T^m$
where $n$ is a positive integer (satisfying a certain condition) and $m\in \mathbb{Z}$ (cf. Background. Also see Proposition 2.3.1 in [Sa\"\i di4] for the equal characteristic case),
and the torsor $\mathcal{X}_1\rightarrow \mathcal{X}$ is given by an equation $Z_1^p=f(T)$ where $f(T)\in R[[T]] \{ T^{-1} \}$ is a unit whose 
reduction $\overline {f(T)}$ modulo $\pi$ is not a $p$-power (cf. loc. cit.). We claim that 
$\widetilde{\mathcal{X}} = \mathcal{X}_{1} \times_{\mathcal{X}} {\mathcal{X}}_{2}$ can not hold. Indeed, by base change ${\mathcal{X}}_{1} \times_{\mathcal{X}} \mathcal{X}_{2}\rightarrow 
\mathcal{X}_2$ is a $G_1$-torsor 
which is generically given by an equation  $Z^p=f(T)$, where $f(T)$ is viewed as a function on $\mathcal{X}_2$. 
But in $\mathcal{X}_2$ the function $T$ becomes a $p$-power modulo $\pi$
as one easily deduces from the equation $Z_2^p=1+\pi^{np}T^m$ defining the torsor $\mathcal{X}_2\rightarrow \mathcal{X}$. 
Indeed, after a change of variables we can write the above equation as $(1+\pi^nZ_2')^p=1+\pi^{np}T^m$ which reduces, after an easy computation, to an equation $z_2'^p=t^m$
hence $({(z_2')}^{\frac {1}{m}})^p=t$. In particular, the   
reduction $\overline {f(T)}$ modulo $\pi$ of $f(T)$, viewed as a function on $\left (\mathcal{X}_2\right )_k$, is a $p$-power. This means that 
$({\mathcal{X}}_{1} \times_{\mathcal{X}} \mathcal{X}_{2})_k$ is not reduced and
$\widetilde {\mathcal{X}}\rightarrow \mathcal{X}_2$ can not be a $G_1\simeq \mu_{p,R}$-torsor (cf. the proof of Proposition 2.3 in [Sa\"\i di1]), and a fortiori
$\widetilde{\mathcal{X}} \neq {\mathcal{X}}_{1} \times_{\mathcal{X}} \mathcal{X}_{2}$.

\emph{ Inductive hypothesis: } Given $G$, we assume that the (1 $\Rightarrow$ 4) part in Theorem 3.5 holds true for smaller values of $n \ge 2$. 
%and cases where $ \mathrm{rank}(G_1)+\cdots+ \mathrm{rank} (G_n)< \mathrm{rank} (G)$. Write $\widetilde{X}_1 \defeq X_{1} \times_X X_{2} \times_X  ... \times_X X_{n-1}$. 
Then $\widetilde{X}_1 \defeq X_{1} \times_X X_{2} \times_X  ... \times_X X_{n-1}$ 
is normal (since its special fibre is reduced (as it is dominated by $\widetilde{X}$ whose special fibre is reduced)
and its generic fibre is normal (cf. [Liu], 4.1.18)), hence at least $n-2$ of the 
corresponding $G_{i}$'s, for $i\in \{1,\cdots,n-1\}$, are \'{e}tale by the induction hypothesis. We will assume, without loss of generality, 
that $G_{i}$ is \'etale for $1 \leq i \leq n-2$.

\emph{ Inductive step: } We have the following picture for our inductive step (the case for $n$):

\begin{equation*}
 \xymatrix{
&&& \widetilde{X}\\
&&  \widetilde{X}_1  \ar@{<-}[ur]  && \widetilde{X}_2  \ar@{<-}[ul]  \\
\ \ \ X_{1} \ \ \ \ar@{<-}[urr]  & \ \ \  X_{2} \ \ \  \ar@{<-}[ur]  & \ \ \ ... \ \ \ & \ \ \ X_{n-2} \ \ \ \ar@{<-}[ul]  & \ \ \ X_{n-1} \ \ \ \ar@{<-}[ull] \ar@{<-}[u] & \ \ \ X_n \ \ \  \ar@{<-}[ul] \\
\\
&&& X   \ar@{<-}[uulll]^{\text{\'etale}}  \ar@{<-}[uull]_{\text{\'etale}} \ar@{<-}[uu]^{\text{\'etale}}  \ar@{<-}[uur]^{G_{n-1}}   \ar@{<-}[uurr]_{G_{n}}
}
\end{equation*}

We argue by contradiction. Suppose that neither $G_{n-1}$ nor $G_{n}$ is \'{e}tale.  This would mean that $\widetilde{X}_2\rightarrow X$, where $\widetilde{X}_2$ is
the normalisation of $X$ in $(X_{n-1})_K \times_{X_K} (X_{n})_K$, does not have the structure of a torsor (as this would contradict the induction hypothesis).
This implies that $\widetilde{X}\rightarrow X$ does not have the structure of a torsor since it factorises 
$\widetilde{X}\rightarrow \widetilde{X}_2\rightarrow X$, for otherwise $\widetilde{X}_2\rightarrow X$ being a quotient of 
$\widetilde{X}\rightarrow X$ would be a torsor. 
Of course, $\widetilde{X}\rightarrow X$ is a torsor to start with by assumption and so this is a contradiction. 
Therefore, at least one of $G_{n-1}$ and $G_{n}$ is \'{e}tale, as required.

(1 $\Leftarrow$ 4)  Suppose that at least $n-1$ of the $G_{i}$ are \'{e}tale, say: $G_1,G_2,\cdots,G_{n-1}$ are \'etale. Write 
$\widetilde X_1\defeq  X_{1} \times_X X_{2}\times_X  ... \times_X X_{n-1}$. Then $\widetilde X_1\rightarrow X$ is a torsor under the 
finite {\it \'etale} $R$-group scheme 
$G_1'\defeq G_1\times _{\Spec R}G_2\times_{\Spec R}\cdots\times _{\Spec R}G_{n-1}$. Moreover, $X_{1} \times_X X_{2}\times_X  ... \times_X X_{n}
=\widetilde X_1\times_XX_n$, 
and $X_{1} \times_X X_{2}\times_X  ... \times_X X_{n}\rightarrow X_{n}$ is an \'etale torsor under the group scheme $G'_1$ (by base change). 
In particular, $\left (X_{1} \times_X X_{2}\times_X  ... 
\times_X X_{n}\right )_k$ is reduced as $(X_{n})_k$ is reduced.
Indeed, $\widetilde X$ dominates $X_n$ and ${\widetilde X}_k$ is reduced. 
Hence $\widetilde X=X_{1} \times_X X_{2}\times_X  ... \times_X X_{n}$ (cf. Theorem 3.4) and 
$\widetilde X\rightarrow X$ is a torsor under the group scheme $G\defeq G_1\times _{\Spec R}G_2\times_{\Spec R}\cdots\times _{\Spec R}G_{n}$.
\end{proof}

\section*{3.8. Counterexample to Theorem 3.5 in higher dimensions}

Theorem 3.5 is not valid (under similar assumptions) for (formal) smooth $R$-schemes of relative dimension $\geq 2$. Here is a counterexample.
Assume $\chr(K)=0$ and $K$ contains a primitive $p$-th root of $1$. Let $X= \mathrm{Spf}(A)$ where $A\defeq R<T_1, T_2>$ is the free 
$R$-Tate algebra in the two variables $T_1$ and $T_2$. 
Let $G_1=G_2=\mu_p\defeq \mu_{p,R}$, neither being an \'etale $R$-group scheme. For $i=1,2$, consider the 
$G_i$-torsor $X_i\rightarrow X$ which is generically defined by the equation 
$$Z_i^p=T_i.$$ 
We have the following commutative diagram

\begin{equation*}
 \xymatrix{
&   X_1 \times X_2  \ar@{->}[ddl]^{(Z'_2)^p=T_2}_{\mu_p} \ar@{->}[ddr]_{(Z'_1)^p=T_1}^{\mu_p}  \\
\\
X_1   \ar@{->}[ddr]_{\mu_p}^{Z_1^p=T_1}  && X_2  \ar@{->}[ddl]^{\mu_p}_{Z_2^p=T_2} \\
\\
&  X= \mathrm{Spf} \left( R<T_1, T_2> \right) \\
}
\end{equation*}

The torsor $X_1\times_XX_2\rightarrow X_2$ is a $G_1=\mu_p$-torsor defined generically by the equation 
$$(Z'_1)^p=T_1$$ 
where $T_1$ is viewed as a function on $X_2$.
This function is not a $p$-power modulo $\pi$ as follows easily from the fact that the torsor $X_2\rightarrow X$ is defined 
generically by the equation $Z_2^p=T_2$. In particular, $X_1\times_XX_2\rightarrow X_2$ is a non trivial $\mu_p$-torsor, and
$(X_1\times_XX_2)_k\rightarrow (X_2)_k$ is a non trivial $\mu_{p,k}$-torsor. Hence
$(X_1\times_XX_2)_k$ is necessarily reduced (as $(X_2)_k$ is reduced since $(X_2)_k\to (X_1)_k$ is a non trivial $\mu_{p,k}$-torsor). 
Thus,  $X_1\times_XX_2$ is normal (cf. Theorem 3.4) and 
$X_1\times_XX_2=\widetilde X$, where $\widetilde X$ is the normalisation of $X$ in 
$(X_1\times_XX_2)_K$, which contradicts the statement of Theorem 3.5 in this case.

$$\textbf{References.}$$

\noindent
[Bourbaki] Bourbaki, N. Alg\`ebre Commutative, Chapitre 9, Masson, 1983.

\noindent
[Epp] Epp, H.P., Eliminating wild ramification, Inventiones Mathematicae, Volume 19 (1973), pp. 235-249.

%\noindent
%[GreenMatignon] Green, B \& Matignon, M., Liftings of Galois Covers of Smooth Curves, Compositio Mathematica 113 (3) (1998), pp. 237-272.

\noindent
[Kato] Kato, K. Vanishing cycles, ramification of valuations, and class field theory, Duke Mathematical Journal 55 (3) (1987), pp. 629-659.

\noindent
[Liu] Liu, Q., Algebraic Geometry and Arithmetic Curves, Oxford University Press, 2009.

\noindent
[Raynaud] Raynaud, M., Sch\'emas en groupes de type $(p,...,p)$, Bulletin de la Soci\'et\'e Math\'ematique de France, 
Volume 102, 1974, pp. 241-280.

\noindent
[Sa\"\i di1] Sa\"\i di, M., Wild ramification and a vanishing cycles formula, Journal of Algebra 273 (2004) 108-128.

\noindent
[Sa\"\i di2] Sa\"\i di, M., Torsors under finite and flat group schemes of rank $p$ with Galois action, Mathematische 
Zeitschrift, Volume 245, 2003, pp. 695-710.

\noindent
[Sa\"\i di3] Sa\"\i di, M., On the degeneration of \'etale $\Bbb Z/p\Bbb Z$ and $\Bbb Z/p^2\Bbb Z$-torsors in equal characteristic $p>0$, Hiroshima Math. J. 37 (2007), 315-341.

\noindent
[Sa\"\i di4] Sa\"\i di, M., Galois covers of degree $p$ and semi-stable reduction of curves in equal characteristic $p>0$,
Math. J. Okayama Univ. 49 (2007), 113-138.

\noindent
[Serre] Serre, J.-P., Local fields, Springer Verlag, 1979.

\noindent
[Tossici] Tossici, D., Effective models and extension of torsors over a discrete valuation ring of unequal characteristic, International Mathematics Research Notices, 2008, pp. 1-68.

\bigskip

\noindent
Mohamed Sa\"\i di

\noindent
College of Engineering, Mathematics, and Physical Sciences

\noindent
University of Exeter

\noindent
Harrison Building

\noindent
North Park Road

\noindent
EXETER EX4 4QF

\noindent
United Kingdom

\noindent
M.Saidi@exeter.ac.uk

\bigskip

\noindent
Nicholas Williams

\noindent
London, United Kingdom

\noindent
Nicholas.Williams09@alumni.imperial.ac.uk

\end{document}